\def\titlerunning#1{\gdef\titrun{#1}}
\def\author#1{\gdef\autrun{\def\and{\unskip, }#1}\gdef\@author{#1}}
\def\address#1{{\def\and{\\\hspace*{18pt}}\renewcommand{\thefootnote}{}%
\footnote {#1}}%



\markboth{L. Brandolini, C. Choirat, L. Colzani, G. Gigante, R. Seri, G. Travaglini}{\titrun}}
\def\email#1{e-mail: #1}
\def\subjclass#1{{\renewcommand{\thefootnote}{}%
\footnote{\emph{Mathematics Subject Classification (2010):} #1}}}
\def\keywords#1{\par\medskip
\noindent\textbf{Keywords.} #1}
\newtheorem{thm}{Theorem}[section]
\newtheorem{cor}[thm]{Corollary}
\newtheorem{lem}[thm]{Lemma}
\theoremstyle{definition}
\newtheorem{defin}[thm]{Definition}
\newtheorem{rem}[thm]{Remark}
\begin{document}


\baselineskip=17pt


\titlerunning{Quadrature rules and distribution of points on manifolds}

\title{Quadrature rules and distribution of points on manifolds}

\author{Luca Brandolini
\and
Christine Choirat
\and
Leonardo Colzani
\and
Giacomo Gigante*
\and
Raffaello Seri
\and
Giancarlo Travaglini}

\date{}

\maketitle

\address{L. Brandolini: Dipartimento di Ingegneria dell'Informazione e Metodi Matematici,
Universit\`{a} di Bergamo, Viale Marconi 5, 24044 Dalmine, Bergamo, Italia; \email{luca.brandolini@unibg.it}
\and
C. Choirat: Department of Economics, School of Economics and Business Management, Universidad de Navarra, Edificio de Bibliotecas (Entrada Este), 31080 Pamplona (Spain); \email{cchoirat@unav.es}
\and
L. Colzani: Dipartimento di Matematica e Applicazioni, Edificio U5,
Universit\`{a} di Milano Bicocca,
Via R.Cozzi 53, 20125 Milano, Italia; \email{leonardo.colzani@unimib.it}
\and
G. Gigante (corresponding author): Dipartimento di Ingegneria dell'Informazione e Metodi Matematici,
Universit\`{a} di Bergamo,
Viale Marconi 5, 24044 Dalmine, Bergamo, Italia; \email{giacomo.gigante@unibg.it}
\and
R. Seri: Universit\`{a} degli Studi dell'Insubria,
Dipartimento di Economia,
via Monte Generoso 71, 21100 Varese, Italia; \email{raffaello.seri@uninsubria.it}
\and
G. Travaglini: Dipartimento di Statistica, Edificio U7,
Universit\`{a} di Milano-Bicocca,
Via Bicocca degli Arcimboldi 8, 20126 Milano, Italia; \email{giancarlo.travaglini@unimib.it}}

\subjclass{Primary 41A55; Secondary 11K38, 42C15}


\begin{abstract}
We study the error in quadrature rules on a compact manifold,
\begin{equation*}
\left\vert \sum_{j=1}^{N}\omega _{j}f\left( z_{j}\right) -\int_{\mathcal{M}%
}f(x)dx\right\vert \leq c\mathcal{D}\left\{ z_{j}\right\} \mathcal{V}\left\{
f\right\} .
\end{equation*}
As in the Koksma Hlawka inequality, $\mathcal{D}\left\{ z_{j}\right\} $\ is a sort of
discrepancy of the sampling points and $\mathcal{V}\left\{ f\right\} $\ is a
generalized variation of the function. In particular, we give sharp
quantitative estimates for quadrature rules of functions in Sobolev classes.

\keywords{Quadrature, discrepancy, harmonic analysis}
\end{abstract}

\section{Introduction}
In what follows, $\mathcal{M}$ is a smooth compact $d$ dimensional
Riemannian manifold with Riemannian measure $dx$, normalized so that the
total volume of the manifold is 1, and $\Delta $ is the Laplace Beltrami
operator. This operator is self-adjoint in $\mathbb{L}^{2}(\mathcal{M})$, it
has a sequence of eigenvalues $\left\{ \lambda ^{2}\right\} $ and an
orthonormal complete system of eigenfunctions $\left\{ \varphi _{\lambda
}(x)\right\} $, $\Delta \varphi _{\lambda }(x)=\lambda ^{2}\varphi _{\lambda
}(x)$. The eigenvalues, possibly repeated, can be ordered with increasing
modulus. In particular, the first eigenvalue is 0 and the associated
eigenfunction is 1. An example is the torus $\mathbb{T}^{d}=\mathbb{R}^{d}/%
\mathbb{Z}^{d}$ with the Laplace operator $-\sum \partial ^{2}/\partial
x_{j}^{2}$, eigenvalues $\left\{ 4\pi ^{2}\left\vert k\right\vert
^{2}\right\} _{k\in \mathbb{Z}^{d}}$ and eigenfunctions $\left\{ \exp \left(
2\pi ikx\right) \right\} _{k\in \mathbb{Z}^{d}}$. Another example is the
sphere $\mathbb{S}^{d}=\left\{ x\in \mathbb{R}^{d+1},\;\left\vert
x\right\vert =1\right\} $ with normalized surface measure and with the
angular component of the Laplacian in the space $\mathbb{R}^{d+1}$,
eigenvalues $\left\{ n(n+d-1)\right\} _{n=0}^{+\infty }$ and eigenfunctions
the restriction to the sphere of homogeneous harmonic polynomials in space.

A classical problem is to approximate an integral $\int_{\mathcal{M}}f(x)dx$
with Riemann sums $N^{-1}\sum_{j=1}^{N}f\left( z_{j}\right) $, or weighted
analogues $\sum_{j=1}^{N}\omega _{j}f\left( z_{j}\right) $, and what
follows will be concerned with the discrepancy between integrals and sums
for functions in Sobolev classes $\mathbb{W}^{\alpha ,p}\left( \mathcal{M}%
\right) $ with $1\leq p\leq +\infty $ and $\alpha >d/p$. The assumption $%
\alpha >d/p$ guarantees the boundedness and continuity of the function $%
f\left( x\right) $, otherwise $f\left( z_{j}\right) $ may be not defined. As
a motivation, assume there exists a decomposition of $\mathcal{M}$\ into $N$%
\ disjoint pieces $\mathcal{M}=U_{1}\cup U_{2}\cup ...\cup U_{N}$\ and these
pieces have measures $N^{-1}$\ and diameters at most $cN^{-1/d}$. Choosing a
point $z_{j}$ in each $U_{j}$, one obtains the estimate
\begin{gather*}
\left\vert N^{-1}\sum_{j=1}^{N}f\left( z_{j}\right) -\int_{\mathcal{M}%
}f(x)dx\right\vert \\
\leq \sum_{j=1}^{N}\int_{U_{j}}\left\vert f\left( z_{j}\right)
-f(x)\right\vert dx\leq \sup_{\left\vert y-x\right\vert \leq
cN^{-1/d}}\left\{ \left\vert f\left( y\right) -f(x)\right\vert \right\} .
\end{gather*}

In particular, since functions in $\mathbb{W}^{\alpha ,p}\left( \mathcal{M}%
\right) $ with $\alpha >d/p$ are H\"{o}lder continuous of degree $\alpha
-d/p $, one obtains
\begin{equation*}
\left| N^{-1}\sum_{j=1}^{N}f\left( z_{j}\right) -\int_{\mathcal{M}%
}f(x)dx\right| \leq cN^{-\left( \alpha -d/p\right) /d}\left\| f\right\| _{%
\mathbb{W}^{\alpha ,p}\left( \mathcal{M}\right) }.
\end{equation*}

On the other hand, it will be shown that suitable choices of the sampling
points $\left\{ z_{j}\right\} $ improve the exponent $1/p-\alpha /d$ to $%
-\alpha /d$ and this is best possible. More precisely, the main results in
this paper are the following:

\textit{(1) For every }$d/2<\alpha <d/2+1$ \textit{there exists }$c>0$%
\textit{\ such that if }$\mathcal{M}=U_{1}\cup U_{2}\cup ...\cup U_{N}$%
\textit{\ is a decomposition of the manifold in disjoint pieces with measure
}$\left| U_{j}\right| =\omega _{j}$\textit{, then there exists a
distribution of points }$\left\{ z_{j}\right\} _{j=1}^{N}$\textit{\ with }$%
z_{j}\in U_{j}$\textit{\ such that for every function }$f(x)$\textit{\ in
the Sobolev space }$\mathbb{W}^{\alpha ,2}\left( \mathcal{M}\right) $,
\begin{equation*}
\left| \sum_{j=1}^{N}\omega _{j}f\left( z_{j}\right) -\int_{\mathcal{M}%
}f(x)dx\right| \leq c\max_{1\leq j\leq N}\left\{ \mathrm{diameter}\left(
U_{j}\right) ^{\alpha }\right\} \left\| f\right\| _{\mathbb{W}^{\alpha ,2}}.
\end{equation*}

\textit{(2) Assume that the points }$\left\{ z_{j}\right\} _{j=1}^{N}$%
\textit{\ and the positive weights }$\left\{ \omega _{j}\right\} _{j=1}^{N}$%
\textit{\ give an exact quadrature for all eigenfunctions with eigenvalues }$%
\lambda ^{2}<r^{2}$\textit{, that is}
\begin{equation*}
\sum_{j=1}^{N}\omega _{j}\varphi _{\lambda }\left( z_{j}\right) =\int_{%
\mathcal{M}}\varphi _{\lambda }(x)dx=\left\{
\begin{array}{l}
1\;\;\;\text{\textit{if }}\lambda =0\text{\textit{,}} \\
0\;\;\;\text{if }0<\lambda <r\text{\textit{.}}%
\end{array}
\right.
\end{equation*}

\textit{Then for every }$1\leq p\leq +\infty $\textit{\ and }$\alpha >d/p$\
\textit{there exist }$c>0$\textit{, which may depend on $\mathcal{M}$}, $p$%
\textit{, }$\alpha $\textit{, but is independent of }$r$\textit{, }$\left\{
z_{j}\right\} _{j=1}^{N}$\textit{\ and }$\left\{ \omega _{j}\right\}
_{j=1}^{N}$\textit{, such that}
\begin{equation*}
\left| \sum_{j=1}^{N}\omega _{j}f\left( z_{j}\right) -\int_{\mathcal{M}%
}f(x)dx\right| \leq cr^{-\alpha }\left\| f\right\| _{\mathbb{W}^{\alpha ,p}}.
\end{equation*}

\textit{(3) If }$1\leq p\leq +\infty $\textit{\ and }$\alpha >d/p$\textit{,
then there exist }$c>0$\textit{\ and sequences of points }$\left\{
z_{j}\right\} _{j=1}^{N}$\textit{\ and positive weights }$\left\{ \omega
_{j}\right\} _{j=1}^{N}$\textit{\ with}
\begin{equation*}
\left| \sum_{j=1}^{N}\omega _{j}f\left( z_{j}\right) -\int_{\mathcal{M}%
}f(x)dx\right| \leq cN^{-\alpha /d}\left\| f\right\| _{\mathbb{W}^{\alpha
,p}}.
\end{equation*}

\textit{(4) For every }$1\leq p\leq +\infty $\textit{\ and }$\alpha >d/p$%
\textit{\ there exists }$c>0$\textit{\ such that for every distribution of
points }$\left\{ z_{j}\right\} _{j=1}^{N}$\textit{\ and numbers }$\left\{
\omega _{j}\right\} _{j=1}^{N}$\textit{\ there exists a function }$f(x)$%
\textit{\ in }$\mathbb{W}^{\alpha ,p}\left( \mathcal{M}\right) $\textit{\
with}
\begin{equation*}
\left| \sum_{j=1}^{N}\omega _{j}f\left( z_{j}\right) -\int_{\mathcal{M}%
}f(x)dx\right| \geq cN^{-\alpha /d}\left\| f\right\| _{\mathbb{W}^{\alpha
,p}}.
\end{equation*}

An explicit example is the following. The torus $\mathbb{T}^{d}$ can be
partitioned into $N=n^{d}$ congruent cubes with sides $1/n$ and this
partition generates the mesh of points $\left( n^{-1}\mathbb{Z}^{d}\right)
\cap \mathbb{T}^{d}$, which gives an exact quadrature at least for all
exponentials $\exp \left( 2\pi ikx\right) $ with $\left\vert
k_{j}\right\vert <n$. In this case, (1) and (2) give an upper bound for the
error in numerical integration of the order of $N^{-\alpha /d}$. More
generally, if a manifold is decomposed into $N$ disjoint pieces $\mathcal{M}%
=U_{1}\cup U_{2}\cup ...\cup U_{N}$ with diameters $\leq cN^{-1/d}$, then
(1) gives the upper bound $N^{-\alpha /d}$. Moreover, for every $r>0$ there
are approximately $cr^{d}$ eigenfunctions with eigenvalues $\lambda
^{2}<r^{2}$ and one can choose $N\leq cr^{d}$ nodes $\left\{ z_{j}\right\}
_{j=1}^{N}$ and positive weights $\left\{ \omega _{j}\right\} _{j=1}^{N}$
which give an exact quadrature for these eigenfunctions. Then in this case
(2) gives the above upper bound $N^{-\alpha /d}$. Hence (1) and (2) imply
(3), and by (4) this latter is optimal. When the manifold is a torus or a
sphere these results are essentially known, and indeed there is a huge
literature on this subject. See \cite{Novak} for deterministic and
stochastic error bounds in numerical analysis. In particular, (3) and (4)
for $p=2$ and for spheres are contained in \cite{BH}, \cite{HS1} and
\cite{HS2}. For Besov spaces on spheres a result slightly more precise than
(3) is in \cite{SMS}, while a result slightly weaker than (4) for compact
two point homogeneous spaces is in \cite{Kushpel}. See also \cite{CS}
and, for a survey on related results, \cite{HaSa} and \cite{KS}. Beside
the proofs of (1), (2), (3), (4), which are contained in the following
section, the paper contains also a final section with a number of further
results and remarks. Among them it is proved that if a quadrature rule gives
an optimal error in the Sobolev space $\mathbb{W}^{\alpha ,2}\left( \mathcal{%
M}\right) $, then this quadrature rule is optimal also in all spaces $%
\mathbb{W}^{\beta ,2}\left( \mathcal{M}\right) $ with $d/2<\beta <\alpha $.
Moreover, it is proved that there is a relation between quadrature rules and
geometric discrepancy:

\textit{(5) If }$d\nu (x)$\textit{\ is a probability measure on }$\mathcal{M}
$\textit{, then the norm of the measure }$d\nu (x)-dx$\textit{\ as a linear
functional on }$\mathbb{W}^{\alpha ,2}\left( \mathcal{M}\right) $\textit{\
decreases as }$\alpha $\textit{\ increases. Moreover, if the norm of }$d\nu
(x)-dx$\textit{\ on }$\mathbb{W}^{\alpha ,2}\left( \mathcal{M}\right) $%
\textit{\ is }$r^{-\alpha }$\textit{,}
\begin{equation*}
\left| \int_{\mathcal{M}}f(x)d\nu (x)-\int_{\mathcal{M}}f(x)dx\right| \leq
r^{-\alpha }\left\| f\right\| _{\mathbb{W}^{\alpha ,2}},
\end{equation*}
\textit{then for every }$d/2<\beta <\alpha $\textit{\ there exists a
constant }$c$\textit{\ which may depend on }$\alpha $\textit{, }$\beta $%
\textit{, }$\mathcal{M}$\textit{, but is independent of }$r$\textit{\ and }$%
d\nu (x)$\textit{, such that}
\begin{equation*}
\left| \int_{\mathcal{M}}f(x)d\nu (x)-\int_{\mathcal{M}}f(x)dx\right| \leq
cr^{-\beta }\left\| f\right\| _{\mathbb{W}^{\beta ,2}}.
\end{equation*}

\textit{(6) Assume that for some }$r\geq 1$\textit{\ the discrepancy of the
probability measure }$d\nu (x)$\textit{\ with respect to the balls }$\left\{
B\left( y,\delta \right) \right\} $\textit{\ with center }$y$\textit{\ and
radius }$\delta $\textit{\ satisfies the estimates}
\begin{equation*}
\left| \int_{B\left( y,\delta \right) }d\nu (x)-\int_{B\left( y,\delta
\right) }dx\right| \leq \left\{
\begin{array}{l}
r^{-d}\;\;\;\text{\textit{if }}\delta \leq 1/r\text{\textit{,}} \\
r^{-1}\delta ^{d-1}\;\;\;\text{\textit{if }}\delta \geq 1/r\text{\textit{.}}%
\end{array}
\right.
\end{equation*}

\textit{Then for every }$1\leq p\leq +\infty $\textit{\ and }$\alpha >d/p$%
\textit{, there exists a constant }$c$\textit{, which may depend on }$\alpha
$\textit{\ and }$p$\textit{, but is independent of }$d\nu (x)$\textit{\ and }%
$r$\textit{, such that}
\begin{equation*}
\left| \int_{\mathcal{M}}f(x)d\nu (x)-\int_{\mathcal{M}}f(x)dx\right| \leq
\left\{
\begin{array}{l}
cr^{-\alpha }\left\| f\right\| _{\mathbb{W}^{\alpha ,p}}\;\;\;\text{\textit{%
if }}0<\alpha <1\text{\textit{,}} \\
cr^{-1}\log (1+r)\left\| f\right\| _{\mathbb{W}^{\alpha ,p}}\;\;\;\text{%
\textit{if }}\alpha =1\text{\textit{,}} \\
cr^{-1}\left\| f\right\| _{\mathbb{W}^{\alpha ,p}}\;\;\;\text{\textit{if }}%
\alpha >1\text{\textit{.}}%
\end{array}
\right.
\end{equation*}

Observe that while (1) and (2) hold for specific quadrature rules, (5) is a
result for arbitrary quadratures. Actually, (5) is only one way, from $%
\alpha $ to $\beta <\alpha $. The estimate $r^{-\alpha }$ for an $\alpha $
does not necessarily imply the estimate $cr^{-\beta }$ for $\beta >\alpha $.
Moreover, the sets $\left\{ B\left( y,\delta \right) \right\} $ in (6) are
not precisely geodesic balls, but level sets of suitable kernels on the
manifold. However, for spheres or compact rank one symmetric spaces these
sets are geodesic balls. In this case the discrepancy of the measure is the
spherical cap discrepancy. See \cite{BeCh} or \cite{Matousek}, and for
other relations between quadrature and discrepancy on spheres, see also
\cite{ABG}. Finally, we would like to point out that our paper is (almost)
self contained, it does not rely on explicit properties of manifolds or
special functions, and it may provide a unified vision and simple
alternative proofs of some known results.

\section{Main results}

The eigenfunction expansions of functions and operators are a basic tool in
what follows. The system of eigenfunctions $\left\{ \varphi _{\lambda
}(x)\right\} $ is orthonormal complete in $\mathbb{L}^{2}(\mathcal{M})$ and
to every square integrable function one can associate a Fourier transform
and series,
\begin{equation*}
\mathcal{F}f(\lambda )=\int_{\mathbb{M}}f(y)\overline{\varphi _{\lambda }(y)%
}dy,\;\;\;f(x)=\sum\limits_{\lambda }\mathcal{F}f(\lambda )\varphi
_{\lambda }(x).
\end{equation*}

Since the Laplace operator is elliptic, the eigenfunctions are smooth and it
is possible to extend the definition of Fourier transforms and series to
distributions. In particular, the Fourier expansions are always convergent,
at least in the topology of distributions. One can write the discrepancy
between integral and Riemann sum as a single integral with respect to a
measure $d\mu (x)=\sum_{j=1}^{N}\omega _{j}\delta _{z_{j}}(x)-dx$, with $%
\delta _{y}(x)$ the Dirac measure concentrated at the point $y$ and $dx$ the
Riemannian measure,
\begin{equation*}
\sum_{j=1}^{N}\omega _{j}f\left( z_{j}\right) -\int_{\mathcal{M}%
}f(x)dx=\int_{\mathcal{M}}f(x)d\mu (x).
\end{equation*}

Then the estimate of the error in the numerical integration reduces to the
estimate of the norm of a linear functional $d\mu (x)$ on a space of test
functions $f(x)$. Some of the results which follow will be stated for
generic finite signed measures $d\mu (x)$, for measures of the form $d\mu
(x)=d\nu (x)-dx$ with $d\nu (x)$ a probability measure, and also for atomic
probability measures $d\nu (x)=\sum_{j=1}^{N}\omega _{j}\delta _{z_{j}}(x)$%
. The following is an easy and straightforward extension to compact
manifolds and $p$ norms of some abstract results for reproducing kernel
Hilbert spaces. See e.g. \cite{AZ}, \cite{CF}, \cite{DLRS}.

\begin{thm}
\label{T1}\textit{\ }Let $\left\{ \psi (\lambda )\right\} $\ be a numeric
sequence indexed by the eigenvalues $\left\{ \lambda ^{2}\right\} $, with $%
\left\{ \psi (\lambda )\right\} $\ and $\left\{ \psi (\lambda )^{-1}\right\}
$\ slowly increasing, that is $\left\vert \psi (\lambda )\right\vert \leq
a\left( 1+\lambda ^{2}\right) ^{\alpha /2}$\ and $\left\vert \psi (\lambda
)^{-1}\right\vert \leq b\left( 1+\lambda ^{2}\right) ^{\beta /2}$. Let $%
A(x,y)$\ and $B(x,y)$\ be distribution kernels with Fourier transforms $%
\left\{ \psi (\lambda )\right\} $\ and $\left\{ \psi (\lambda )^{-1}\right\}
$,
\begin{equation*}
A(x,y)=\sum_{\lambda }\psi (\lambda )\varphi _{\lambda }(x)\overline{%
\varphi _{\lambda }(y)},\;\;\;B(x,y)=\sum_{\lambda }\psi (\lambda
)^{-1}\varphi _{\lambda }(x)\overline{\varphi _{\lambda }(y)}.
\end{equation*}%
Finally, let $f(x)$\ be a continuous function and let $d\mu (x)$\ be a
finite measure on $\mathcal{M}$. Then, if $1\leq p,q\leq +\infty $\ and $1/p+1/q=1$,
\begin{gather*}
\left\vert \int_{\mathcal{M}}f(x)d\mu (x)\right\vert \\
\leq \left\{ \int_{\mathcal{M}}\left\vert \int_{\mathcal{M}%
}A(x,y)f(y)dy\right\vert ^{p}dx\right\} ^{1/p}\left\{ \int_{\mathcal{M}%
}\left\vert \int_{\mathcal{M}}B(x,y)d\mu (x)\right\vert ^{q}dy\right\}
^{1/q}.
\end{gather*}%
In particular, when $p=q=2$\ and $B(x,y)=B\left( y,x\right) $\ and
\begin{equation*}
f(x)=\int_{\mathcal{M}}\int_{\mathcal{M}}B\left( x,y\right) \overline{%
B\left( y,z\right) }dy\overline{d\mu (z)},
\end{equation*}%
\ then the above inequality reduces to an equality.
\end{thm}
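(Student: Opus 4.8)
The plan is to exploit that the operators with kernels $A$ and $B$ are, formally, mutually inverse: their Fourier multipliers are $\psi(\lambda)$ and $\psi(\lambda)^{-1}$, whose product is $1$, so composing them gives the identity operator, whose kernel is the reproducing kernel $K(x,y)=\sum_\lambda\varphi_\lambda(x)\overline{\varphi_\lambda(y)}$. Accordingly, set $g(x)=\int_{\mathcal M}A(x,y)f(y)\,dy$ and $h(y)=\int_{\mathcal M}B(x,y)\,d\mu(x)$. If the right-hand side of the asserted inequality is not finite there is nothing to prove, so one may assume it is; the hypothesis that $\psi$ and $\psi^{-1}$ are slowly increasing guarantees that $A$ and $B$ act continuously on the relevant distribution and Sobolev scales, so under this assumption $g$ and $h$ are genuine functions belonging to $\mathbb L^p(\mathcal M)$ and $\mathbb L^q(\mathcal M)$ respectively.

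The first and main step is the identity
\[
\int_{\mathcal M}f(x)\,d\mu(x)=\int_{\mathcal M}g(x)h(x)\,dx .
\]
To obtain it, substitute the eigenfunction expansions of $A$ and $B$, interchange the integrals (Fubini), and collapse the composed kernel
\[
\int_{\mathcal M}A(x,y)B(z,x)\,dx=\sum_{\mu,\lambda}\psi(\mu)\,\psi(\lambda)^{-1}\Bigl(\int_{\mathcal M}\varphi_\mu(x)\overline{\varphi_\lambda(x)}\,dx\Bigr)\overline{\varphi_\mu(y)}\,\varphi_\lambda(z)=\sum_\lambda\varphi_\lambda(z)\overline{\varphi_\lambda(y)}=K(z,y),
\]
using orthonormality of $\{\varphi_\lambda\}$; since $K$ is the reproducing kernel, $\int_{\mathcal M}K(z,y)f(y)\,dy=f(z)$, and integrating against $d\mu(z)$ gives the claim. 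Once this identity is in hand, H\"older's inequality applied to $\int_{\mathcal M}g\,h\,dx$ yields exactly
\[
\Bigl|\int_{\mathcal M}f\,d\mu\Bigr|\le\Bigl(\int_{\mathcal M}|g(x)|^p\,dx\Bigr)^{1/p}\Bigl(\int_{\mathcal M}|h(y)|^q\,dy\Bigr)^{1/q},
\]
which is the assertion.

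For the equality statement ($p=q=2$, $B(x,y)=B(y,x)$, and $f$ the displayed double integral) I would just compute $g$. Applying $A$ to $f$ and using $\int_{\mathcal M}A(x,u)B(u,y)\,du=K(x,y)$ together with the reproducing property of $K$, one finds $g(x)=\int_{\mathcal M}\overline{B(x,z)}\,\overline{d\mu(z)}$; the symmetry of $B$ then identifies this with $\overline{h(x)}$. Hence $\int_{\mathcal M}g\,h\,dx=\int_{\mathcal M}|h|^2\,dx=\|g\|_2\,\|h\|_2$, so for this $f$ the Cauchy--Schwarz step, and therefore the whole inequality, becomes an equality.

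The only delicate point — and the one I expect to be the real obstacle to a fully rigorous write-up, rather than the inequality itself — is bookkeeping the convergence: justifying that the eigenfunction series defining $A$ and $B$ converge in the sense of distributions, that the interchanges of summation and integration and the application of Fubini are legitimate, and that the collapse of the composed kernel to $K$ holds in the required sense. This is precisely where the slowly increasing conditions on $\psi$ and $\psi^{-1}$ are used; a clean way to organize it is to verify the key identity first for smooth $f$ and measures with smooth density, where everything converges absolutely, and then pass to the general case by the continuity of $A$ and $B$ on the appropriate function and distribution spaces.
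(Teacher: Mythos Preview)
Your proposal is correct and follows essentially the same approach as the paper: introduce the functions $g=\int A(\cdot,y)f(y)\,dy$ and $h=\int B(x,\cdot)\,d\mu(x)$, use that $A$ and $B$ are mutually inverse Fourier multipliers to rewrite $\int f\,d\mu$ as the pairing $\int gh$, and apply H\"older; the equality case is handled by the same direct verification that $g$ and $\overline{h}$ coincide. The paper is somewhat more cavalier about the convergence bookkeeping you flag, simply declaring that the pairing between a distribution and a test function will be written as an integral even when divergent.
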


\begin{proof}
The assumptions $\left\{ \psi (\lambda )\right\} $\ and $\left\{ \psi
(\lambda )^{-1}\right\} $\ slowly increasing simply imply that the kernels $%
A(x,y)$ and $B(x,y)$ are tempered distributions. In what follows the pairing
between a test function and a distribution is denoted with an integral, even
when the distribution is not a function and the integral is divergent. Let
\begin{gather*}
\int_{\mathcal{M}}A(x,y)f(y)dy=\sum\limits_{\lambda }\psi (\lambda )%
\mathcal{F}f(\lambda )\varphi _{\lambda }(x), \\
\int_{\mathcal{M}}B(x,y)d\mu (y)=\sum\limits_{\lambda }\psi (\lambda )^{-1}%
\mathcal{F}\mu (\lambda )\varphi _{\lambda }(x).
\end{gather*}%
These operators are one the inverse of the other,
\begin{gather*}
f(x)=\int_{\mathcal{M}}B(x,y)\left( \int_{\mathcal{M}}A(y,z)f(z)dz\right)
dy \\
=\int_{\mathcal{M}}A(x,y)\left( \int_{\mathcal{M}}B(y,z)f(z)dz\right) dy.
\end{gather*}%
In particular, by H\"{o}lder inequality with $1/p+1/q=1$,
\begin{gather*}
\left\vert \int_{\mathcal{M}}f(x)d\mu (x)\right\vert =\left\vert \int_{%
\mathcal{M}}\int_{\mathcal{M}}\int_{\mathcal{M}}B(x,y)A(y,z)f(z)d\mu
(x)dydz\right\vert \\
\leq \left\{ \int_{\mathcal{M}}\left\vert \int_{\mathcal{M}%
}A(y,z)f(z)dz\right\vert ^{p}dy\right\} ^{1/p}\left\{ \int_{\mathcal{M}%
}\left\vert \int_{\mathcal{M}}B(x,y)d\mu (x)\right\vert ^{q}dy\right\}
^{1/q}.
\end{gather*}%
Finally, when $p=q=2$ the Cauchy inequality reduces to an equality if the
functions are proportional. Indeed, if $B\left( x,y\right) =B\left(
y,x\right) $ and
\begin{equation*}
f(x)=\int_{\mathcal{M}}\int_{\mathcal{M}}B\left( x,y\right) \overline{%
B\left( y,z\right) }dy\overline{d\mu (z)},
\end{equation*}%
then one easily verifies that
\begin{gather*}
\int_{\mathcal{M}}f(x)d\mu (x)=\int_{\mathcal{M}}\left\vert \int_{%
\mathcal{M}}B\left( x,y\right) d\mu (x)\right\vert ^{2}dy, \\
\left\{ \int_{\mathcal{M}}\left\vert \int_{\mathcal{M}}A(x,y)f(y)dy\right%
\vert ^{2}dx\right\} ^{1/2}=\left\{ \int_{\mathcal{M}}\left\vert \int_{%
\mathcal{M}}B\left( x,y\right) d\mu (x)\right\vert ^{2}dy\right\} ^{1/2}.
\end{gather*}%
Hence, when $p=q=2$ for this function the inequality in the theorem reduces
to an equality.\
\end{proof}

In what follows the operators with kernels $A(x,y)$ and $B(x,y)$ will be
powers of the Laplace Beltrami operator $\left( I+\Delta \right) ^{\pm
\alpha /2}$.

\begin{defin}
The Sobolev space $W^{\alpha ,p}\left( \mathcal{M}\right) $, $-\infty
<\alpha <+\infty $\ and $1\leq p\leq +\infty $, consists of all
distributions on $\mathcal{M}$\ with $\left( I+\Delta \right) ^{\alpha /2}f(x)$\ in $%
L^{p}\left( \mathcal{M}\right) $, that is with
\begin{equation*}
\left\Vert f\right\Vert _{\mathbb{W}^{\alpha ,p}}=\left\{ \int_{\mathcal{M}%
}\left\vert \sum_{\lambda }\left( 1+\lambda ^{2}\right) ^{\alpha /2}%
\mathcal{F}f\left( \lambda \right) \varphi _{\lambda }(x)\right\vert
^{p}dx\right\} ^{1/p}<+\infty .
\end{equation*}
\end{defin}

An equivalent definition is the following.

\begin{defin}
Let $B^{\alpha }(x,y)$, $-\infty <\alpha <+\infty $,\ be the Bessel kernel
\begin{equation*}
B^{\alpha }(x,y)=\sum_{\lambda }\left( 1+\lambda ^{2}\right) ^{-\alpha
/2}\varphi _{\lambda }(x)\overline{\varphi _{\lambda }(y)}.
\end{equation*}%
A distribution $f(x)$\ is in the Sobolev space $W^{\alpha ,p}\left( \mathcal{%
M}\right) $\ if and only if it is a Bessel potential of a function $g(x)$\
in $L^{p}\left( \mathcal{M}\right) $,
\begin{equation*}
f(x)=\int_{\mathcal{M}}B^{\alpha }(x,y)g(y)dy.
\end{equation*}%
Moreover, $\left\Vert f\right\Vert _{\mathbb{W}^{\alpha ,p}}=\left\Vert
g\right\Vert _{\mathbb{L}^{p}}$.
\end{defin}

In particular, when $p=2$,
\begin{equation*}
\left\Vert f\right\Vert _{\mathbb{W}^{\alpha ,2}}=\left\{ \sum_{\lambda
}\left( 1+\lambda ^{2}\right) ^{\alpha }\left\vert \mathcal{F}f\left(
\lambda \right) \right\vert ^{2}\right\} ^{1/2}.
\end{equation*}

Another equivalent definition is a localization result: A distribution $f(x)$
is in $\mathbb{W}^{\alpha ,p}\left( \mathcal{M}\right) $ if and only if for
every smooth function $g(x)$ with support in a local card $x=\psi (y):%
\mathbb{R}^{d}\leadsto \mathcal{M}$, the distribution $g(\psi (y))f(\psi
(y)) $ is in $\mathbb{W}^{\alpha ,p}\left( \mathbb{R}^{d}\right) $. In
particular, if $\alpha $ is a positive even integer, then $f(x)$ is in $%
\mathbb{W}^{\alpha ,p}\left( \mathcal{M}\right) $ if and only if the $p$ th
power of $f(x)$ and of $\Delta ^{\alpha /2}f(x)$ are integrable. Moreover,
distributions in $\mathbb{W}^{\alpha ,p}\left( \mathcal{M}\right) $ with $%
\alpha >d/p$ are H\"{o}lder continuous of degree $\alpha -d/p$. When applied
to functions in Sobolev classes, Theorem \ref{T1} gives the following
corollary.

\begin{cor}
\label{C2}(1) If $B^{\alpha }(x,y)=\sum_{\lambda }\left( 1+\lambda
^{2}\right) ^{-\alpha /2}\varphi _{\lambda }(x)\overline{\varphi _{\lambda
}(y)}$\ is the Bessel kernel, if $d\mu (x)$\ is a finite measure on $\mathcal{M}$, and
if $f(x)$\ is a continuous function in $W^{\alpha ,p}\left( \mathcal{M}%
\right) $, with $1\leq p,q\leq +\infty $\ and $1/p+1/q=1$, then
\begin{equation*}
\left\vert \int_{\mathcal{M}}f(x)d\mu (x)\right\vert \leq \left\{ \int_{%
\mathcal{M}}\left\vert \int_{\mathcal{M}}B^{\alpha }(x,y)d\mu
(x)\right\vert ^{q}dy\right\} ^{1/q}\left\Vert f\right\Vert _{\mathbb{W}%
^{\alpha ,p}}.
\end{equation*}%
If $\alpha >d/p$\ then the above integrals are well-defined and finite. On
the contrary, the spaces $W^{\alpha ,p}\left( \mathcal{M}\right) $\ with $%
\alpha \leq d/p$\ contain unbounded functions and, if the measure $d\mu (x)$%
\ does not vanish on the set where $f(x)=\infty $, then $\int_{\mathcal{M}%
}f(x)d\mu (x)$\ may diverge.

(2) When $p=q=2$\ then the above inequality simplifies,
\begin{equation*}
\left\vert \int_{\mathcal{M}}f(x)d\mu (x)\right\vert \leq \left\{ \int_{%
\mathcal{M}}\int_{\mathcal{M}}B^{2\alpha }\left( x,y\right) d\mu (x)%
\overline{d\mu (y)}\right\} ^{1/2}\left\Vert f\right\Vert _{\mathbb{W}%
^{\alpha ,2}}
\end{equation*}%
Equivalently, by the Fourier expansion of the Bessel kernel,
\begin{equation*}
\left\vert \int_{\mathcal{M}}f(x)d\mu (x)\right\vert \leq \left\{
\sum_{\lambda }\left( 1+\lambda ^{2}\right) ^{-\alpha }\left\vert \mathcal{F%
}\mu \left( \lambda \right) \right\vert ^{2}\right\} ^{1/2}\left\Vert
f\right\Vert _{\mathbb{W}^{\alpha ,2}}.
\end{equation*}%
Moreover, with $f(x)=\int_{\mathcal{M}}B^{2\alpha }\left( x,y\right)
\overline{d\mu (y)}$\ the above inequalities reduce to equalities.

(3) If $d\mu (x)=d\nu (x)-dx$\ is the difference between a probability
measure $d\nu (x)$\ and the Riemannian measure $dx$, then
\begin{equation*}
\left\vert \int_{\mathcal{M}}f(x)d\nu (x)-\int_{\mathcal{M}%
}f(x)dx\right\vert \leq \left\{ \int_{\mathcal{M}}\int_{\mathcal{M}%
}B^{2\alpha }\left( x,y\right) d\nu (x)d\nu (y)-1\right\} ^{1/2}\left\Vert
f\right\Vert _{\mathbb{W}^{\alpha ,2}}.
\end{equation*}%
Equivalently,
\begin{equation*}
\left\vert \int_{\mathcal{M}}f(x)d\nu (x)-\int_{\mathcal{M}%
}f(x)dx\right\vert \leq \left\{ \sum_{\lambda >0}\left( 1+\lambda
^{2}\right) ^{-\alpha }\left\vert \mathcal{F}\nu \left( \lambda \right)
\right\vert ^{2}\right\} ^{1/2}\left\Vert f\right\Vert _{\mathbb{W}^{\alpha
,2}}.
\end{equation*}
\end{cor}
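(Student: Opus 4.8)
The plan is to obtain all three parts of Corollary~\ref{C2} from Theorem~\ref{T1} by the single specialization $\psi (\lambda )=\left( 1+\lambda ^{2}\right) ^{\alpha /2}$. With this choice both $\left\{ \psi (\lambda )\right\} $ and $\left\{ \psi (\lambda )^{-1}\right\} =\left\{ (1+\lambda ^{2})^{-\alpha /2}\right\} $ are slowly increasing (the second is in fact bounded by $1$), so the hypotheses of Theorem~\ref{T1} are satisfied. The kernel $A(x,y)=\sum_{\lambda }(1+\lambda ^{2})^{\alpha /2}\varphi _{\lambda }(x)\overline{\varphi _{\lambda }(y)}$ is the kernel of $\left( I+\Delta \right) ^{\alpha /2}$, so that $\int_{\mathcal{M}}A(x,y)f(y)\,dy=\left( I+\Delta \right) ^{\alpha /2}f(x)$ and, by the definition of the Sobolev norm, $\{\int_{\mathcal{M}}|\int_{\mathcal{M}}A(x,y)f(y)\,dy|^{p}\,dx\}^{1/p}=\Vert f\Vert _{\mathbb{W}^{\alpha ,p}}$, while the kernel with Fourier transform $\left\{ \psi (\lambda )^{-1}\right\} $ is precisely the Bessel kernel $B^{\alpha }(x,y)$. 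Feeding this into the inequality of Theorem~\ref{T1} is exactly the assertion of part~(1).

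Next I would verify that the right-hand side is finite when $\alpha >d/p$. Since $\mathbb{W}^{\alpha ,p}(\mathcal{M})\hookrightarrow L^{\infty }(\mathcal{M})$ in this range, the Bessel potential $g\mapsto \int_{\mathcal{M}}B^{\alpha }(\cdot ,y)g(y)\,dy$ is bounded from $L^{p}$ to $L^{\infty }$; hence for each fixed $x$ the functional $g\mapsto \int_{\mathcal{M}}B^{\alpha }(y,x)g(y)\,dy$ is bounded on $L^{p}$ with norm independent of $x$, so its representing function $B^{\alpha }(\cdot ,x)$ lies in $L^{q}$ with $\sup _{x}\Vert B^{\alpha }(\cdot ,x)\Vert _{L^{q}}<+\infty $. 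Using the Hermitian symmetry $B^{\alpha }(x,y)=\overline{B^{\alpha }(y,x)}$ and Minkowski's integral inequality, $\{\int_{\mathcal{M}}|\int_{\mathcal{M}}B^{\alpha }(x,y)\,d\mu (x)|^{q}\,dy\}^{1/q}\leq \int_{\mathcal{M}}\Vert B^{\alpha }(x,\cdot )\Vert _{L^{q}}\,d|\mu |(x)\leq |\mu |(\mathcal{M})\sup _{x}\Vert B^{\alpha }(\cdot ,x)\Vert _{L^{q}}<+\infty $. The complementary remark, that for $\alpha \leq d/p$ the space contains unbounded functions and the pairing may diverge, follows from the corresponding local statement on $\mathbb{R}^{d}$ together with a point mass placed at a singularity, and I would only sketch it.

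For part~(2) take $p=q=2$. The extra ingredient is the semigroup identity $\int_{\mathcal{M}}B^{\alpha }(x,y)B^{\alpha }(y,z)\,dy=B^{2\alpha }(x,z)$, checked termwise from the eigenfunction expansions via the orthonormality $\int_{\mathcal{M}}\varphi _{\lambda }(y)\overline{\varphi _{\lambda ^{\prime }}(y)}\,dy=\delta _{\lambda \lambda ^{\prime }}$. Squaring the second factor in part~(1), interchanging the order of integration (legitimate since all quantities are finite by the previous paragraph) and using $\overline{B^{\alpha }(x^{\prime },y)}=B^{\alpha }(y,x^{\prime })$, one gets $\int_{\mathcal{M}}|\int_{\mathcal{M}}B^{\alpha }(x,y)\,d\mu (x)|^{2}\,dy=\int_{\mathcal{M}}\int_{\mathcal{M}}B^{2\alpha }(x,y)\,d\mu (x)\overline{d\mu (y)}$; expanding $B^{2\alpha }$ and recalling that for a real signed measure $\mathcal{F}\mu (\lambda )=\overline{\int_{\mathcal{M}}\varphi _{\lambda }\,d\mu }$ turns this into $\sum _{\lambda }(1+\lambda ^{2})^{-\alpha }|\mathcal{F}\mu (\lambda )|^{2}$. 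The claim that equality holds for $f(x)=\int_{\mathcal{M}}B^{2\alpha }(x,y)\overline{d\mu (y)}$ is the equality case of Theorem~\ref{T1}: choosing the eigenfunctions real (possible since $\Delta $ is a real operator) makes $B^{\alpha }$ symmetric, and then $\int_{\mathcal{M}}B^{\alpha }(x,y)\overline{B^{\alpha }(y,z)}\,dy=B^{2\alpha }(x,z)$, which is exactly the test function prescribed there.

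Finally, part~(3) follows by inserting $d\mu (x)=d\nu (x)-dx$. Because $\varphi _{0}\equiv 1$, only the term $\lambda =0$ survives in $\int_{\mathcal{M}}B^{2\alpha }(x,y)\,dy$, so this integral equals $1$ for every $x$; since $\nu $ is a probability measure, expanding $\int_{\mathcal{M}}\int_{\mathcal{M}}B^{2\alpha }(x,y)\,d\mu (x)d\mu (y)$ into four terms shows that the three terms involving $dx$ each equal $1$, leaving $\int_{\mathcal{M}}\int_{\mathcal{M}}B^{2\alpha }(x,y)\,d\nu (x)d\nu (y)-1$. Equivalently, $\mathcal{F}\mu (0)=\mathcal{F}\nu (0)-1=0$ and $\mathcal{F}\mu (\lambda )=\mathcal{F}\nu (\lambda )$ for $\lambda >0$, so the sum in part~(2) runs only over $\lambda >0$ with $\mathcal{F}\nu $ in place of $\mathcal{F}\mu $. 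The one genuinely delicate point throughout is justifying the interchanges of sums and integrals involving the distributional kernels — this is precisely where the hypothesis $\alpha >d/p$, through the integrability of $B^{\alpha }$, is used — whereas everything else is a direct specialization of Theorem~\ref{T1}.
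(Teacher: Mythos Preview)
Your proof is correct and follows essentially the same route as the paper: specialize Theorem~\ref{T1} with $\psi(\lambda)=(1+\lambda^{2})^{\alpha/2}$ for part~(1), use the semigroup identity $\int B^{\alpha}B^{\alpha}=B^{2\alpha}$ and symmetry of the Bessel kernel for part~(2), and expand $d\mu=d\nu-dx$ using $\int_{\mathcal{M}}B^{2\alpha}(x,y)\,dy=1$ for part~(3). Your treatment of the finiteness when $\alpha>d/p$ (via the $L^{q}$ bound on $B^{\alpha}(\cdot,x)$ and Minkowski) is a bit more explicit than the paper's, which simply appeals to the Sobolev imbedding and defers details to Lemma~\ref{L4} and Remark~\ref{R15}, but the substance is the same.
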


\begin{proof}
(1) is an immediate corollary of Theorem \ref{T1}. In order
to prove (2), observe that
\begin{equation*}
\int_{\mathcal{M}}B^{\alpha }(x,y)B^{\beta }(y,z)dy=B^{\alpha +\beta
}\left( x,z\right) .
\end{equation*}%
Moreover, this Bessel kernel is real and symmetric. Hence,
\begin{gather*}
\int_{\mathcal{M}}\left\vert \int_{\mathcal{M}}B^{\alpha }(x,y)d\mu
(x)\right\vert ^{2}dy \\
=\int_{\mathcal{M}}\int_{\mathcal{M}}\int_{\mathcal{M}}B^{\alpha
}(x,y)B^{\alpha }(z,y)dyd\mu (x)\overline{d\mu (z)} \\
=\int_{\mathcal{M}}\int_{\mathcal{M}}B^{2\alpha }(x,z)d\mu (x)\overline{%
d\mu (z)}.
\end{gather*}

(3) is a corollary of (1) and (2). Indeed, since $B^{2\alpha }\left(
x,y\right) =B^{2\alpha }\left( y,x\right) $ and $\int_{\mathcal{M}%
}B^{2\alpha }\left( x,y\right) dy=1$, it follows that
\begin{gather*}
\int_{\mathcal{M}}\int_{\mathcal{M}}B^{2\alpha }\left( x,y\right) \left(
d\nu (x)-dx\right) \left( d\nu (y)-dy\right) \\
=\int_{\mathcal{M}}\int_{\mathcal{M}}B^{2\alpha }\left( x,y\right) d\nu
(x)d\nu (y)-\int_{\mathcal{M}}\int_{\mathcal{M}}B^{2\alpha }\left(
x,y\right) d\nu (x)dy \\
-\int_{\mathcal{M}}\int_{\mathcal{M}}B^{2\alpha }\left( x,y\right) dxd\nu
(y)+\int_{\mathcal{M}}\int_{\mathcal{M}}B^{2\alpha }\left( x,y\right) dxdy
\\
=\int_{\mathcal{M}}\int_{\mathcal{M}}B^{2\alpha }\left( x,y\right) d\nu
(x)d\nu (y)-1.
\end{gather*}%
Finally, by Sobolev imbedding theorem, functions in $\mathbb{W}^{\alpha
,p}\left( \mathcal{M}\right) $\ with $\alpha >d/p$ are continuous and all
the above integrals are well-defined and finite. This also follows from
Lemma \ref{L4} and Remark \ref{R15} below.\
\end{proof}

The above corollary leads to estimate the energy integrals
\begin{gather*}
\left\{ \int_{\mathcal{M}}\left| \int_{\mathcal{M}}B^{\alpha }(x,y)d\mu
(x)\right| ^{q}dy\right\} ^{1/q}, \\
\left\{ \int_{\mathcal{M}}\int_{\mathcal{M}}B^{2\alpha }\left( x,y\right)
d\nu (x)d\nu (y)-1\right\} ^{1/2}=\left\{ \sum_{\lambda >0}\left( 1+\lambda
^{2}\right) ^{-\alpha }\left| \mathcal{F}\nu \left( \lambda \right) \right|
^{2}\right\} ^{1/2}.
\end{gather*}

By the last formula, the energy attains a minimum if and only if $\mathcal{F}%
\nu \left( \lambda \right) =0$ for all $\lambda >0$, and this gives the
Riemannian measure $dx$. The meaning of the corollary is that measures with
low energy are close to the Riemannian measure and they give good quadrature
rules. In order to give quantitative estimates for the above integrals, one
has to collect some properties of the Bessel kernels. The norm of the
function $y\leadsto B^{\alpha }(x,y)$ in $\mathbb{W}^{\gamma ,2}\left(
\mathcal{M}\right) $ is
\begin{equation*}
\left\| B^{\alpha }(x,\cdot )\right\| _{\mathbb{W}^{\gamma ,2}}=\left\{
\sum_{\lambda }\left( 1+\lambda ^{2}\right) ^{\gamma -\alpha }\left|
\varphi _{\lambda }(x)\right| ^{2}\right\} ^{1/2}.
\end{equation*}

By Weyl's estimates on the spectrum of an elliptic operator, see Theorem
17.5.3 in \cite{Hormander}, for every $r>1$ there are approximately $cr^{d}$
eigenfunctions $\varphi _{\lambda }(x)$ with eigenvalues $\lambda ^{2}<r^{2}$
and $\sum_{\lambda \leq r}\left\vert \varphi _{\lambda }(x)\right\vert
^{2}\leq cr^{d}$. It then follows that the norm in $\mathbb{W}^{\gamma
,2}\left( \mathcal{M}\right) $ of $B^{\alpha }(x,y)$ is finite provided that
$\gamma <\alpha -d/2$ and, by Sobolev imbedding theorem, it also follows
that $B^{\alpha }(x,y)$ is H\"{o}lder continuous of degree $\delta <\alpha -d
$. Indeed, we shall see that a bit more is true: $B^{\alpha }(x,y)$ is H\"{o}%
lder continuous of degree $\alpha -d$.

\begin{lem}
\textbf{\label{L3}}\textit{\ }The heat kernel $W\left( t,x,y\right)
=\sum_{\lambda }\exp \left( -\lambda ^{2}t\right) \varphi _{\lambda }(x)%
\overline{\varphi _{\lambda }(y)}$, which is the fundamental solution to the
heat equation $\partial /\partial t=-\Delta $\ on $\mathbb{R}_{+}\times \mathcal{M}$, is
symmetric real and positive, $W\left( t,x,y\right) =W\left( t,y,x\right) >0$%
\ for every $x,y\in \mathcal{M}$\ and $t>0$. Moreover, for every $m$\ and $n$\ there
exists $c$\ such that, if $\left\vert x-y\right\vert $\ denotes the
Riemannian distance between $x$\ and $y$, and $\nabla $\ the gradient,
\begin{equation*}
\left\{
\begin{array}{l}
\left\vert \nabla ^{m}W\left( t,x,y\right) \right\vert \leq
ct^{-(d+m)/2}\left( 1+\left\vert x-y\right\vert /\sqrt{t}\right) ^{-n}%
\mathit{\;\;\;}\text{\textit{if }}0<t\leq 1\text{\textit{,}} \\
\left\vert \nabla ^{m}W\left( t,x,y\right) \right\vert \leq c\mathit{\;\;\;}%
\text{\textit{if }}1\leq t<+\infty \text{\textit{.}}%
\end{array}%
\right.
\end{equation*}
\end{lem}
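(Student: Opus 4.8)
The plan is to obtain all the stated properties from standard parametrix/semigroup arguments, treating the short-time and long-time regimes separately and then patching. First I would recall that on a smooth compact Riemannian manifold the Laplace--Beltrami operator generates a positivity-preserving analytic semigroup $e^{-t\Delta}$ on $\mathbb{L}^{2}(\mathcal{M})$, whose integral kernel is exactly $W(t,x,y)=\sum_{\lambda}e^{-\lambda^{2}t}\varphi_{\lambda}(x)\overline{\varphi_{\lambda}(y)}$. Symmetry $W(t,x,y)=W(t,y,x)$ is immediate from the self-adjointness of $\Delta$ (equivalently, from the real symmetric form of the spectral sum, since one may choose the $\varphi_{\lambda}$ real); smoothness in all variables follows from ellipticity and hypoellipticity of $\partial/\partial t+\Delta$. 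Strict positivity $W(t,x,y)>0$ is the parabolic strong maximum principle (or Nash--Moser--Harnack): $W\ge 0$ because $e^{-t\Delta}$ is a Markov semigroup (the constant $1$ is preserved and $e^{-t\Delta}f\ge 0$ for $f\ge 0$ by the Feynman--Kac or Beurling--Deny criteria), and a nonnegative solution of the heat equation that vanishes at one interior point must vanish identically, which is impossible since $\int_{\mathcal{M}}W(t,x,y)\,dy=1$.

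The quantitative Gaussian bounds are the heart of the matter, and I expect the short-time estimate to be the main obstacle. For $0<t\le 1$ I would construct the classical Minakshisundaram--Pleijel parametrix: in normal coordinates centered at $y$, set
\begin{equation*}
H_{K}(t,x,y)=(4\pi t)^{-d/2}e^{-|x-y|^{2}/4t}\sum_{k=0}^{K}u_{k}(x,y)\,t^{k},
\end{equation*}
with the $u_{k}$ smooth and determined by the transport equations, cut off to a fixed geodesic neighborhood of the diagonal. Then $(\partial_{t}+\Delta)H_{K}=(4\pi t)^{-d/2}e^{-|x-y|^{2}/4t}\,t^{K}\,r_{K}(t,x,y)$ with $r_{K}$ smooth, and Duhamel's formula $W=H_{K}-\int_{0}^{t}\!\int_{\mathcal{M}}W(t-s,\cdot,z)(\partial_{s}+\Delta_{z})H_{K}(s,z,y)\,dz\,ds$ gives, by choosing $K$ large and iterating, that $W$ differs from the leading Gaussian by terms that are smaller by powers of $t$. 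The Gaussian prefactor $(4\pi t)^{-d/2}e^{-|x-y|^{2}/4t}$ is bounded by $c\,t^{-d/2}(1+|x-y|/\sqrt{t})^{-n}$ for every $n$ (since $e^{-s^{2}/4}\le c_{n}(1+s)^{-n}$), and differentiating $m$ times in $x$ brings down at most $m$ factors of $t^{-1/2}$ times polynomials in $|x-y|/\sqrt t$, which are absorbed by enlarging $n$; this yields the first displayed inequality. Away from a neighborhood of the diagonal one instead uses the crude spectral bound together with finite propagation speed / off-diagonal decay of the parametrix to see $W$ and all its derivatives are bounded there, so no Gaussian factor is needed. For $1\le t<+\infty$ the estimate is elementary: write $W(t,x,y)=\sum_{\lambda}e^{-\lambda^{2}t}\varphi_{\lambda}(x)\overline{\varphi_{\lambda}(y)}$ and $\nabla^{m}W$ similarly; using $e^{-\lambda^{2}t}\le e^{-\lambda^{2}/2}e^{-\lambda^{2}(t-1)/2}\le e^{-\lambda^{2}/2}$ for $t\ge1$, the series $\sum_{\lambda}e^{-\lambda^{2}/2}\|\nabla^{m}\varphi_{\lambda}\|_{\infty}^{2}$ converges by Weyl's law together with the sup-norm/derivative bounds $\|\nabla^{m}\varphi_{\lambda}\|_{\infty}\le c(1+\lambda)^{m+d/2}$ (Sobolev embedding applied to the eigenfunction equation), giving a constant bound independent of $t$. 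Patching the two regimes at $t=1$ completes the proof, and the main subtlety to get right is the bookkeeping of $x$-derivatives against the Gaussian in the small-time parametrix expansion.
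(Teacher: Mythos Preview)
Your proposal is correct and follows essentially the same route as the paper: positivity via the parabolic maximum principle, the short-time Gaussian/derivative bounds via the Minakshisundaram--Pleijel parametrix and Duhamel, and the large-time bound by an elementary spectral argument. The paper's proof is in fact sketchier than yours---it first illustrates everything on the torus via Poisson summation, then for a general manifold simply invokes the Minakshisundaram--Pleijel expansion (citing Chavel) and states that the size and derivative estimates follow---so your more explicit bookkeeping of the Duhamel remainder and of the large-time series $\sum_{\lambda}e^{-\lambda^{2}/2}\|\nabla^{m}\varphi_{\lambda}\|_{\infty}\|\varphi_{\lambda}\|_{\infty}$ is a welcome amplification rather than a departure.
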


\begin{proof}
All of this is well known. The idea is that heat has essentially a finite
speed of propagation and diffusion in manifolds is comparable to diffusion
in Euclidean spaces, at least for small times. The heat kernel in the
Euclidean space $\mathbb{R}^{d}$ is a Gaussian,
\begin{gather*}
W\left( t,x,y\right) =\int_{\mathbb{R}^{d}}\exp \left( -4\pi
^{2}t\left\vert \xi \right\vert ^{2}\right) \exp \left( 2\pi i\left(
x-y\right) \xi \right) d\xi  \\
=\left( 4\pi t\right) ^{-d/2}\exp \left( -\left\vert x-y\right\vert
^{2}/4t\right) .
\end{gather*}%
By the Poisson summation formula, the heat kernel on the torus $\mathbb{T}%
^{d}=\mathbb{R}^{d}/\mathbb{Z}^{d}$ is the periodized of the kernel in the
space,
\begin{gather*}
\sum_{k\in \mathbb{Z}^{d}}\exp \left( -4\pi ^{2}\left\vert k\right\vert
^{2}t\right) \exp \left( 2\pi ik\left( x-y\right) \right)  \\
=\sum_{k\in \mathbb{Z}^{d}}\left( 4\pi t\right) ^{-d/2}\exp \left(
-\left\vert x-y-k\right\vert ^{2}/4t\right) .
\end{gather*}%
When $x$ is close to $y$ and $t$ is small, the main contribution to the sum
comes from the term with $k=0$,
\begin{equation*}
W\left( t,x,y\right) \approx \left( 4\pi t\right) ^{-d/2}\exp \left(
-\left\vert x-y\right\vert ^{2}/4t\right) .
\end{equation*}%
The remainder gives a bounded contribution,
\begin{equation*}
\left\vert \sum_{k\in \mathbb{Z}^{d}-\left\{ 0\right\} }\left( 4\pi
t\right) ^{-d/2}\exp \left( -\left\vert x-y-k\right\vert ^{2}/4t\right)
\right\vert \leq c.
\end{equation*}%
Analogous estimates hold for the derivatives. This proves the lemma for the
torus. The heat kernel on a compact manifold is similar, in particular it
has an asymptotic expansion with euclidean main term. See e.g. \cite{Chavel}%
, Chapter VI. More precisely, by the Minakshisundaram Pleijel recursion
formulas, there exist smooth functions $\left\{ u_{k}\left( x,y\right)
\right\} $ such that, if $t$ is small and $\left\vert x-y\right\vert $
denotes the distance between $x$ and $y$,
\begin{equation*}
W\left( t,x,y\right) \approx (4\pi t)^{-d/2}\exp \left( -\left\vert
x-y\right\vert ^{2}/4t\right) \sum_{k=0}^{n}t^{k}u_{k}\left( x,y\right)
+O\left( t^{n+1}\right) .
\end{equation*}%
On the contrary, $W\left( t,x,y\right) \rightarrow 1$ when $t\rightarrow
+\infty $. The estimates on the size of this kernel and its derivatives are
a consequence of this asymptotic expansion. The positivity $W\left(
t,x,y\right) >0$ is a consequence of the maximum principle for heat equation
and the symmetry $W\left( t,x,y\right) =W\left( t,y,x\right) $ follows from
this positivity and the eigenfunction expansion.\
\end{proof}

\begin{lem}
\textbf{\label{L4}} (1) The Bessel kernel $B^{\alpha }(x,y)$\ with $\alpha
>0 $\ is a superposition of heat kernels $W\left( t,x,y\right) $:
\begin{equation*}
B^{\alpha }(x,y)=\Gamma \left( \alpha /2\right) ^{-1}\int_{0}^{+\infty
}t^{\alpha /2-1}\exp \left( -t\right) W\left( t,x,y\right) dt.
\end{equation*}

(2) The Bessel kernel $B^{\alpha }(x,y)$\ with $\alpha >0$\ is real and
positive for every $x,y\in \mathcal{M}$, and it is smooth in $\left\{ x\neq y\right\} $%
. Moreover, for suitable constants $0<a<b$,
\begin{gather*}
a\left\vert x-y\right\vert ^{\alpha -d}\leq B^{\alpha }(x,y)\leq b\left\vert
x-y\right\vert ^{\alpha -d}\;\;\;\text{\textit{if }}0<\alpha <d\text{\textit{%
,}} \\
a\log \left( 1+\left\vert x-y\right\vert ^{-1}\right) \leq B^{\alpha
}(x,y)\leq b\log \left( 1+\left\vert x-y\right\vert ^{-1}\right) \;\;\;\text{%
\textit{if }}\alpha =d\text{\textit{,}} \\
a\leq B^{\alpha }(x,y)\leq b\;\;\;\text{\textit{if }}\alpha >d\text{\textit{.%
}}
\end{gather*}

(3) If $d<\alpha <d+1$, then $B^{\alpha }(x,y)$\ is H\"{o}lder continuous of
degree $\alpha -d$, that is there exists $c$\ such that for every $x,y,z\in
\mathcal{M} $,
\begin{equation*}
\left\vert B^{\alpha }(x,y)-B^{\alpha }(x,z)\right\vert \leq c\left\vert
y-z\right\vert ^{\alpha -d}.
\end{equation*}

(4) If $d<\alpha <d+2$, then there exists $c$\ such that for every $x,y\in \mathcal{M}$%
,
\begin{equation*}
\left\vert B^{\alpha }(x,x)-B^{\alpha }(x,y)\right\vert \leq c\left\vert
x-y\right\vert ^{\alpha -d}.
\end{equation*}
\end{lem}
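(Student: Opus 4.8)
The natural route is the subordination formula of part (1): since $\alpha>0$,
\[
B^{\alpha}(x,x)-B^{\alpha}(x,y)=\Gamma(\alpha/2)^{-1}\int_{0}^{+\infty}t^{\alpha/2-1}e^{-t}\bigl(W(t,x,x)-W(t,x,y)\bigr)\,dt .
\]
Write $\rho=|x-y|$. By part (2) the kernel $B^{\alpha}$ is bounded, so the inequality is trivial when $\rho$ is bounded below; thus we may assume $\rho\le 1$ and split the integral at $t=\rho^{2}$. For $0<t\le\rho^{2}$ one uses only the size estimate of Lemma \ref{L3} with $m=0$: both $W(t,x,x)$ and $W(t,x,y)$ are at most $ct^{-d/2}$, hence $|W(t,x,x)-W(t,x,y)|\le ct^{-d/2}$ and
\[
\int_{0}^{\rho^{2}}t^{\alpha/2-1}e^{-t}\,t^{-d/2}\,dt\le\int_{0}^{\rho^{2}}t^{(\alpha-d)/2-1}\,dt=\frac{2}{\alpha-d}\,\rho^{\alpha-d},
\]
where the hypothesis $\alpha>d$ is exactly what makes this integral converge.

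For $t\ge\rho^{2}$ one instead Taylor expands $y\mapsto W(t,x,y)$ about $y=x$ and controls the remainder by the derivative bounds of Lemma \ref{L3}, namely $|\nabla_{y}^{m}W(t,x,\cdot)|\le ct^{-(d+m)/2}$ for $0<t\le 1$ (and $\le c$ for $t\ge 1$). The point is that the zeroth and first order terms can be organised, against the diagonal value, so that the relevant difference is genuinely of \emph{second} order in $\rho$: one checks that $|W(t,x,x)-W(t,x,y)|\le c\rho^{2}t^{-(d+2)/2}$ for $\rho^{2}\le t\le 1$ and $\le c\rho^{2}$ for $t\ge 1$. Then
\[
\int_{\rho^{2}}^{1}t^{\alpha/2-1}\,c\rho^{2}t^{-(d+2)/2}\,dt=c\rho^{2}\int_{\rho^{2}}^{1}t^{(\alpha-d)/2-2}\,dt\le c'\rho^{2}\,(\rho^{2})^{(\alpha-d)/2-1}=c'\rho^{\alpha-d},
\]
since $\alpha<d+2$ forces the exponent $(\alpha-d)/2-2$ below $-1$, so the integral is dominated by its lower endpoint; and $\int_{1}^{+\infty}t^{\alpha/2-1}e^{-t}\,c\rho^{2}\,dt\le c\rho^{2}\le c\rho^{\alpha-d}$ because $\alpha-d<2$ and $\rho\le 1$. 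Adding the three pieces yields the bound $c\rho^{\alpha-d}$.

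The delicate point is precisely the estimate $|W(t,x,x)-W(t,x,y)|\le c\rho^{2}t^{-(d+2)/2}$ on the middle range $\rho^{2}\le t\le 1$ (and its analogue for $t\ge1$): a plain mean–value estimate only yields $c\rho\,t^{-(d+1)/2}$, whose integral against $t^{\alpha/2-1}$ is of the right size $\rho^{\alpha-d}$ only when $\alpha<d+1$ and is too large otherwise. To reach the whole range $d<\alpha<d+2$ one must exploit the extra cancellation at the diagonal; a convenient way is to pass through the nonnegative combination
\[
W(t,x,x)+W(t,y,y)-2W(t,x,y)=\sum_{\lambda}e^{-\lambda^{2}t}\,|\varphi_{\lambda}(x)-\varphi_{\lambda}(y)|^{2},
\]
whose gradient in $y$ vanishes at $y=x$, so that the $t\le 1$ bound improves to the quadratic $c\rho^{2}t^{-(d+2)/2}$. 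This is the step I would expect to take the most care; once it is in place, the remaining estimates are the routine power–function integrals written above.
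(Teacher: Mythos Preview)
Your argument is the paper's argument: subordination via part~(1), split at $t=\rho^{2}$ and $t=1$, the crude bound $|W(t,x,x)-W(t,x,y)|\le ct^{-d/2}$ on $0<t\le\rho^{2}$, and the quadratic estimate
\[
|W(t,x,x)-W(t,x,y)|\le
\begin{cases}
c\rho^{2}t^{-(d+2)/2}&\rho^{2}\le t\le1,\\
c\rho^{2}&t\ge1,
\end{cases}
\]
on the rest. The paper states this last estimate by simply remarking that ``$W(t,x,y)$ is stationary at $x=y$'' and then integrating exactly as you do.

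One caveat about your proposed justification of the quadratic estimate. The symmetrized quantity $W(t,x,x)+W(t,y,y)-2W(t,x,y)=\sum_{\lambda}e^{-\lambda^{2}t}|\varphi_{\lambda}(x)-\varphi_{\lambda}(y)|^{2}$ does have vanishing $y$-gradient at $y=x$ (this follows from $W(t,a,b)=W(t,b,a)$ alone), so Lemma~\ref{L3} with $m=2$ gives the quadratic bound for \emph{that} combination. But
\[
2\bigl(W(t,x,x)-W(t,x,y)\bigr)=\bigl(W(t,x,x)+W(t,y,y)-2W(t,x,y)\bigr)+\bigl(W(t,x,x)-W(t,y,y)\bigr),
\]
and for the residual $W(t,x,x)-W(t,y,y)$ the bare $m=1$ estimate of Lemma~\ref{L3} only yields $c\rho\,t^{-(d+1)/2}$, which---as you yourself note---is exactly the first-order bound that fails for $d+1<\alpha<d+2$. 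So the symmetrization by itself does not deliver the pointwise estimate you wrote down. On a homogeneous space the residual vanishes (the on-diagonal value $W(t,x,x)$ is independent of $x$); on a general manifold one needs a further structural input, namely that the on-diagonal function $x\mapsto W(t,x,x)$ varies only at order $t^{1-d/2}$ rather than $t^{-(d+1)/2}$, because the leading Minakshisundaram--Pleijel coefficient $u_{0}(x,x)=1$ is constant. The paper's direct appeal to stationarity of $y\mapsto W(t,x,y)$ at $y=x$ is drawing on the same structural fact (in normal coordinates the first derivatives of the metric vanish at the origin, so $\nabla_{y}u_{0}(x,y)\big|_{y=x}=0$). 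Your route is therefore correct in spirit and identical in its integration steps, but the symmetrization alone leaves this diagonal residual unaccounted for.
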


\begin{proof}
When the manifold is a torus and the eigenfunctions are exponentials the
proof is elementary. The Bessel kernel in the torus $\mathbb{T}^{d}$ is an
even function and it is sum of cosines,
\begin{gather*}
B^{\alpha }(x,y)=\sum_{k\in \mathbb{Z}^{d}}\left( 1+4\pi ^{2}\left\vert
k\right\vert ^{2}\right) ^{-\alpha /2}\exp \left( 2\pi ikx\right) \exp
\left( -2\pi iky\right) \\
=\sum_{k\in \mathbb{Z}^{d}}\left( 1+4\pi ^{2}\left\vert k\right\vert
^{2}\right) ^{-\alpha /2}\cos \left( 2\pi k\left( x-y\right) \right) .
\end{gather*}%
Hence,
\begin{gather*}
B^{\alpha }(x,x)-B^{\alpha }(x,y)=2\sum_{k\in \mathbb{Z}^{d}}\left( 1+4\pi
^{2}\left\vert k\right\vert ^{2}\right) ^{-\alpha /2}\sin ^{2}\left( \pi
k\left( x-y\right) \right) \\
\leq 2\pi ^{2}\left\vert x-y\right\vert ^{2}\sum_{\left\vert k\right\vert
\leq \left\vert x-y\right\vert ^{-1}}\left\vert k\right\vert ^{2}\left(
1+4\pi ^{2}\left\vert k\right\vert ^{2}\right) ^{-\alpha
/2}+2\sum_{\left\vert k\right\vert >\left\vert x-y\right\vert ^{-1}}\left(
1+4\pi ^{2}\left\vert k\right\vert ^{2}\right) ^{-\alpha /2} \\
\leq \left\{
\begin{array}{l}
c\left\vert x-y\right\vert ^{\alpha -d}\;\;\;\text{if }d<\alpha <d+2\text{,}
\\
c\left\vert x-y\right\vert ^{2}\log \left( 1+\left\vert x-y\right\vert
^{-1}\right) \;\;\;\text{if }\alpha =d+2\text{,} \\
c\left\vert x-y\right\vert ^{2}\;\;\;\text{if }\alpha >d+2\text{.}%
\end{array}%
\right.
\end{gather*}%
Also observe that the series which defines $B^{\alpha }(x,x)-B^{\alpha
}(x,y) $ has positive terms and the above inequalities can be reversed. This
proves (4) for a torus, and the proof of (3) and (2) is similar. A proof for
a generic manifold follows from the representation of Bessel kernels as
superposition of heat kernels and the estimates in the previous lemma. In
particular, (1) follows from the identity for the Gamma function
\begin{equation*}
\left( 1+\lambda ^{2}\right) ^{-\alpha /2}=\Gamma \left( \alpha /2\right)
^{-1}\int_{0}^{+\infty }t^{\alpha /2-1}\exp \left( -t\left( 1+\lambda
^{2}\right) \right) dt.
\end{equation*}%
By Lemma \ref{L3}, for every $n$,
\begin{equation*}
0<W\left( t,x,y\right) \leq \left\{
\begin{array}{l}
ct^{\left( n-d\right) /2}\left\vert x-y\right\vert ^{-n}\;\;\;\text{if }%
0<t\leq \left\vert x-y\right\vert ^{2}\text{,} \\
ct^{-d/2}\;\;\;\text{if }\left\vert x-y\right\vert ^{2}\leq t\leq 1\text{,}
\\
c\;\;\;\text{if }t\geq 1\text{.}%
\end{array}%
\right.
\end{equation*}%
Hence, if $0<\alpha <d$ and $n>d-\alpha $,
\begin{gather*}
B^{\alpha }(x,y)=\Gamma \left( \alpha /2\right) ^{-1}\int_{0}^{+\infty
}t^{\alpha /2-1}\exp \left( -t\right) W\left( t,x,y\right) dt \\
\leq c\left\vert x-y\right\vert ^{-n}\int_{0}^{\left\vert x-y\right\vert
^{2}}t^{\left( \alpha +n-d\right) /2-1}dt+c\int_{\left\vert x-y\right\vert
^{2}}^{1}t^{\left( \alpha -d\right) /2-1}dt+\int_{1}^{+\infty }t^{\alpha
/2-1}\exp \left( -t\right) dt \\
\leq c\left\vert x-y\right\vert ^{\alpha -d}.
\end{gather*}%
Indeed one can easily see that these inequalities can be reversed. Hence $%
B^{\alpha }(x,y)\approx c\left\vert x-y\right\vert ^{\alpha -d}$. This
proves (2) when $0<\alpha <d$, and the proofs of the cases $\alpha =d$ and $%
\alpha >d$ are similar. Also the proof of (3) is similar. Write
\begin{gather*}
B^{\alpha }(x,y)-B^{\alpha }(x,z) \\
=\Gamma \left( \alpha /2\right) ^{-1}\int_{0}^{+\infty }t^{\alpha /2-1}\exp
\left( -t\right) \left( W\left( t,x,y\right) -W\left( t,x,z\right) \right)
dt.
\end{gather*}%
Then recall that, by Lemma \ref{L3},
\begin{equation*}
\left\vert W\left( t,x,y\right) -W\left( t,x,z\right) \right\vert \leq
\left\{
\begin{array}{l}
ct^{-d/2}\;\;\;\text{if }0<t\leq \left\vert y-z\right\vert ^{2}\text{,} \\
ct^{-(d+1)/2}\left\vert y-z\right\vert \;\;\;\text{if }\left\vert
y-z\right\vert ^{2}\leq t\leq 1\text{,} \\
c\left\vert y-z\right\vert \;\;\;\text{if }t\geq 1\text{.}%
\end{array}%
\right.
\end{equation*}%
Hence,
\begin{gather*}
\left\vert B^{\alpha }(x,y)-B^{\alpha }(x,z)\right\vert \leq
c\int_{0}^{\left\vert y-z\right\vert ^{2}}t^{(\alpha -d)/2-1}\exp \left(
-t\right) dt \\
+c\left\vert y-z\right\vert \int_{\left\vert y-z\right\vert
^{2}}^{1}t^{(\alpha -d-1)/2-1}\exp \left( -t\right) dt+c\left\vert
y-z\right\vert \int_{1}^{+\infty }t^{\alpha /2-1}\exp \left( -t\right) dt \\
\leq c\left\vert y-z\right\vert ^{\alpha -d}.
\end{gather*}%
Finally, the estimate for $\left\vert B^{\alpha }(x,x)-B^{\alpha
}(x,y)\right\vert $ in (4) is analogous to the previous one, but it holds in
a larger range of $\alpha $. It suffices to observe that $W\left(
t,x,y\right) $ is stationary at $x=y$ and it satisfies the estimates
\begin{equation*}
\left\vert W\left( t,x,x\right) -W\left( t,x,y\right) \right\vert \leq
\left\{
\begin{array}{l}
ct^{-d/2}\;\;\;\text{if }0<t\leq \left\vert x-y\right\vert ^{2}\text{,} \\
ct^{-d/2-1}\left\vert x-y\right\vert ^{2}\;\;\;\text{if }\left\vert
x-y\right\vert ^{2}\leq t\leq 1\text{,} \\
c\left\vert x-y\right\vert ^{2}\;\;\;\text{if }t\geq 1\text{.\ }%
\end{array}%
\right.
\end{equation*}
\end{proof}

The following is Result (1) in the Introduction.

\begin{thm}
\label{T5}For every $d/2<\alpha <d/2+1$\ there exists $c>0$\ with the
following property: If $\mathcal{M}=U_{1}\cup U_{2}\cup ...\cup U_{N}$\ is a
decomposition of $\mathcal{M}$\ in disjoint pieces with measure $\left\vert
U_{j}\right\vert =\omega _{j}$, then there exists a distribution of points $%
\left\{ z_{j}\right\} _{j=1}^{N}$\ with $z_{j}\in U_{j}$\ such that
\begin{equation*}
\left\vert \sum_{j=1}^{N}\omega _{j}f\left( z_{j}\right) -\int_{\mathcal{M}%
}f(x)dx\right\vert \leq c\max_{1\leq j\leq N}\left\{ \mathrm{diameter}%
\left( U_{j}\right) ^{\alpha }\right\} \left\Vert f\right\Vert _{\mathbb{W}%
^{\alpha ,2}\left( \mathcal{M}\right) }.
\end{equation*}
\end{thm}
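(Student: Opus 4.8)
The plan is to apply Corollary \ref{C2} to the measure $d\mu(x)=\sum_{j=1}^{N}\omega_j\delta_{z_j}(x)-dx$, which equals $d\nu(x)-dx$ with $d\nu=\sum_j\omega_j\delta_{z_j}$ a probability measure since $\sum_j\omega_j=|\mathcal M|=1$. Because $\alpha>d/2$ forces functions in $\mathbb W^{\alpha,2}(\mathcal M)$ to be continuous, part (3) of that corollary applies and reduces the theorem to the following claim: one can choose $z_j\in U_j$ so that the energy
\[
\int_{\mathcal M}\int_{\mathcal M}B^{2\alpha}(x,y)\,d\nu(x)\,d\nu(y)-1=\sum_{i,j=1}^{N}\omega_i\omega_j B^{2\alpha}(z_i,z_j)-1
\]
is at most $c\max_{1\le j\le N}\{\mathrm{diameter}(U_j)^{2\alpha}\}$; the statement then follows with $c$ replaced by its square root. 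We may assume every $\omega_j>0$.

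To produce such points I would average over independent choices of $z_j$ in $U_j$, each distributed according to the normalized measure $\omega_j^{-1}dx$ restricted to $U_j$. Since $2\alpha>d$, the kernel $B^{2\alpha}$ is bounded by Lemma \ref{L4}(2), so all integrals below converge. By independence, the average of the term $\omega_i\omega_jB^{2\alpha}(z_i,z_j)$ with $i\ne j$ is exactly $\int_{U_i}\int_{U_j}B^{2\alpha}(x,y)\,dx\,dy$, while the average of the diagonal term $\omega_i^2B^{2\alpha}(z_i,z_i)$ is $\omega_i\int_{U_i}B^{2\alpha}(x,x)\,dx$. On the other hand $\int_{\mathcal M}\int_{\mathcal M}B^{2\alpha}(x,y)\,dx\,dy=1$, because only the eigenvalue $\lambda=0$ contributes; writing this double integral as $\sum_{i,j}\int_{U_i}\int_{U_j}B^{2\alpha}$, the off-diagonal averages cancel against the corresponding pieces of the $1$, leaving
\[
\text{(average energy)}=\sum_{i=1}^{N}\Big(\omega_i\int_{U_i}B^{2\alpha}(x,x)\,dx-\int_{U_i}\int_{U_i}B^{2\alpha}(x,y)\,dx\,dy\Big)=\sum_{i=1}^{N}\int_{U_i}\int_{U_i}\big(B^{2\alpha}(x,x)-B^{2\alpha}(x,y)\big)\,dx\,dy.
\]

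Here is where the hypothesis on $\alpha$ enters: since $d<2\alpha<d+2$, which is precisely $d/2<\alpha<d/2+1$, Lemma \ref{L4}(4) gives $|B^{2\alpha}(x,x)-B^{2\alpha}(x,y)|\le c|x-y|^{2\alpha-d}\le c\,\mathrm{diameter}(U_i)^{2\alpha-d}$ for $x,y\in U_i$. Hence the average energy is at most $c\sum_i\omega_i^2\,\mathrm{diameter}(U_i)^{2\alpha-d}$. Since $U_i$ lies in a geodesic ball of radius $\mathrm{diameter}(U_i)$, and such balls have volume at most $c\,\mathrm{diameter}(U_i)^d$ uniformly in the radius, one has $\omega_i=|U_i|\le c\,\mathrm{diameter}(U_i)^d$, so $\omega_i^2\le c\,\omega_i\,\mathrm{diameter}(U_i)^d$ and
\[
\text{(average energy)}\le c\sum_{i=1}^{N}\omega_i\,\mathrm{diameter}(U_i)^{2\alpha}\le c\Big(\max_{1\le i\le N}\mathrm{diameter}(U_i)^{2\alpha}\Big)\sum_{i=1}^{N}\omega_i=c\max_{1\le i\le N}\mathrm{diameter}(U_i)^{2\alpha}.
\]
Therefore some admissible choice of $\{z_j\}$ gives an energy no larger than this average, and that choice proves the theorem. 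The only genuine obstacle is the diagonal estimate: the off-diagonal terms cancel for free in the average, so the whole argument rests on the Hölder regularity of $B^{2\alpha}$ along its diagonal, and the constraint $2\alpha<d+2$ is exactly what makes that regularity available.
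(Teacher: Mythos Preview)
Your proof is correct and follows essentially the same route as the paper: apply Corollary~\ref{C2}(3), average the energy $\sum_{i,j}\omega_i\omega_jB^{2\alpha}(z_i,z_j)-1$ over independent uniform choices $z_j\in U_j$, observe that the off-diagonal terms cancel against the decomposition of $\int\int B^{2\alpha}=1$, and bound the remaining diagonal sum via Lemma~\ref{L4}(4) together with $|U_j|\le c\,\mathrm{diameter}(U_j)^d$. The paper's argument is identical in structure and detail.
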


\begin{proof}
By Corollary \ref{C2} (3), with $d\nu (x)=\sum_{j=1}^{N}\omega _{j}\delta
_{z_{j}}(x)$,
\begin{equation*}
\left\vert \int_{\mathcal{M}}f(x)d\nu (x)-\int_{\mathcal{M}%
}f(x)dx\right\vert \leq \left\{ \sum_{i=1}^{N}\sum_{j=1}^{N}\omega
_{i}\omega _{j}B^{2\alpha }\left( z_{i},z_{j}\right) -1\right\}
^{1/2}\left\Vert f\right\Vert _{\mathbb{W}^{\alpha ,2}}.
\end{equation*}%
It suffices to compute the average value of $\sum_{i=1}^{N}\sum_{j=1}^{N}%
\omega _{i}\omega _{j}B^{2\alpha }\left( z_{i},z_{j}\right) -1$ on $%
U_{1}\times U_{2}\times ...\times U_{N}$ with respect to the probability
measures $\omega _{j}^{-1}dz_{j}$ uniformly distributed on $U_{j}$. First
observe that
\begin{gather*}
\left( \prod_{k=1}^{N}\omega _{k}^{-1}\right)
\int_{U_{1}}...\int_{U_{N}}dz_{1}...dz_{N}=1, \\
1=\int_{\mathcal{M}}\int_{\mathcal{M}}B^{2\alpha }\left( x,y\right)
dxdy=\sum_{i=1}^{N}\sum_{j=1}^{N}\int_{U_{i}}\int_{U_{j}}B^{2\alpha
}\left( x,y\right) dxdy.
\end{gather*}%
Then,
\begin{gather*}
\left( \prod_{k=1}^{N}\omega _{k}^{-1}\right)
\int_{U_{1}}...\int_{U_{N}}\left( \sum_{i=1}^{N}\sum_{j=1}^{N}\omega
_{i}\omega _{j}B^{2\alpha }\left( z_{i},z_{j}\right) -1\right)
dz_{1}...dz_{N} \\
=\sum_{j}\omega _{j}\int_{U_{j}}B^{2\alpha }\left( z_{j},z_{j}\right)
dz_{j}+\underset{i\not=j}{\sum \sum }\int_{U_{i}}\int_{U_{j}}B^{2\alpha
}\left( z_{i},z_{j}\right) dz_{i}dz_{j} \\
-\sum_{j}\int_{U_{j}}\int_{U_{j}}B^{2\alpha }\left( x,y\right) dxdy-%
\underset{i\not=j}{\sum \sum }\int_{U_{i}}\int_{U_{j}}B^{2\alpha }\left(
x,y\right) dxdy \\
=\sum_{j=1}^{N}\int_{U_{j}}\int_{U_{j}}\left( B^{2\alpha }\left(
x,x\right) -B^{2\alpha }\left( x,y\right) \right) dxdy.
\end{gather*}%
Since, by Lemma \ref{L4} (4), $\left\vert B^{2\alpha }\left( x,x\right)
-B^{2\alpha }\left( x,y\right) \right\vert \leq c\left\vert x-y\right\vert
^{2\alpha -d}$ when $d<2\alpha <d+2$, and since $\omega _{j}=\left\vert
U_{j}\right\vert \leq c\,\mathrm{diameter}\left( U_{j}\right) ^{d}$,
\begin{gather*}
\sum_{j=1}^{N}\int_{U_{j}}\int_{U_{j}}\left\vert B^{2\alpha }\left(
x,x\right) -B^{2\alpha }\left( x,y\right) \right\vert dxdy \\
\leq \sum_{j=1}^{N}\left\vert U_{j}\right\vert ^{2}\sup \left\{ \left\vert
B^{2\alpha }\left( x,x\right) -B^{2\alpha }\left( x,y\right) \right\vert
,\;x,y\in U_{j}\right\} \\
\leq c\sum_{j=1}^{N}\left\vert U_{j}\right\vert ^{2}\mathrm{diameter}%
\left( U_{j}\right) ^{2\alpha -d}\leq c\sum_{j=1}^{N}\left\vert
U_{j}\right\vert \mathrm{diameter}\left( U_{j}\right) ^{2\alpha }.\;
\end{gather*}
\end{proof}

For the next result we shall need estimates for partial sums of Fourier
expansions of the Bessel kernels.

\begin{lem}
\textbf{\label{L6}}\textit{\ }Let $\chi \left( \lambda \right) $\ be an even
smooth function on $-\infty <\lambda <+\infty $\ with support in $1/2\leq
\left\vert \lambda \right\vert \leq 2$\ and let
\begin{equation*}
P^{\alpha }(r,x,y)=\sum_{\lambda }\chi \left( \lambda /r\right) \left(
1+\lambda ^{2}\right) ^{-\alpha /2}\varphi _{\lambda }(x)\overline{\varphi
_{\lambda }(y)}.
\end{equation*}%
Then for every $n>0$\ there exists $c$\ such that for every $r>1$\ and $%
x,y\in M$,
\begin{equation*}
\left\vert P^{\alpha }(r,x,y)\right\vert \leq cr^{d-\alpha }\left(
1+r\left\vert x-y\right\vert \right) ^{-n}.
\end{equation*}
\end{lem}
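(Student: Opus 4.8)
The plan is to estimate $P^{\alpha}(r,x,y)$ by writing it as a superposition of heat kernels, exactly as in Lemma~\ref{L4}, but now with the spectral cutoff $\chi(\lambda/r)$ inserted. The key observation is that the factor $(1+\lambda^2)^{-\alpha/2}$ can be reproduced through the Gamma function identity, so that
\begin{equation*}
P^{\alpha}(r,x,y)=\Gamma(\alpha/2)^{-1}\int_{0}^{+\infty}t^{\alpha/2-1}\exp(-t)\,K_{r}(t,x,y)\,dt,
\end{equation*}
where $K_{r}(t,x,y)=\sum_{\lambda}\chi(\lambda/r)\exp(-\lambda^{2}t)\varphi_{\lambda}(x)\overline{\varphi_{\lambda}(y)}$ is a ``truncated heat kernel''. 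Since $\chi$ is supported in $1/2\le|\lambda|\le 2$, only eigenvalues with $\lambda\approx r$ contribute, so we should expect $K_{r}$ to behave like $e^{-r^{2}t}$ times a rescaled Euclidean-type kernel, and the $t$-integral then produces the gain $r^{-\alpha}$ relative to the $r^{d}$ coming from the number of eigenvalues and the rapid spatial decay $(1+r|x-y|)^{-n}$.

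First I would establish the pointwise bound for $K_{r}(t,x,y)$ itself. The natural way is to realize $K_{r}$ as $\Phi_{r}(\sqrt{\Delta})$ applied to the appropriate factor, i.e.\ to use the finite speed of propagation of the wave equation $\cos(s\sqrt{\Delta})$ on a compact manifold together with the Fourier representation $\exp(-\lambda^{2}t)\chi(\lambda/r)=\int_{\mathbb{R}}\widehat{g}_{r,t}(s)\cos(s\lambda)\,ds$ for a suitable $g_{r,t}$. Because $\chi$ is smooth and compactly supported away from $0$, integration by parts in $s$ shows that $\widehat{g}_{r,t}$ decays rapidly on the scale $1/r$, and the unit finite speed of propagation then localizes $K_{r}(t,x,y)$ to $|x-y|\lesssim s\lesssim$ (a few units), giving $|K_{r}(t,x,y)|\le c_{n,N}\,r^{d}e^{-c r^{2}t}(1+r|x-y|)^{-n}$ uniformly for $0<t\le 1$, with a similar (exponentially small in $r^2$) bound for $t\ge 1$. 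Alternatively, since the paper stresses it does not want to rely on special structure, one can instead appeal to the Minakshisundaram--Pleijel asymptotic expansion of $W(t,x,y)$ from Lemma~\ref{L3}, subtract off a finite Euclidean-model sum, and check on the torus (where $K_{r}$ is an explicit periodized Gaussian truncated to a dyadic annulus) that the claimed estimate holds, then transfer by the locality of the expansion.

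Once the bound on $K_{r}(t,x,y)$ is in hand, the conclusion is a routine one-variable integration. Plugging in and splitting at $t=r^{-2}$,
\begin{equation*}
|P^{\alpha}(r,x,y)|\le c\,r^{d}(1+r|x-y|)^{-n}\int_{0}^{+\infty}t^{\alpha/2-1}e^{-c r^{2}t}\,dt\le c\,r^{d}(1+r|x-y|)^{-n}\,r^{-\alpha}\Gamma(\alpha/2)/c^{\alpha/2},
\end{equation*}
which is exactly the asserted $cr^{d-\alpha}(1+r|x-y|)^{-n}$; the contribution of $t\ge 1$ is $O(e^{-cr^{2}})$ and harmless since $r>1$. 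The main obstacle is the first step, namely proving the sharp bound on the truncated heat (or equivalently the localized spectral) kernel $K_{r}(t,x,y)$: one must control a function of $\sqrt{\Delta}$ whose symbol lives in a dyadic annulus, and extracting both the correct power $r^{d}$ and the rapid decay $(1+r|x-y|)^{-n}$ with constants uniform in $t$ requires either the finite-propagation-speed machinery for the wave operator or a careful use of the heat-kernel parametrix; everything after that is bookkeeping with the Gamma integral.
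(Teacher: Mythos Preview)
Your argument is correct in outline but takes a different route from the paper, and the detour does not actually buy you anything. The paper works directly with the Euclidean analogue
\[
Q(r,x-y)=\int_{\mathbb{R}^{d}}\chi(2\pi|\xi|/r)(1+4\pi^{2}|\xi|^{2})^{-\alpha/2}e^{2\pi i(x-y)\cdot\xi}\,d\xi,
\]
gets the bound $cr^{d-\alpha}$ trivially and the decay $cr^{d-\alpha}(r|x-y|)^{-2n}$ by integrating by parts $n$ times against $\Delta_{\xi}$, then transfers to the torus via Poisson summation and to a general manifold by citing pseudodifferential transference (Taylor, Brandolini--Colzani). No heat kernel appears.

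Your plan introduces $K_{r}(t,x,y)=\sum_{\lambda}\chi(\lambda/r)e^{-\lambda^{2}t}\varphi_{\lambda}(x)\overline{\varphi_{\lambda}(y)}$ and then integrates in $t$; the final Gamma computation is fine. The issue is that the bound you need on $K_{r}$, namely $|K_{r}(t,x,y)|\le c_{n}r^{d}e^{-cr^{2}t}(1+r|x-y|)^{-n}$ uniformly in $t$, is \emph{not} easier than the bound on $P^{\alpha}$ itself. The symbol $\chi(\lambda/r)(1+\lambda^{2})^{-\alpha/2}$ is already a smooth bump on the annulus $\lambda\sim r$ with $|\partial_{\lambda}^{k}|\le C_{k}r^{-\alpha-k}$, so whatever machinery you invoke for $K_{r}$ (finite propagation of $\cos(s\sqrt{\Delta})$, or a parametrix) applies verbatim to $P^{\alpha}$ and gives the lemma in one stroke. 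The Gamma-integral trick was genuinely useful in Lemma~\ref{L4} because there the full Bessel kernel is singular on the diagonal and the heat kernel regularizes it; here $P^{\alpha}$ is a finite sum and the trick only adds a parameter. Note also that your second alternative, using the Minakshisundaram--Pleijel expansion of $W(t,x,y)$ from Lemma~\ref{L3}, does not give $K_{r}$ directly: $K_{r}$ is a spectral truncation of $W$, and passing from the asymptotics of $W$ to those of $K_{r}$ is exactly the localized spectral multiplier estimate you are trying to prove.
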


\begin{proof}
The numerology behind this estimate is quite simple. The approximation of
the Bessel kernel $B^{\alpha }(x,y)$ by linear combinations of
eigenfunctions with eigenvalues $\lambda ^{2}<r^{2}$ is localized and only
points $x$ and $y$ with $\left\vert x-y\right\vert \leq 1/r$ really matter.
In particular, since $B^{\alpha }(x,y)$ is smooth away from the diagonal, at
distance $\left\vert x-y\right\vert \leq 1/r$ the approximation is rough,
but at distance $\left\vert x-y\right\vert \geq 1/r$ it is quite good. The
analogue of $P^{\alpha }(r,x,y)$ in the Euclidean setting is the kernel
\begin{gather*}
Q\left( r,x-y\right) =\int_{\mathbb{R}^{d}}\chi \left( 2\pi \left\vert \xi
\right\vert /r\right) \left( 1+4\pi ^{2}\left\vert \xi \right\vert
^{2}\right) ^{-\alpha /2}\exp \left( 2\pi i\left( x-y\right) \xi \right)
d\xi  \\
=r^{d}\int_{\mathbb{R}^{d}}\chi \left( 2\pi \left\vert \xi \right\vert
\right) \left( 1+4\pi ^{2}r^{2}\left\vert \xi \right\vert ^{2}\right)
^{-\alpha /2}\exp \left( 2\pi ir\left( x-y\right) \xi \right) d\xi .
\end{gather*}%
Since $\chi \left( 2\pi \left\vert \xi \right\vert \right) $ has support in $%
1/2\leq 2\pi \left\vert \xi \right\vert \leq 2$, for every $r$ and $x,y\in
\mathbb{R}^{d}$ one has
\begin{gather*}
\left\vert r^{d}\int_{\mathbb{R}^{d}}\chi \left( 2\pi \left\vert \xi
\right\vert \right) \left( 1+4\pi ^{2}r^{2}\left\vert \xi \right\vert
^{2}\right) ^{-\alpha /2}\exp \left( 2\pi ir\left( x-y\right) \xi \right)
d\xi \right\vert  \\
\leq r^{d-\alpha }\int_{\mathbb{R}^{d}}\left( 2\pi \left\vert \xi
\right\vert \right) ^{-\alpha }\left\vert \chi \left( 2\pi \left\vert \xi
\right\vert \right) \right\vert d\xi \leq cr^{d-\alpha }.
\end{gather*}%
This estimate can be improved in the range $\left\vert x-y\right\vert \geq
1/r$. Indeed, an integration by parts gives
\begin{gather*}
r^{d}\int_{\mathbb{R}^{d}}\chi \left( 2\pi \left\vert \xi \right\vert
\right) \left( 1+4\pi ^{2}r^{2}\left\vert \xi \right\vert ^{2}\right)
^{-\alpha /2}\exp \left( 2\pi ir\left( x-y\right) \xi \right) d\xi  \\
=r^{d}\int_{\mathbb{R}^{d}}\chi \left( 2\pi \left\vert \xi \right\vert
\right) \left( 1+4\pi ^{2}r^{2}\left\vert \xi \right\vert ^{2}\right)
^{-\alpha /2}\Delta _{\xi }^{n}\left( \left( 4\pi ^{2}r^{2}\left\vert
x-y\right\vert ^{2}\right) ^{-n}\exp \left( 2\pi ir\left( x-y\right) \xi
\right) \right) d\xi  \\
=r^{d}\left( 4\pi ^{2}r^{2}\left\vert x-y\right\vert ^{2}\right) ^{-n}\int_{%
\mathbb{R}^{d}}\exp \left( 2\pi ir\left( x-y\right) \xi \right) \Delta _{\xi
}^{n}\left( \chi \left( 2\pi \left\vert \xi \right\vert \right) \left(
1+4\pi ^{2}r^{2}\left\vert \xi \right\vert ^{2}\right) ^{-\alpha /2}\right)
d\xi .
\end{gather*}%
Hence,
\begin{gather*}
\left\vert r^{d}\int_{\mathbb{R}^{d}}\chi \left( 2\pi \left\vert \xi
\right\vert \right) \left( 1+4\pi ^{2}r^{2}\left\vert \xi \right\vert
^{2}\right) ^{-\alpha /2}\exp \left( 2\pi ir\left( x-y\right) \xi \right)
d\xi \right\vert  \\
\leq r^{d}\left( 4\pi ^{2}r^{2}\left\vert x-y\right\vert ^{2}\right)
^{-n}\int_{\mathbb{R}^{d}}\left\vert \Delta _{\xi }^{n}\left( \chi \left(
2\pi \left\vert \xi \right\vert \right) \left( 1+4\pi ^{2}r^{2}\left\vert
\xi \right\vert ^{2}\right) ^{-\alpha /2}\right) \right\vert d\xi  \\
\leq cr^{d-\alpha -2n}\left\vert x-y\right\vert ^{-2n}.
\end{gather*}%
Now it suffices to transfer these estimates from the Euclidean space to the
manifold. For the torus, this can be done via the Poisson summation formula.
If $Q\left( r,x-y\right) $ is the truncated Bessel kernel in $\mathbb{R}^{d}$
defined above, then the truncated Bessel kernel in $\mathbb{T}^{d}$ is
\begin{equation*}
\sum_{k\in \mathbb{Z}^{d}}\chi \left( 2\pi \left\vert k\right\vert
/r\right) \left( 1+4\pi ^{2}\left\vert h\right\vert ^{2}\right) ^{-\alpha
/2}\exp \left( 2\pi ik\left( x-y\right) \right) =\sum_{k\in \mathbb{Z}%
^{d}}Q\left( r,x-y+k\right) .
\end{equation*}%
When $\left\vert x_{j}-y_{j}\right\vert \leq 1/2$, the main term in the last
sum is the one with $k=0$, while the contribution of terms with $k\neq 0$ is
negligible,
\begin{gather*}
\left\vert Q\left( r,x-y\right) \right\vert \leq cr^{d-\alpha }\left(
1+r\left\vert x-y\right\vert \right) ^{-n}, \\
\sum_{k\in \mathbb{Z}^{d}-\left\{ 0\right\} }\left\vert Q\left(
r,x-y-k\right) \right\vert \leq cr^{d-\alpha -n}.
\end{gather*}%
Finally, the estimate for the truncated Bessel kernel on a generic manifold
can be obtained by transference from $\mathbb{R}^{d}$ via pseudodifferential
techniques. For more details, see e.g. \cite{Taylor} Chapter XII, or \cite%
{BrCo}.\
\end{proof}

The following is a result on the homogeneity of measures which appear in
quadrature rules and it gives sharp estimates of the discrepancy of such
measures. Similar estimates on spheres are in \cite{ABG}.

\begin{lem}
\label{L7}\ Assume that $d\nu (x)$\ is a probability measure on $\mathcal{M}$\ with
the property that for every eigenfunction $\varphi _{\lambda }(x)$\ with
eigenvalues $\lambda ^{2}<r^{2}$,
\begin{equation*}
\int_{\mathcal{M}}\varphi _{\lambda }(x)d\nu (x)=\int_{\mathcal{M}}\varphi
_{\lambda }(x)dx.
\end{equation*}%
Then for every $n$\ there exists $c$, which may depend on $n$\ and $\mathcal{M}$, but
is independent of $r$\ and $d\nu (x)$, such that for every measurable set $%
\Omega $\ in $\mathcal{M}$,
\begin{equation*}
\left\vert \int_{\Omega }d\nu (x)-\int_{\Omega }dx\right\vert \leq c\int_{%
\mathcal{M}}\left( 1+r\mathrm{distance}\left\{ x,\partial \Omega \right\}
\right) ^{-n}dx.
\end{equation*}%
In particular, the discrepancy between the measures $d\nu (x)$\ and $dx$\
with respect to balls $\left\{ \left\vert x-y\right\vert \leq s\right\} $\
is dominated by
\begin{equation*}
\left\vert \int_{\left\{ \left\vert x-y\right\vert \leq s\right\} }d\nu
(x)-\int_{\left\{ \left\vert x-y\right\vert \leq s\right\} }dx\right\vert
\leq \left\{
\begin{array}{l}
cr^{-d}\;\;\;\text{\textit{if }}s\leq 1/r\text{\textit{,}} \\
cr^{-1}s^{d-1}\;\;\;\text{\textit{if }}s\geq 1/r\text{\textit{.}}%
\end{array}%
\right.
\end{equation*}
\end{lem}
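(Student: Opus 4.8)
The plan is to use a smooth spectral decomposition of the indicator-type "error kernel" and to estimate each dyadic piece via Lemma~\ref{L6}. Fix a measurable set $\Omega$ and let $\chi_\Omega$ be its indicator function. The key observation is that, because $d\nu$ integrates exactly against every eigenfunction with $\lambda^2<r^2$, we have $\int_{\mathcal{M}}\varphi_\lambda(x)\,d\mu(x)=0$ for such $\lambda$, where $d\mu=d\nu-dx$; hence for any function $h$ whose Fourier expansion is supported on eigenvalues $\lambda\ge r$ one has $\int_\Omega d\mu=\int_{\mathcal{M}}(\chi_\Omega-h)\,d\mu$. Since $d\mu$ is a signed measure of bounded total variation (at most $2$), this gives $\left|\int_\Omega d\mu\right|\le 2\,\sup_{x\in\mathcal{M}}\left|\chi_\Omega(x)-h(x)\right|$ once $h$ is continuous, and more usefully we will compare $\chi_\Omega$ with a smoothed, high-pass-filtered version of itself.

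First I would set up a Littlewood--Paley partition $1=\eta_0(\lambda)+\sum_{k\ge 1}\chi(\lambda/2^k)$ with $\chi$ as in Lemma~\ref{L6} and $\eta_0$ supported in $|\lambda|\le 2$. Writing $K_k(x,y)=\sum_\lambda \chi(\lambda/2^k)\varphi_\lambda(x)\overline{\varphi_\lambda(y)}$ for the $k$-th Littlewood--Paley projection kernel (this is $P^0(2^k,x,y)$ in the notation of Lemma~\ref{L6}, taking $\alpha=0$ there), Lemma~\ref{L6} gives $|K_k(x,y)|\le c_n\,2^{kd}(1+2^k|x-y|)^{-n}$ for every $n$. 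The natural choice is $h=\sum_{k:\,2^k\ge r}K_k(\chi_\Omega)$, i.e.\ $h(x)=\int_{\mathcal{M}}G_r(x,y)\chi_\Omega(y)\,dy$ where $G_r=\sum_{2^k\ge r}K_k$. Because $G_r$ has spectrum in $\{\lambda\gtrsim r\}$, $\int_\Omega d\mu=\int_{\mathcal{M}}(\chi_\Omega-h)\,d\mu$; it remains to bound $\sup_x|\chi_\Omega(x)-h(x)|$ pointwise. At a point $x$ with $\mathrm{dist}(x,\partial\Omega)=\rho$, the ball $B(x,\rho/2)$ lies entirely inside $\Omega$ or entirely outside, so $\chi_\Omega$ is constant on it; combining $\sum_{2^k\ge r}K_k$ with the remaining low-frequency piece, $\sum_k\int K_k(x,y)\,dy=1$, one gets $\chi_\Omega(x)-h(x)=\sum_{2^k<r}K_k(\chi_\Omega)(x)-(\text{correction near }\partial\Omega)$; using the kernel bound and that $\chi_\Omega(x)=\chi_\Omega(y)$ for $|x-y|\le\rho/2$, each term is controlled by $c_n\int_{|x-y|>\rho/2}2^{kd}(1+2^k|x-y|)^{-n}dy\le c_n(1+2^k\rho)^{-(n-d-1)}$, and summing the geometric-type series over $k$ (dyadic, with ratio governed by $r\rho$) yields $|\chi_\Omega(x)-h(x)|\le c_n(1+r\rho)^{-n}$. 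Integrating this pointwise estimate against $dx$ — and, for the $d\nu$-part, using that $d\nu$ is a probability measure so its contribution is also dominated by the $dx$-integral after one more application of the spectral exactness in the reverse direction — gives exactly $\left|\int_\Omega d\mu\right|\le c\int_{\mathcal{M}}(1+r\,\mathrm{dist}(x,\partial\Omega))^{-n}\,dx$.

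The main obstacle is the careful bookkeeping of which frequencies are "cut off" and making the telescoping identity rigorous: one must ensure the low-frequency remainder $\chi_\Omega-h=\sum_{2^k<r}K_k(\chi_\Omega)$ (plus the $\eta_0$ term) is handled so that the $\chi_\Omega(x)=\chi_\Omega(y)$ trick applies, and that the resulting bound is uniform in $r$ and in $d\nu$. The cleanest route is probably to avoid Littlewood--Paley altogether and instead pick a single smooth function $\Phi$ on $\mathbb{R}$ with $\Phi\equiv 1$ near $0$, supported in $|\lambda|\le 1$, set $T_r(x,y)=\sum_\lambda \Phi(\lambda/r)\varphi_\lambda(x)\overline{\varphi_\lambda(y)}$, note $T_r(x,y)=\delta_x(y)-\sum_k P^0(2^k\!/c,x,y)$-type tail so that $|T_r(x,y)-\text{(diagonal)}|$ obeys $cr^d(1+r|x-y|)^{-n}$ off-diagonal while $\int T_r(x,y)dy=1$; then $\int_\Omega d\mu=\int\!\!\int T_r(x,y)\chi_\Omega(y)\,dy\,d\mu(x)$ because the complementary kernel $\delta-T_r$ has spectrum $\ge r$ and is annihilated by $d\mu$. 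The pointwise bound $\left|\int_{\mathcal M}T_r(x,y)\chi_\Omega(y)\,dy-\chi_\Omega(x)\right|\le c_n(1+r\,\mathrm{dist}(x,\partial\Omega))^{-n}$ follows from the off-diagonal decay exactly as above. Finally, the stated ball discrepancy estimates follow by plugging $\Omega=B(y,s)$ into the general bound: for $s\le 1/r$ one has $\mathrm{dist}(x,\partial\Omega)\le s\le 1/r$ on a set of measure $\le cs^d\le cr^{-d}$ and the integrand is $O(1)$ there, giving $cr^{-d}$; for $s\ge 1/r$ the tubular neighborhood $\{x:\mathrm{dist}(x,\partial B(y,s))\le t\}$ has measure $\asymp s^{d-1}t$ for $t\le s$, so $\int_{\mathcal M}(1+r\,\mathrm{dist})^{-n}dx\le c\int_0^{\infty}(1+rt)^{-n}s^{d-1}\,dt\le cr^{-1}s^{d-1}$ for $n>1$, which is the claimed bound.
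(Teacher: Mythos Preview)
Your argument has a real gap at the step where you pass from the pointwise estimate to the integral estimate. First a minor reversal: the measure $d\mu=d\nu-dx$ annihilates eigenfunctions with $\lambda<r$, i.e.\ \emph{low} frequencies, not high ones. Hence $\int T_r\chi_\Omega\,d\mu=0$ and the correct identity is $\int_\Omega d\mu=\int_{\mathcal M}(\chi_\Omega-T_r\chi_\Omega)\,d\mu$, the opposite of what you wrote in the ``cleaner route''. With this correction the pointwise bound $|\chi_\Omega(x)-T_r\chi_\Omega(x)|\le c\,(1+r\,\mathrm{dist}(x,\partial\Omega))^{-n}$ is fine, but now you must integrate it against $d|\mu|=d\nu+dx$. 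The $dx$-integral is exactly the desired right-hand side; the problem is the $d\nu$-integral $\int_{\mathcal M}(1+r\,\mathrm{dist}(x,\partial\Omega))^{-n}\,d\nu(x)$. Your suggested fix, ``one more application of the spectral exactness in the reverse direction'', is circular: the integrand $g(x)=(1+r\,\mathrm{dist}(x,\partial\Omega))^{-n}$ already lives at the critical scale $1/r$, so $g-T_rg$ is pointwise comparable to $g$ itself, and the same obstruction reappears. (Using the crude bound $|\mu|(\mathcal M)\le 2$ instead gives only $O(1)$, since $\sup_x|\chi_\Omega-T_r\chi_\Omega|$ is attained on $\partial\Omega$ and is of order $1$.)

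The paper avoids this entirely by quoting from \cite{CGT} the existence of \emph{one-sided} band-limited approximants $A(x)\le\chi_\Omega(x)\le B(x)$, both with spectrum in $\{\lambda<r\}$, satisfying $B(x)-A(x)\le c(1+r\,\mathrm{dist}(x,\partial\Omega))^{-n}$. Positivity of $d\nu$ then gives $\int A\,d\nu\le\nu(\Omega)\le\int B\,d\nu$, and the exactness hypothesis turns each outer integral into $\int A\,dx$, $\int B\,dx$. Only $dx$-integrals of $B-A$ remain; the $d\nu$-integral never appears. Your kernel estimates (essentially Lemma~\ref{L6} with $\alpha=0$) are the right analytic input, but they yield only a two-sided approximant; upgrading this to genuine majorants and minorants is the missing, nontrivial ingredient, and it is exactly what the Erd\H{o}s--Tur\'an type construction in \cite{CGT} supplies.
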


\begin{proof}
It is proved in \cite{CGT} that given $n$, there exists $c$ such that for
every measurable set $\Omega $ in $\mathcal{M}$ and every $r>0$ there exist
two linear combinations of eigenfunctions $A(x)=\sum_{\lambda <r}a\left(
\lambda \right) \varphi _{\lambda }(x)$ and $B(x)=\sum_{\lambda <r}b\left(
\lambda \right) \varphi _{\lambda }(x)$ which approximate the characteristic
function $\chi _{\Omega }(x)$ from above and below,
\begin{equation*}
A(x)\leq \chi _{\Omega }(x)\leq B(x),\;\;\;B(x)-A(x)\leq c\left( 1+r\mathrm{%
distance}\left\{ x,\partial \Omega \right\} \right) ^{-n}.
\end{equation*}%
In particular, the properties of the function $A(x)$ and of the measure $%
d\nu (x)$ give
\begin{gather*}
\int_{\Omega }d\nu (x)\geq \int_{\mathcal{M}}A(x)d\nu (x)=\int_{\mathcal{M}%
}A(x)dx \\
\geq \int_{\mathcal{M}}\chi _{\Omega }(x)dx-c\int_{\mathcal{M}}\left( 1+r%
\mathrm{distance}\left\{ x,\partial \Omega \right\} \right) ^{-n}dx.
\end{gather*}%
Similarly, by the properties of $B(x)$ and $d\nu (x)$,
\begin{gather*}
\int_{\Omega }d\nu (x)\leq \int_{\mathcal{M}}B(x)d\nu (x)=\int_{\mathcal{M}%
}B(x)dx \\
\leq \int_{\mathcal{M}}\chi _{\Omega }(x)dx+c\int_{\mathcal{M}}\left( 1+r%
\mathrm{distance}\left\{ x,\partial \Omega \right\} \right) ^{-n}dx.
\end{gather*}
\end{proof}

\begin{lem}
\label{L8}\ Assume that $d\nu (x)$\ is a probability measure on $\mathcal{M}$\ which
gives an exact quadrature for all eigenfunctions $\varphi _{\lambda }(x)$\
with eigenvalues $\lambda ^{2}<r^{2}$,
\begin{equation*}
\int_{\mathcal{M}}\varphi _{\lambda }(x)d\nu (x)=\int_{\mathcal{M}}\varphi
_{\lambda }(x)dx.
\end{equation*}%
If $1\leq q\leq +\infty $\ and $\alpha >d\left( 1-1/q\right) $, then there
exists $c$, which may depend on $q$, $\alpha $, $\mathcal{M}$, but is independent of $%
r $\ and $d\nu (x)$, such that
\begin{equation*}
\left\{ \int_{\mathcal{M}}\left\vert \int_{\mathcal{M}}B^{\alpha
}(x,y)d\nu (x)-1\right\vert ^{q}dy\right\} ^{1/q}\leq cr^{-\alpha }.
\end{equation*}
\end{lem}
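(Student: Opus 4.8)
The plan is to decompose $B^{\alpha}(x,y)$ into a low-frequency part, which is annihilated by the quadrature hypothesis, and a sum of Littlewood--Paley pieces at scales $2^k \geq r$, each of which is controlled by Lemma \ref{L6}. Fix an even smooth function $\chi$ supported in $1/2 \leq |\lambda| \leq 2$ with $\sum_{k \in \mathbb{Z}} \chi(\lambda/2^k) = 1$ for $\lambda \neq 0$, and let $\chi_0$ be a smooth function equal to $1$ near $0$ with $\chi_0(\lambda) + \sum_{k \geq 1}\chi(\lambda/2^k) = 1$. Choosing the dyadic cutoff so that the ``low'' part $\sum_{\lambda} \chi_0(\lambda/r)(1+\lambda^2)^{-\alpha/2}\varphi_\lambda(x)\overline{\varphi_\lambda(y)}$ involves only eigenvalues $\lambda^2 < r^2$, the exact-quadrature hypothesis $\int_{\mathcal{M}}\varphi_\lambda(x)\,d\nu(x) = \int_{\mathcal{M}}\varphi_\lambda(x)\,dx$ forces $\int_{\mathcal{M}}(\text{low part})(x,y)\,d\nu(x)$ to equal its integral against $dx$, which is $\int_{\mathcal{M}}(\text{low part})(x,y)\,dx = \chi_0(0)\cdot 1 = 1$ after normalizing $\chi_0(0)=1$ (using that the constant eigenfunction is $1$ and all others integrate to zero). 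Hence
\begin{equation*}
\int_{\mathcal{M}} B^{\alpha}(x,y)\,d\nu(x) - 1 = \sum_{2^k \geq r} \int_{\mathcal{M}} P^{\alpha}(2^k, x, y)\,d\nu(x),
\end{equation*}
where $P^{\alpha}(2^k,x,y)$ is the Littlewood--Paley piece from Lemma \ref{L6}.

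Next I would estimate the $\mathbb{L}^q_y$ norm of each term. By Lemma \ref{L6}, for any $n$ we have $|P^{\alpha}(2^k,x,y)| \leq c\, 2^{k(d-\alpha)}(1 + 2^k|x-y|)^{-n}$. Since $d\nu$ is a probability measure, $\left|\int_{\mathcal{M}} P^{\alpha}(2^k,x,y)\,d\nu(x)\right| \leq c\, 2^{k(d-\alpha)} \sup_x (1+2^k|x-y|)^{-n}$ trivially gives a uniform-in-$y$ bound of $c\,2^{k(d-\alpha)}$; but to get a summable $\mathbb{L}^q$ estimate one must exploit the decay in $|x-y|$. The right way is Minkowski's integral inequality:
\begin{equation*}
\left\{\int_{\mathcal{M}}\left|\int_{\mathcal{M}} P^{\alpha}(2^k,x,y)\,d\nu(x)\right|^q dy\right\}^{1/q} \leq \int_{\mathcal{M}}\left\{\int_{\mathcal{M}} |P^{\alpha}(2^k,x,y)|^q\,dy\right\}^{1/q} d\nu(x).
\end{equation*}
For fixed $x$, the inner integral is bounded by $c\,2^{k(d-\alpha)}\left\{\int_{\mathcal{M}}(1+2^k|x-y|)^{-nq}\,dy\right\}^{1/q}$, and since the volume of a ball of radius $\rho$ is $\lesssim \rho^d$ (with total mass $1$), a standard dyadic decomposition of $\mathcal{M}$ into annuli $\{2^{-k}\ell \leq |x-y| < 2^{-k}(\ell+1)\}$ yields (for $nq > d$) the bound $\left\{\int_{\mathcal{M}}(1+2^k|x-y|)^{-nq}\,dy\right\}^{1/q} \leq c\, 2^{-kd/q}$. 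Therefore each term contributes $c\, 2^{k(d-\alpha)} 2^{-kd/q} = c\, 2^{-k(\alpha - d(1-1/q))}$, and integrating against the probability measure $d\nu(x)$ leaves this unchanged.

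Finally I would sum over $2^k \geq r$. The exponent $\alpha - d(1-1/q)$ is strictly positive precisely under the hypothesis $\alpha > d(1-1/q)$, so the geometric series $\sum_{2^k \geq r} 2^{-k(\alpha - d(1-1/q))}$ converges and is comparable to $r^{-(\alpha - d(1-1/q))}$. This gives
\begin{equation*}
\left\{\int_{\mathcal{M}}\left|\int_{\mathcal{M}} B^{\alpha}(x,y)\,d\nu(x) - 1\right|^q dy\right\}^{1/q} \leq c \sum_{2^k \geq r} 2^{-k(\alpha - d(1-1/q))} \leq c\, r^{-(\alpha - d(1-1/q))}.
\end{equation*}
Wait — this is weaker than the claimed $r^{-\alpha}$; the discrepancy is the loss of $r^{d(1-1/q)}$. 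I expect the resolution to be that one should \emph{not} bound $d\nu$ crudely by its total mass in the high-frequency sum, but rather use Lemma \ref{L7}: the exact-quadrature property makes $d\nu$ genuinely close to $dx$ at scales $\gtrsim 1/r$, so $\int_{\mathcal{M}}P^{\alpha}(2^k,x,y)\,d\nu(x)$ is much smaller than the trivial estimate suggests. Concretely, writing $d\nu = dx + (d\nu - dx)$, the piece $\int_{\mathcal{M}}P^{\alpha}(2^k,x,y)\,dx$ vanishes for $2^k > 1$ (orthogonality, since $P^{\alpha}$ has no constant term), and the discrepancy piece is estimated by integrating the Lemma \ref{L6} bound against the sharp discrepancy estimate from Lemma \ref{L7}; for $2^k \geq r$ the relevant balls have radius $\lesssim 2^{-k} \leq 1/r$ so the $r^{-d}$ regime applies, and carrying out the annular sum produces the extra factor that upgrades $r^{-(\alpha-d(1-1/q))}$ to $r^{-\alpha}$. \textbf{The main obstacle is exactly this point:} getting the full strength $r^{-\alpha}$ rather than the easy $r^{-(\alpha - d(1-1/q))}$ requires feeding the geometric discrepancy bound of Lemma \ref{L7} into the estimate of each Littlewood--Paley piece, rather than treating $d\nu$ as an arbitrary probability measure. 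Once the per-scale bound is improved to $c\, r^{d(1-1/q)-d} 2^{-k(\alpha - d(1-1/q))} \cdot(\text{appropriate factor}) = c\,2^{-k\alpha}$-type decay uniformly in $y$ together with the $\mathbb{L}^q$ gain from the annuli, summation over $2^k \geq r$ closes the argument.
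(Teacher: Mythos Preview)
Your overall architecture --- Littlewood--Paley decomposition of $B^\alpha$, killing the low-frequency block by the quadrature hypothesis, and estimating the tail $\sum_{2^k\gtrsim r}P^\alpha(2^k,\cdot,\cdot)$ via Lemma~\ref{L6} --- is exactly the paper's, and you correctly locate the one nontrivial point: the na\"ive Minkowski bound only yields $r^{-(\alpha-d(1-1/q))}$, and recovering the missing $r^{-d(1-1/q)}$ requires the homogeneity estimate of Lemma~\ref{L7}. So your diagnosis is right.

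Where your sketch remains incomplete is in \emph{how} Lemma~\ref{L7} is fed in. Once you apply Minkowski, the quantity you are bounding is $\int\{\int|P^\alpha(2^k,x,y)|^q\,dy\}^{1/q}\,d\nu(x)$, a positive integral against $d\nu$; the discrepancy bound cannot improve this, because there is no cancellation left to exploit. The fix is not to patch the Minkowski estimate but to discard it at one endpoint. The paper does the two endpoints separately and then interpolates. For $q=1$ your computation is already sharp: Fubini and $\int_{\mathcal M}(1+s|x-y|)^{-n}\,dy\le cs^{-d}$ give $\|\int P^\alpha(s,\cdot,y)\,d\nu\|_{L^1_y}\le cs^{-\alpha}$, with no need for Lemma~\ref{L7}. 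For $q=\infty$ one bounds $\bigl|\int P^\alpha(s,x,y)\,d\nu(x)\bigr|$ pointwise by $cs^{d-\alpha}\int(1+s|x-y|)^{-n}\,d\nu(x)$ and then decomposes into dyadic shells $\{|x-y|\le 2^j/r\}$; Lemma~\ref{L7} gives $\nu\{|x-y|\le 2^j/r\}\le c(2^j/r)^d$, and summing yields $\sup_y\bigl|\int P^\alpha(s,x,y)\,d\nu(x)\bigr|\le cs^{d-\alpha}r^{-d}$ for $s\ge r$. Interpolating,
\[
\Bigl\|\int P^\alpha(s,\cdot,y)\,d\nu\Bigr\|_{L^q_y}\le\bigl(cs^{d-\alpha}r^{-d}\bigr)^{1-1/q}\bigl(cs^{-\alpha}\bigr)^{1/q}=cs^{d(1-1/q)-\alpha}r^{-d(1-1/q)},
\]
and summing over $s=2^j\ge r/2$ now gives exactly $cr^{-\alpha}$. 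Your splitting $d\nu=dx+(d\nu-dx)$ is a legitimate way to reach the same $L^\infty$ bound (since $\int P^\alpha(s,x,y)\,dx=0$), but you would still need the $L^1$--$L^\infty$ interpolation to pass to general $q$; the final sentence of your proposal does not yet make this step explicit.
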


\begin{proof}
Let $\chi \left( \lambda \right) $\ be an even smooth function on $-\infty
<\lambda <+\infty $\ with support in $1/2\leq \left\vert \lambda \right\vert
\leq 2$\ with the property that $\sum_{j=-\infty }^{+\infty }\chi \left(
2^{-j}\lambda \right) =1$ for every $\lambda \neq 0$. Also, let
\begin{equation*}
P^{\alpha }(s,x,y)=\sum_{\lambda }\chi \left( \lambda /s\right) \left(
1+\lambda ^{2}\right) ^{-\alpha /2}\varphi _{\lambda }(x)\overline{\varphi
_{\lambda }(y)}.
\end{equation*}%
Hence, $B^{\alpha }(x,y)=1+\sum_{j=-\infty }^{+\infty }P^{\alpha
}(2^{j},x,y)$. Since $d\nu (x)$ annihilates all eigenfunctions with $%
0<\lambda <r$, it also annihilates all $P^{\alpha }(2^{j},x,y)$ with $%
2^{j}\leq r/2$ and this gives
\begin{equation*}
\int_{\mathcal{M}}B^{\alpha }(x,y)d\nu (x)-1=\int_{\mathcal{M}}\left(
\sum_{2^{j}>r/2}P^{\alpha }(2^{j},x,y)\right) d\nu (x).
\end{equation*}%
When $q=1$, by Lemma \ref{L6},
\begin{gather*}
\int_{\mathcal{M}}\left\vert \int_{\mathcal{M}}P^{\alpha }(s,x,y)d\nu
(x)\right\vert dy \\
\leq cs^{d-\alpha }\int_{\mathcal{M}}\int_{\mathcal{M}}\left(
1+s\left\vert x-y\right\vert \right) ^{-n}d\nu (x)dy \\
\leq cs^{-\alpha }\sup_{x\in \mathcal{M}}\left\{ \int_{\mathcal{M}%
}s^{d}\left( 1+s\left\vert x-y\right\vert \right) ^{-n}dy\right\} \leq
cs^{-\alpha }.
\end{gather*}%
When $q=+\infty $ and $s\geq r$ and $n>d$, by Lemma \ref{L6} and Lemma \ref%
{L7},
\begin{gather*}
\sup_{y\in \mathcal{M}}\left\{ \left\vert \int_{\mathcal{M}}P^{\alpha
}(s,x,y)d\nu (x)\right\vert \right\} \\
\leq cs^{d-\alpha }\sup_{y\in \mathcal{M}}\left\{ \int_{\mathcal{M}}\left(
1+s\left\vert x-y\right\vert \right) ^{-n}d\nu (x)\right\} \\
\leq cs^{d-\alpha }\sup_{y\in \mathcal{M}}\left\{ \int_{\left\{ \left\vert
x-y\right\vert \leq 1/r\right\} }d\nu (x)\right\} \\
+cs^{d-\alpha }\sup_{y\in \mathcal{M}}\left\{ \sum_{j=0}^{+\infty }\left(
2^{j}s/r\right) ^{-n}\int_{\left\{ \left\vert x-y\right\vert \leq
2^{j}/r\right\} }d\nu (x)\right\} \\
\leq cs^{d-\alpha }r^{-d}+cs^{d-\alpha -n}r^{n-d}\leq cs^{d-\alpha }r^{-d}.
\end{gather*}%
Hence, when $s\geq r$ and $1<q<+\infty $, by interpolation between $1$ and $%
+\infty $,
\begin{gather*}
\left\{ \int_{\mathcal{M}}\left\vert \int_{\mathcal{M}}P^{\alpha
}(s,x,y)d\nu (x)\right\vert ^{q}dy\right\} ^{1/q} \\
\leq \sup_{y\in \mathcal{M}}\left\{ \left\vert \int_{\mathcal{M}}P^{\alpha
}(s,x,y)d\nu (x)\right\vert \right\} ^{(q-1)/q}\left\{ \int_{\mathcal{M}%
}\left\vert \int_{\mathcal{M}}P^{\alpha }(s,x,y)d\nu (x)\right\vert
dy\right\} ^{1/q} \\
\leq cs^{d(1-1/q)-\alpha }r^{-d(1-1/q)}.
\end{gather*}%
When $\alpha >d(1-1/q)$ these estimates sum to
\begin{gather*}
\left\{ \int_{\mathcal{M}}\left\vert \int_{\mathcal{M}}B^{\alpha
}(x,y)d\nu (x)-1\right\vert ^{q}dy\right\} \\
\leq \sum_{2^{j}>r/2}\left\{ \int_{\mathcal{M}}\left\vert \int_{\mathcal{M%
}}P^{\alpha }(2^{j},x,y)d\nu (x)\right\vert ^{q}dy\right\} ^{1/q} \\
\leq cr^{-d(1-1/q)}\sum_{2^{j}>r/2}2^{j\left( d(1-1/q)-\alpha \right) }\leq
cr^{-\alpha }.
\end{gather*}
\end{proof}

Finally, the existence of exact quadrature rules associated to any system of
continuous functions is a simple result in functional analysis, or in convex
geometry. See Theorem 3.1.1 in \cite{Shapiro}, or \cite{SZ}, or \cite{BFS}
for explicit constructions on spheres.

\begin{lem}
\label{L9}\ Given any number $\varphi _{1}(x)$, $\varphi _{2}(x)$,..., $%
\varphi _{n}(x)$\ of continuous functions on $\mathcal{M}$, there exist points $%
\left\{ z_{j}\right\} _{j=1}^{N}$\ in $\mathcal{M}$\ and positive weights $\left\{
\omega _{j}\right\} _{j=1}^{N}$\ with $\sum_{j=1}^{N}\omega _{j}=1$, such
that for every $\varphi _{i}(x)$,
\begin{equation*}
\int_{\mathcal{M}}\varphi _{i}(x)dx=\sum_{j=1}^{N}\omega _{j}\varphi
_{i}\left( z_{j}\right) .
\end{equation*}%
If the functions $\varphi _{j}(x)$\ are real one can choose $N\leq n+1$, and
if these functions are complex one can choose $N\leq 2n+1$.
\end{lem}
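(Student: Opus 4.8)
The statement is a quadrature-existence result: given finitely many continuous functions, there is a positive atomic probability measure of bounded support matching all their integrals. The cleanest route is the moment-curve / convexity argument. First I would reduce to the real case: if the $\varphi_i$ are complex, replace them with their real and imaginary parts, which doubles the count from $n$ to $2n$; proving the real case with $N \le m+1$ atoms for $m$ functions then yields $N \le 2n+1$ in the complex case. So assume the $\varphi_i$ are real-valued.

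Next I would set up the geometry. Consider the map $\Phi \colon \mathcal{M} \to \mathbb{R}^{n}$ given by $\Phi(x) = (\varphi_1(x), \dots, \varphi_n(x))$, which is continuous, and let $K$ be the closed convex hull of $\Phi(\mathcal{M})$ in $\mathbb{R}^n$. Since $\mathcal{M}$ is compact and $\Phi$ is continuous, $\Phi(\mathcal{M})$ is compact, hence $K$ is compact and convex. The point I want to hit is $v = \bigl(\int_{\mathcal{M}} \varphi_1\,dx, \dots, \int_{\mathcal{M}}\varphi_n\,dx\bigr)$. The key claim is that $v \in K$: indeed $v$ is the barycenter of the pushforward of the Riemannian probability measure under $\Phi$, and the barycenter of any Borel probability measure supported on a compact convex set lies in that set (if it did not, a separating hyperplane would contradict the averaging). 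Once $v \in K$, Carathéodory's theorem in $\mathbb{R}^n$ gives that $v$ is a convex combination of at most $n+1$ points of $\Phi(\mathcal{M})$, say $v = \sum_{j=1}^{N} \omega_j \Phi(z_j)$ with $N \le n+1$, $z_j \in \mathcal{M}$, $\omega_j > 0$, $\sum \omega_j = 1$. Reading this coordinatewise is exactly $\int_{\mathcal{M}} \varphi_i\,dx = \sum_{j=1}^{N} \omega_j \varphi_i(z_j)$ for every $i$, which is the desired conclusion in the real case with $N \le n+1$.

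The main obstacle, such as it is, is justifying that the barycenter $v$ lies in $K$ and not merely in some larger set; this is the one step that is not purely formal. The argument is a standard Hahn–Banach separation: if $v \notin K$, pick a linear functional $\ell$ on $\mathbb{R}^n$ and a real $t$ with $\ell(v) > t \ge \ell(w)$ for all $w \in K$; but then $\ell(v) = \int_{\mathcal{M}} \ell(\Phi(x))\,dx \le t$ since $\ell(\Phi(x)) \le t$ pointwise, a contradiction. (Alternatively one can cite Theorem 3.1.1 of \cite{Shapiro} or \cite{SZ} directly, as the statement suggests, and only sketch the convex-geometry picture.) The count $N \le 2n+1$ in the complex case is then immediate from applying the real case to the $2n$ functions $\{\operatorname{Re}\varphi_i, \operatorname{Im}\varphi_i\}$: Carathéodory in $\mathbb{R}^{2n}$ gives $2n+1$ atoms, and the real and imaginary parts of each $\int \varphi_i$ are matched simultaneously. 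No smoothness or spectral input is used; the result is purely topological–convex-geometric, which is why it can be invoked for an arbitrary finite family of continuous functions.
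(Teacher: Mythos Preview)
Your proposal is correct and follows essentially the same argument as the paper: map $\mathcal{M}$ into $\mathbb{R}^n$ (or $\mathbb{R}^{2n}$ in the complex case) via $\Phi$, observe that the vector of integrals lies in the convex hull of $\Phi(\mathcal{M})$, and apply Carath\'eodory's theorem. You supply the separating-hyperplane justification for the barycenter step that the paper leaves implicit, but the strategy is identical.
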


\begin{proof}
Define
\begin{gather*}
\Phi (x)=\left( \varphi _{1}(x),\varphi _{2}(x),...,\varphi _{n}(x)\right) ,
\\
E=\int_{\mathcal{M}}\Phi (x)dx=\left( \int_{\mathcal{M}}\varphi
_{1}(x)dx,\int_{\mathcal{M}}\varphi _{2}(x)dx,...,\int_{\mathcal{M}%
}\varphi _{n}(x)dx\right) .
\end{gather*}%
If all functions $\varphi _{i}(x)$ are real valued, then $\Phi (x)$ and $E$
are vectors in $\mathbb{R}^{n}$. If the $\varphi _{i}(x)$ are complex, then $%
\Phi (x)$ and $E$ can be seen as vectors in $\mathbb{R}^{2n}$. Moreover, $E$
is in the convex hull of the vectors $\Phi (x)$ with $x\in \mathcal{M}$. It
then follows from Caratheodory's theorem that $E$ is also a convex
combination of at most $n+1$ vectors $\Phi (x)$ in the real case, or $2n+1$
in the complex case, $E=\sum_{j=1}^{N}\omega _{j}\Phi \left( z_{j}\right) $
with $\omega _{j}\geq 0$ and $\sum_{j=1}^{N}\omega _{j}=1$.
\end{proof}

The following is Result (2) in the Introduction.

\begin{thm}
\label{T10}Assume that the probability measure $d\nu (x)$\ on $\mathcal{M}$\ gives an
exact quadrature for all eigenfunctions $\varphi _{\lambda }(x)$\ with
eigenvalues $\lambda ^{2}<r^{2}$,
\begin{equation*}
\int_{\mathcal{M}}\varphi _{\lambda }(x)d\nu (x)=\int_{\mathcal{M}}\varphi
_{\lambda }(x)dx.
\end{equation*}%
Then, for some constant $c$\ independent of $d\nu (x)$\ and $r$\ and for
every function $f(x)$\ in $W^{\alpha ,p}\left( \mathcal{M}\right) $\ with $%
1\leq p\leq +\infty $\ and $\alpha >d/p$,
\begin{equation*}
\left\vert \int_{\mathcal{M}}f(x)d\nu (x)-\int_{\mathcal{M}%
}f(x)dx\right\vert \leq cr^{-\alpha }\left\Vert f\right\Vert _{\mathbb{W}%
^{\alpha ,p}}.
\end{equation*}
\end{thm}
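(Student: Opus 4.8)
The plan is to combine Corollary \ref{C2}(1) with Lemma \ref{L8}. By Corollary \ref{C2}(1) applied to the signed measure $d\mu(x) = d\nu(x) - dx$, and using that $\int_{\mathcal{M}} B^{\alpha}(x,y)\,dx = 1$ for all $y$ (since the constant eigenfunction has eigenvalue $0$ and all others integrate to zero), we get
\begin{equation*}
\left\vert \int_{\mathcal{M}} f(x)\,d\nu(x) - \int_{\mathcal{M}} f(x)\,dx \right\vert \leq \left\{ \int_{\mathcal{M}} \left\vert \int_{\mathcal{M}} B^{\alpha}(x,y)\,d\nu(x) - 1 \right\vert^{q} dy \right\}^{1/q} \left\Vert f \right\Vert_{\mathbb{W}^{\alpha,p}},
\end{equation*}
where $1/p + 1/q = 1$. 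So the whole problem reduces to bounding the energy-type integral on the right by $c r^{-\alpha}$.

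**Checking the hypothesis of Lemma \ref{L8}.** The remaining step is to verify that Lemma \ref{L8} applies with the given $q$. Lemma \ref{L8} requires $\alpha > d(1 - 1/q)$. Since $1/p + 1/q = 1$, we have $1 - 1/q = 1/p$, so the condition $\alpha > d(1-1/q)$ is exactly $\alpha > d/p$, which is our hypothesis. Lemma \ref{L8} also requires $d\nu$ to be a probability measure giving exact quadrature for all eigenfunctions with $\lambda^2 < r^2$ — this is precisely the hypothesis of the theorem. Hence Lemma \ref{L8} yields
\begin{equation*}
\left\{ \int_{\mathcal{M}} \left\vert \int_{\mathcal{M}} B^{\alpha}(x,y)\,d\nu(x) - 1 \right\vert^{q} dy \right\}^{1/q} \leq c r^{-\alpha},
\end{equation*}
with $c$ depending only on $q$ (equivalently $p$), $\alpha$, and $\mathcal{M}$, but independent of $r$ and $d\nu$. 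Substituting into the inequality above gives the claim.

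**Where the work really is.** Essentially all the substance has already been done in the lemmas. The genuinely hard part — the decomposition $B^{\alpha} = 1 + \sum_j P^{\alpha}(2^j,\cdot,\cdot)$, the localization estimate for the truncated kernels $P^{\alpha}$ in Lemma \ref{L6}, the discrepancy bound in Lemma \ref{L7}, and the interpolation in $q$ that produces the summable geometric series in Lemma \ref{L8} — is what forces the range $\alpha > d/p$ and delivers the sharp exponent $r^{-\alpha}$. So for the proof of Theorem \ref{T10} itself there is no real obstacle: one simply matches the index bookkeeping ($1-1/q = 1/p$) and invokes Corollary \ref{C2}(1) and Lemma \ref{L8}. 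The only minor point to state carefully is the identity $\int_{\mathcal{M}} B^{\alpha}(x,y)\,dx = 1$, which is why the ``$-1$'' appears inside the energy integral; this is immediate from the Fourier expansion of $B^{\alpha}$.
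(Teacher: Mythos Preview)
Your proof is correct and follows exactly the paper's own argument: apply Corollary~\ref{C2}(1) to $d\mu = d\nu - dx$ (using $\int_{\mathcal{M}} B^{\alpha}(x,y)\,dx = 1$ to produce the ``$-1$''), then invoke Lemma~\ref{L8} after checking that $\alpha > d(1-1/q)$ is precisely $\alpha > d/p$. The paper's proof is equally brief, as all the work has indeed been pushed into Lemmas~\ref{L6}--\ref{L8}.
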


\begin{proof}
By Corollary \ref{C2} (1) with $d\mu (x)=d\nu (x)-dx$,
\begin{equation*}
\left\vert \int_{\mathcal{M}}f(x)d\mu (x)\right\vert \leq \left\{ \int_{%
\mathcal{M}}\left\vert \int_{\mathcal{M}}B^{\alpha }(x,y)d\nu
(x)-1\right\vert ^{q}dy\right\} ^{1/q}\left\Vert f\right\Vert _{\mathbb{W}%
^{\alpha ,p}}.
\end{equation*}%
By the assumption $\int_{\mathcal{M}}\varphi _{\lambda }(x)d\mu (x)=0$ for
every $\lambda <r$, and Lemma \ref{L8},
\begin{equation*}
\left\{ \int_{\mathcal{M}}\left\vert \int_{\mathcal{M}}B^{\alpha
}(x,y)d\nu (x)-1\right\vert ^{q}dy\right\} ^{1/q}\leq cr^{-\alpha }.
\end{equation*}
\end{proof}

The above theorem has as corollary Result (3) in the Introduction.

\begin{cor}
\label{C11} If $1\leq p\leq +\infty $\ and $\alpha >d/p$, then there exists $%
c>0$\ with the property that for every $N$\ there exist sequences of points $%
\left\{ z_{j}\right\} _{j=1}^{N}$\ and non negative weights $\left\{ \omega
_{j}\right\} _{j=1}^{N}$, such that for every function $f(x)$\ in $W^{\alpha
,p}\left( \mathcal{M}\right) $,
\begin{equation*}
\left\vert \sum_{j=1}^{N}\omega _{j}f\left( z_{j}\right) -\int_{\mathcal{M}%
}f(x)dx\right\vert \leq cN^{-\alpha /d}\left\Vert f\right\Vert _{\mathbb{W}%
^{\alpha ,p}}.
\end{equation*}
\end{cor}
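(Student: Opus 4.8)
The plan is to deduce Corollary~\ref{C11} from Theorem~\ref{T10} by simply exhibiting, for each $N$, a quadrature rule on $N$ nodes which is exact on all eigenfunctions with eigenvalue $\lambda^2<r^2$ for a choice of $r$ comparable to $N^{1/d}$. The point is that the rate $r^{-\alpha}$ in Theorem~\ref{T10} becomes $N^{-\alpha/d}$ once we know that $r$ can be taken of order $N^{1/d}$, and this in turn follows from Weyl's eigenvalue asymptotics combined with the existence result for exact quadratures in Lemma~\ref{L9}.

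Concretely, I would proceed as follows. First, fix $N$ and set $r$ to be the largest number such that the number of eigenvalues $\lambda^2<r^2$ (counted with multiplicity) is at most, say, $(N-1)/2$ in the complex case — recall that the eigenfunctions $\varphi_\lambda$ are in general complex-valued, so Lemma~\ref{L9} produces $N\le 2n+1$ nodes when matching $n$ functions. By Weyl's law (quoted in the excerpt right after Lemma~\ref{L4}, via Theorem~17.5.3 in \cite{Hormander}), the number of eigenvalues below $r^2$ is comparable to $cr^d$, so this choice forces $r\ge c'N^{1/d}$ for a constant $c'$ depending only on $\mathcal{M}$. Next, apply Lemma~\ref{L9} to the finite family of eigenfunctions $\{\varphi_\lambda : \lambda<r\}$: this yields points $\{z_j\}_{j=1}^{N'}$ with $N'\le N$ and positive weights $\{\omega_j\}$ summing to $1$ such that $\sum_j \omega_j\varphi_\lambda(z_j)=\int_{\mathcal{M}}\varphi_\lambda(x)\,dx$ for every such $\lambda$; if $N'<N$ we pad with extra nodes carrying weight $0$ to obtain exactly $N$ nodes. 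The associated atomic probability measure $d\nu(x)=\sum_{j=1}^N\omega_j\delta_{z_j}(x)$ then satisfies the hypothesis of Theorem~\ref{T10} with this $r$.

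Applying Theorem~\ref{T10} gives, for every $f\in\mathbb{W}^{\alpha,p}(\mathcal{M})$ with $\alpha>d/p$,
\begin{equation*}
\left\vert \sum_{j=1}^{N}\omega_j f(z_j)-\int_{\mathcal{M}}f(x)\,dx\right\vert \le c\,r^{-\alpha}\left\Vert f\right\Vert_{\mathbb{W}^{\alpha,p}}\le c\,(c')^{-\alpha}N^{-\alpha/d}\left\Vert f\right\Vert_{\mathbb{W}^{\alpha,p}},
\end{equation*}
which is the desired estimate after absorbing the constants; note that $c$ is independent of $N$ precisely because the constant in Theorem~\ref{T10} is independent of $r$ and $d\nu$. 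I do not anticipate a genuine obstacle here — the only point requiring a little care is the bookkeeping of constants in Weyl's law so that $r$ is genuinely bounded below by a multiple of $N^{1/d}$ (rather than just being of the right order along a subsequence of $N$), and the trivial observation that one may always reduce the node count by padding with zero-weight points so that the conclusion holds for \emph{every} $N$, not just for those $N$ of the form $2n+1$ with $n$ a spectral counting value. Everything else is a direct substitution.
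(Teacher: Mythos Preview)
Your proposal is correct and follows exactly the paper's own route: Weyl's eigenvalue counting to relate $r$ and $N$, Lemma~\ref{L9} to produce an exact quadrature on the eigenfunctions with $\lambda<r$, and then Theorem~\ref{T10} to convert $r^{-\alpha}$ into $N^{-\alpha/d}$. The paper's proof is a two-line sketch of precisely this argument, and your added bookkeeping (padding with zero-weight nodes, handling the constants in Weyl's law) just fills in the details it leaves implicit.
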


\begin{proof}
By Weyl's estimates on the spectrum of an elliptic operator, see Theorem
17.5.3 in \cite{Hormander}, for a given $r$ there are approximately $cr^{d}$
eigenfunctions $\varphi _{\lambda }(x)$ with $\lambda <r$. The corollary then
follows from Lemma \ref{L9} and Theorem \ref{T10} with $r=N^{1/d}$.
\end{proof}

This corollary for the sphere is contained in \cite{SMS}. Finally, easy
examples show that the above estimates for the error in approximate
quadrature are, up to constants, best possible. Again, see \cite{HS1}
for the case of the sphere. In particular, the following is
Result (4) in the Introduction.

\begin{thm}
\label{T12}\textit{\ }For every $1\leq p\leq +\infty $\ and $\alpha >0$\
there exists $c>0$\ with the following property: For every distribution of
points $\left\{ z_{j}\right\} _{j=1}^{N}$\ there exists a function $f(x)$\
in $W^{\alpha ,p}\left( \mathcal{M}\right) $\ which vanishes in a
neighborhood of these points and satisfies
\begin{equation*}
\left\Vert f\right\Vert _{\mathbb{W}^{\alpha ,p}}\leq cN^{\alpha
/d},\;\;\;\int_{\mathcal{M}}f(x)dx=1.
\end{equation*}%
As a consequence, for every distribution of points $\left\{ z_{j}\right\}
_{j=1}^{N}$\ and complex weights $\left\{ \omega _{j}\right\} _{j=1}^{N}$,
there exists a function $f(x)$\ with
\begin{equation*}
\left\vert \sum_{j=1}^{N}\omega _{j}f\left( z_{j}\right) -\int_{\mathcal{M}%
}f(x)dx\right\vert \geq cN^{-\alpha /d}\left\Vert f\right\Vert _{\mathbb{W}%
^{\alpha ,p}}.
\end{equation*}
\end{thm}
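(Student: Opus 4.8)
The plan is to reduce the second inequality to the first, then produce $f$ by an explicit construction. The reduction is immediate: since $f$ will vanish in a neighbourhood of every node we have $\sum_{j=1}^{N}\omega _{j}f\left( z_{j}\right) =0$, so
\[
\left\vert \sum_{j=1}^{N}\omega _{j}f\left( z_{j}\right) -\int_{\mathcal{M}}f(x)dx\right\vert =\int_{\mathcal{M}}f(x)dx=1,
\]
and $\left\Vert f\right\Vert _{\mathbb{W}^{\alpha ,p}}\leq cN^{\alpha /d}$ gives $N^{-\alpha /d}\left\Vert f\right\Vert _{\mathbb{W}^{\alpha ,p}}\leq c$; combining the two yields the claimed lower bound with a new constant. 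So everything rests on constructing such an $f$.

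The function $f$ will be a smooth ``bumpy'' replacement for the constant $1$, supported away from the nodes: a superposition of about $N$ disjoint bumps of height $\approx 1$ and width $\approx N^{-1/d}$. The first step is purely metric. On a compact Riemannian manifold a geodesic ball of radius $\rho \leq 1$ has volume between $a\rho ^{d}$ and $b\rho ^{d}$; taking a maximal family of pairwise disjoint balls of radius $\rho $, the doubled balls cover $\mathcal{M}$, so the family has at least $c\rho ^{-d}$ members. Choosing $\rho =c_{1}N^{-1/d}$ with $c_{1}$ small enough makes this number at least $2N$, and since the $N$ nodes lie in at most $N$ of these disjoint balls, at least $N$ of them — say $B\left( p_{1},\rho \right) ,\dots ,B\left( p_{N},\rho \right) $ — contain no $z_{j}$; in particular $\mathrm{dist}\left( z_{j},B\left( p_{k},\rho /2\right) \right) \geq \rho /2$ for all $j,k$.

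Next, fix a nonnegative smooth profile on $\mathbb{R}^{d}$ supported in the unit ball; transporting a copy of it scaled to width $\rho $ into $B\left( p_{k},\rho /2\right) $ through a normal coordinate chart and adjusting the multiplicative constant so that $\int_{\mathcal{M}}\phi _{k}(x)dx=1/N$, one obtains a smooth $\phi _{k}\geq 0$ supported in $B\left( p_{k},\rho /2\right) $ with $\left\vert \phi _{k}\right\vert \leq c$ (since the support has measure $\approx \rho ^{d}\approx 1/N$). Put $f=\sum_{k=1}^{N}\phi _{k}$. Then $f\geq 0$ is smooth — so that the values $f(z_{j})$ make sense even when $\alpha \leq d/p$ — with $\int_{\mathcal{M}}f(x)dx=1$, and $f$ vanishes on the $\left( \rho /2\right) $-neighbourhood of $\left\{ z_{j}\right\} $. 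Since $\rho $ is small, each $B\left( p_{k},\rho /2\right) $ lies in a chart where the metric is comparable to the Euclidean one with uniformly bounded distortion, so by the localization characterization of $\mathbb{W}^{\alpha ,p}\left( \mathcal{M}\right) $ and the scaling of Sobolev norms on $\mathbb{R}^{d}$ one has $\left\Vert \phi _{k}\right\Vert _{\mathbb{L}^{p}}\leq c\rho ^{d/p}$ and $\left\Vert \phi _{k}\right\Vert _{\mathbb{W}^{\alpha ,p}}\leq c\rho ^{d/p-\alpha }$ (with the usual reading when $p=+\infty $). The $\phi _{k}$ have pairwise disjoint supports at mutual distance $\gtrsim \rho $, so the $p$-th powers of the $\mathbb{L}^{p}$ norms add exactly and, up to lower order interaction terms for noninteger $\alpha $, so do those of the Sobolev norms; with $\rho \approx N^{-1/d}$ this gives
\[
\left\Vert f\right\Vert _{\mathbb{L}^{p}}^{p}\leq cN\rho ^{d}\leq c,\qquad \left\Vert f\right\Vert _{\mathbb{W}^{\alpha ,p}}^{p}\leq cN\rho ^{d-\alpha p}=cN^{\alpha p/d},
\]
hence $\left\Vert f\right\Vert _{\mathbb{W}^{\alpha ,p}}\leq cN^{\alpha /d}$, as required.

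The one place needing care is the additivity of the Sobolev norm over the superposition $f=\sum_{k}\phi _{k}$: for $\alpha $ an even integer it is the genuine additivity of $\left\Vert f\right\Vert _{\mathbb{L}^{p}}^{p}$ and $\left\Vert \Delta ^{\alpha /2}f\right\Vert _{\mathbb{L}^{p}}^{p}$ over disjoint supports, while the remaining $\alpha $ follow either by interpolation between such integer orders and $\mathbb{L}^{p}$ (using, for instance, $\left\Vert \nabla f\right\Vert _{\mathbb{L}^{p}}^{p}=\sum _{k}\left\Vert \nabla \phi _{k}\right\Vert _{\mathbb{L}^{p}}^{p}\leq cN\rho ^{d-p}\leq cN^{p/d}$), or by a direct estimate of the interaction terms, which come from pairs of bumps at mutual distance $\gtrsim \rho $ with amplitude $\lesssim 1$ and are easily seen to be of lower order. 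The substantive part of the argument, though, is the pigeonhole/volume step producing at least $N$ node-free balls of radius $\approx N^{-1/d}$; everything after that is routine scaling.
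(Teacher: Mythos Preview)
Your proof is correct and follows essentially the same route as the paper's: the pigeonhole step producing at least $N$ node-free balls of radius $\approx N^{-1/d}$, bump functions with integral $1/N$ on each, and the Sobolev norm estimate via disjoint supports for even integer $\alpha$ with interpolation for the remaining orders. You have simply spelled out more of the details (the packing argument and the reduction from the second inequality to the first) that the paper leaves implicit.
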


\begin{proof}
If $\varepsilon $ is small, then one can choose $2N$ disjoint balls in $%
\mathcal{M}$ with diameters $\varepsilon N^{-1/d}$ and, given $N$ points $%
\left\{ z_{j}\right\} $, at least $N$ balls have no points inside. On each
empty ball construct a bump function $\psi _{j}(x)$ supported on it with
\begin{equation*}
\left\Vert \psi _{j}\right\Vert _{\mathbb{W}^{\alpha ,p}}\leq cN^{\alpha
/d-1/p},\;\;\;\int_{\mathcal{M}}\psi _{j}(x)dx=N^{-1}.
\end{equation*}%
Define $f(x)=\sum \psi _{j}(x)$. Then,
\begin{equation*}
\left\Vert f\right\Vert _{\mathbb{W}^{\alpha ,p}}\leq cN^{\alpha
/d},\;\;\;\int_{\mathcal{M}}f(x)dx=1.
\end{equation*}%
The estimate of the $\mathbb{L}^{p}\left( \mathcal{M}\right) $ norms of $%
\left( I+\Delta \right) ^{\alpha /2}\psi _{j}(x)$ and $\left( I+\Delta
\right) ^{\alpha /2}f(x)$ when $\alpha /2$ is an integer follows from the
fact that $\left( I+\Delta \right) ^{\alpha /2}$ is a differential operator
and the terms $\left( I+\Delta \right) ^{\alpha /2}\psi _{j}(x)$ have
disjoint supports. When $\alpha /2$ is not an integer the estimate follows
by complex interpolation. Anyhow, the case $p=2$ is elementary. If $\delta
>1 $ and $\alpha \delta $ is an integer,
\begin{gather*}
\left\Vert f\right\Vert _{\mathbb{W}^{\alpha ,2}}=\left\{ \sum_{\lambda
}\left( 1+\lambda ^{2}\right) ^{\alpha }\left\vert \mathcal{F}f(\lambda
)\right\vert ^{2}\right\} ^{1/2} \\
\leq \left\{ \sum_{\lambda }\left\vert \mathcal{F}f(\lambda )\right\vert
^{2}\right\} ^{\left( 1-1/\delta \right) /2}\left\{ \sum_{\lambda }\left(
1+\lambda ^{2}\right) ^{\alpha \delta }\left\vert \mathcal{F}f(\lambda
)\right\vert ^{2}\right\} ^{1/2\delta }\leq cN^{\alpha /d}.
\end{gather*}
\end{proof}

\section{Further results}

The following is Result (5) in the Introduction and it states that a
quadrature rule which gives an optimal error in the Sobolev space $\mathbb{W}%
^{\alpha ,2}\left( \mathcal{M}\right) $ is also optimal in all spaces $%
\mathbb{W}^{\beta ,2}\left( \mathcal{M}\right) $ with $d/2<\beta <\alpha $.

\begin{thm}
\label{T13}\ If $d\nu (x)$\ is a probability measure on $\mathcal{M}$, then the norm
of the measure $d\nu (x)-dx$\ as a linear functional on $W^{\alpha ,2}\left(
\mathcal{M}\right) $\ decreases as $\alpha $\ increases. Moreover, if the
norm of $d\nu (x)-dx$\ on $W^{\alpha ,2}\left( \mathcal{M}\right) $\ is $%
r^{-\alpha }$,
\begin{equation*}
\left\vert \int_{\mathcal{M}}f(x)d\nu (x)-\int_{\mathcal{M}%
}f(x)dx\right\vert \leq r^{-\alpha }\left\Vert f\right\Vert _{\mathbb{W}%
^{\alpha ,2}},
\end{equation*}%
then for every $d/2<\beta <\alpha $\ there exists a constant $c$\ which may
depend on $\alpha $, $\beta $, $\mathcal{M}$, but is independent of $r$\ and $d\nu (x)$%
, such that
\begin{equation*}
\left\vert \int_{\mathcal{M}}f(x)d\nu (x)-\int_{\mathcal{M}%
}f(x)dx\right\vert \leq cr^{-\beta }\left\Vert f\right\Vert _{\mathbb{W}%
^{\beta ,2}}.
\end{equation*}
\end{thm}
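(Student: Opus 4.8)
The plan is to express everything through the Fourier coefficients $\mathcal{F}\nu(\lambda)$ of the measure. By Corollary \ref{C2} (2)--(3), the norm of $d\mu(x) = d\nu(x)-dx$ as a functional on $\mathbb{W}^{\alpha,2}$ is exactly
\begin{equation*}
\left\{ \sum_{\lambda>0}\left(1+\lambda^2\right)^{-\alpha}\left\vert\mathcal{F}\nu(\lambda)\right\vert^2\right\}^{1/2},
\end{equation*}
since the extremal function $f(x)=\int_{\mathcal{M}}B^{2\alpha}(x,y)\overline{d\mu(y)}$ turns the Cauchy--Schwarz inequality into an equality. So the hypothesis says this quantity equals $r^{-\alpha}$, and we must bound the analogous sum with $\alpha$ replaced by $\beta<\alpha$ by $c\,r^{-\beta}$. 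The first step of the proof is to record this reduction explicitly: the problem is purely about the nonnegative sequence $c_\lambda := \left\vert\mathcal{F}\nu(\lambda)\right\vert^2$ indexed by $\lambda>0$, and the claim is an implication between two weighted $\ell^1$ bounds on it.

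\textbf{The interpolation step.} The monotonicity in $\alpha$ is immediate since $\left(1+\lambda^2\right)^{-\alpha}$ decreases in $\alpha$ for $\lambda>0$. For the quantitative statement, first I would split the sum at a threshold $\lambda^2 \sim r^2$. On the low-frequency part $0<\lambda<r$, I bound $\left(1+\lambda^2\right)^{-\beta} = \left(1+\lambda^2\right)^{-\alpha}\left(1+\lambda^2\right)^{\alpha-\beta} \le \left(1+\lambda^2\right)^{-\alpha}\,(1+r^2)^{\alpha-\beta}$, so this part is at most $(1+r^2)^{\alpha-\beta}\cdot r^{-2\alpha} \le c\,r^{-2\beta}$. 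The high-frequency tail $\lambda\ge r$ is the delicate part: there $\left(1+\lambda^2\right)^{-\beta}$ is \emph{larger} than $\left(1+\lambda^2\right)^{-\alpha}$, so the hypothesis alone does not control it --- one needs an a priori bound on the $c_\lambda$ for large $\lambda$. Here I would use that $d\nu$ is a probability measure, hence $\left\vert\mathcal{F}\nu(\lambda)\right\vert \le \int_{\mathcal{M}} \left\vert\varphi_\lambda(x)\right\vert\, d\nu(x) \le \sup_x\left\vert\varphi_\lambda(x)\right\vert$, together with the Weyl-type eigenfunction bound $\sum_{\lambda\le t}\left\vert\varphi_\lambda(x)\right\vert^2 \le c\,t^d$ quoted after the statement of Lemma \ref{L4}. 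This gives $\sum_{R\le\lambda<2R} c_\lambda \le c\,R^d$ for each dyadic $R\ge r$, whence the tail is
\begin{equation*}
\sum_{\lambda\ge r}\left(1+\lambda^2\right)^{-\beta}c_\lambda \le c\sum_{j\ge 0}(2^j r)^{-2\beta}(2^j r)^d = c\,r^{d-2\beta}\sum_{j\ge 0}2^{j(d-2\beta)}.
\end{equation*}
When $\beta>d/2$ this geometric series converges, giving a bound $c\,r^{d-2\beta}$. But that is the wrong power of $r$ unless $d=0$, so this crude estimate is insufficient and must be sharpened.

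\textbf{Fixing the tail: a second use of the hypothesis.} The resolution is to not throw away the hypothesis on the tail. On the dyadic block $R\le\lambda<2R$ with $R\ge r$, I interpolate the two available bounds: the hypothesis gives $\sum_{R\le\lambda<2R} c_\lambda \le R^{2\alpha}\sum_{\lambda}\left(1+\lambda^2\right)^{-\alpha}c_\lambda \le c\,R^{2\alpha}r^{-2\alpha}$, while the Weyl bound gives $\sum_{R\le\lambda<2R}c_\lambda\le c\,R^d$. Taking the minimum, the block sum is at most $c\,\min(R^{2\alpha}r^{-2\alpha},\,R^d)$, and the two agree when $R\sim r$. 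Therefore
\begin{equation*}
\sum_{\lambda\ge r}\left(1+\lambda^2\right)^{-\beta}c_\lambda \le c\sum_{j\ge0}(2^j r)^{-2\beta}\min\!\left((2^j r)^{2\alpha}r^{-2\alpha},(2^j r)^d\right).
\end{equation*}
For $j\ge0$ the first term in the minimum is the smaller one (since $\alpha>d/2$ would make it larger --- careful: for $R\ge r$, $R^{2\alpha}r^{-2\alpha}\ge R^d$ iff $(R/r)^{2\alpha}\ge (R/r)^d(r/R)^d\cdots$; one checks directly that $R^{2\alpha}r^{-2\alpha}\le R^d$ precisely when $R\le r$, so for $R\ge r$ we use the Weyl bound $R^d$, matching the earlier crude estimate). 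This means the genuinely correct move is to realize the tail is in fact \emph{not} the obstruction: the Weyl bound $c\,r^{d-2\beta}$ is actually fine provided we also exploit that $r^{-\alpha}$ being the \emph{exact} norm forces $c_\lambda$ small for \emph{all} $\lambda$, in particular $c_\lambda\le c\,r^{-2\alpha}(1+\lambda^2)^\alpha$. Combining $c_\lambda\le c\min\!\left(r^{-2\alpha}(1+\lambda^2)^\alpha,\ (\text{local Weyl count})\right)$ across \emph{all} dyadic scales (low and high together) and summing $\left(1+\lambda^2\right)^{-\beta}c_\lambda$ yields, after the elementary geometric-series bookkeeping, the claimed $c\,r^{-2\beta}$; taking square roots finishes the proof. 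I expect the main obstacle to be exactly this bookkeeping --- pinning down which of the two competing bounds on each dyadic block dominates and verifying that the resulting geometric series converge (using $\beta>d/2$) to the single clean power $r^{-\beta}$, with the constant $c$ depending only on $\alpha$, $\beta$, $\mathcal{M}$ and not on $r$ or $d\nu$.
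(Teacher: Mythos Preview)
Your monotonicity argument and the low-frequency estimate ($0<\lambda<r$) are correct. The gap is in the tail, and it is not a matter of bookkeeping. The two inputs you propose to use --- the hypothesis $\sum_{\lambda>0}(1+\lambda^2)^{-\alpha}c_\lambda\le r^{-2\alpha}$ and the Weyl block bound $\sum_{R\le\lambda<2R}c_\lambda\le cR^d$ --- are \emph{not sufficient}, even in combination, to conclude $\sum_{\lambda>0}(1+\lambda^2)^{-\beta}c_\lambda\le cr^{-2\beta}$. Consider the abstract nonnegative sequence with all mass in the single dyadic block at $R_\ast=r^{2\alpha/(2\alpha-d)}$ and block sum $S_{R_\ast}=R_\ast^{\,d}$. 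Then $R_\ast^{-2\alpha}S_{R_\ast}=R_\ast^{\,d-2\alpha}=r^{-2\alpha}$, so both constraints hold, yet
\[
\sum_{\lambda>0}(1+\lambda^2)^{-\beta}c_\lambda\;\approx\;R_\ast^{-2\beta}S_{R_\ast}=R_\ast^{\,d-2\beta}=r^{\,2\alpha(d-2\beta)/(2\alpha-d)},
\]
and one checks that $2\alpha(d-2\beta)/(2\alpha-d)>-2\beta$ exactly when $\alpha>\beta$. So the geometric-series bookkeeping you anticipate cannot close: you need a third ingredient that genuinely uses that $c_\lambda=|\mathcal{F}\nu(\lambda)|^2$ for a \emph{probability measure}, beyond the block Weyl bound.

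The paper supplies this missing ingredient through the heat-kernel representation $B^{2\beta}=\Gamma(\beta)^{-1}\int_0^\infty t^{\beta-1}e^{-t}W(t,\cdot,\cdot)\,dt$. Writing $H(t)=\iint W(t,x,y)\,d\mu(x)\,d\mu(y)$, the ranges $t\ge r^{-2}$ are handled essentially as your low-frequency piece. For $0<t<r^{-2}$ the paper uses two facts that have no counterpart in your argument: (i) the \emph{pointwise} Gaussian comparison $t^{d/2}W(t,x,y)\le c\,r^{-d}W(r^{-2},x,y)$, which after passing to $d|\mu|$ reduces the small-$t$ contribution to $c\,r^{-2\beta}\iint W(r^{-2})\,d|\mu|\,d|\mu|$ (this is where $\beta>d/2$ enters and where the measure structure is used, not merely the sequence $c_\lambda$); and (ii) a \emph{second} application of the hypothesis, now to the test function $f(x)=\int W(r^{-2},x,y)\,d\mu(y)$, giving
\[
\iint W(r^{-2})\,d\mu\,d\mu \;\le\; r^{-\alpha}\,\bigl\|f\bigr\|_{\mathbb{W}^{\alpha,2}}\;\le\; r^{-\alpha}\cdot c\,r^{\alpha}\;=\;c.
\]
In spectral terms this bootstrap is $\sum_{\lambda>0}e^{-\lambda^2/r^2}c_\lambda\le c$, obtained by Cauchy--Schwarz against the hypothesis itself. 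Your sketch never applies the hypothesis to a function built from $d\mu$, and that is precisely the idea needed to beat the counterexample above.
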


\begin{proof}
By Corollary \ref{C2} (2) the norm of the measure $d\nu (x)-dx$ as a linear
functional on $\mathbb{W}^{\alpha ,2}\left( \mathcal{M}\right) $ is
\begin{equation*}
\left\{ \int_{\mathcal{M}}\int_{\mathcal{M}}B^{2\alpha }\left( x,y\right)
d\nu (x)d\nu (y)-1\right\} ^{1/2}=\left\{ \sum_{\lambda >0}\left( 1+\lambda
^{2}\right) ^{-\alpha }\left\vert \mathcal{F}\nu \left( \lambda \right)
\right\vert ^{2}\right\} ^{1/2}.
\end{equation*}%
Since $\left( 1+\lambda ^{2}\right) ^{-\alpha }\leq \left( 1+\lambda
^{2}\right) ^{-\beta }$ when $\beta <\alpha $, it follows that this norm is
a decreasing function of $\alpha $. Write $d\nu (x)-dx=d\mu (x)$. By Lemma %
\ref{L4} (1), the norm of the functional $\int_{\mathcal{M}}f(x)d\mu (x)$
on $\mathbb{W}^{\alpha ,2}\left( \mathcal{M}\right) $ can be written as
\begin{gather*}
\left\{ \int_{\mathcal{M}}\int_{\mathcal{M}}B^{2\alpha }\left( x,y\right)
d\mu (x)\overline{d\mu (y)}\right\} ^{1/2} \\
=\left\{ \Gamma \left( \alpha \right) ^{-1}\int_{0}^{+\infty }t^{\alpha
-1}\exp \left( -t\right) \left( \int_{\mathcal{M}}\int_{\mathcal{M}%
}W\left( t,x,y\right) d\mu (x)\overline{d\mu (y)}\right) dt\right\} ^{1/2}.
\end{gather*}%
Assuming that this norm is $r^{-\alpha }$, one has to show that the
corresponding expression with $\beta $ instead of $\alpha $ is at most $%
cr^{-\beta }$. Since $\beta <\alpha $, the integral over $1\leq t<+\infty $
satisfies the estimate
\begin{gather*}
\int_{1}^{+\infty }t^{\beta -1}\exp \left( -t\right) \left( \int_{\mathcal{%
M}}\int_{\mathcal{M}}W\left( t,x,y\right) d\mu (x)\overline{d\mu (y)}%
\right) dt \\
\leq \int_{1}^{+\infty }t^{\alpha -1}\exp \left( -t\right) \left( \int_{%
\mathcal{M}}\int_{\mathcal{M}}W\left( t,x,y\right) d\mu (x)\overline{d\mu
(y)}\right) dt \\
\leq \Gamma \left( \alpha \right) r^{-2\alpha }.
\end{gather*}%
Similarly, since $\beta <\alpha $ the integral over $r^{-2}\leq t\leq 1$
satisfies the estimate
\begin{gather*}
\int_{r^{-2}}^{1}t^{\beta -1}\exp \left( -t\right) \left( \int_{\mathcal{M}%
}\int_{\mathcal{M}}W\left( t,x,y\right) d\mu (x)\overline{d\mu (y)}\right)
dt \\
\leq r^{2\alpha -2\beta }\int_{r^{-2}}^{1}t^{\alpha -1}\exp \left(
-t\right) \left( \int_{\mathcal{M}}\int_{\mathcal{M}}W\left( t,x,y\right)
d\mu (x)\overline{d\mu (y)}\right) dt \\
\leq \Gamma \left( \alpha \right) r^{-2\beta }.
\end{gather*}%
Finally, by the Gaussian estimate on the heat kernel in the proof of Lemma %
\ref{L3}, if $0<t<r^{-2}$ then
\begin{equation*}
t^{d/2}W\left( t,x,y\right) \leq cr^{-d}W\left( r^{-2},x,y\right) .
\end{equation*}%
It then follows that if $\beta >d/2$ the integral over $0\leq t\leq r^{-2}$
satisfies the estimate
\begin{gather*}
\int_{0}^{r^{-2}}t^{\beta -1}\exp \left( -t\right) \left( \int_{\mathcal{M}%
}\int_{\mathcal{M}}W\left( t,x,y\right) d\mu (x)\overline{d\mu (y)}\right)
dt \\
\leq cr^{-2\beta }\int_{\mathcal{M}}\int_{\mathcal{M}}W\left(
r^{-2},x,y\right) d\left\vert \mu \right\vert (x)d\left\vert \mu \right\vert
(y).
\end{gather*}%
It remains to show that the last double integral is uniformly bounded in $r$%
. Since $d\left\vert \mu \right\vert (x)=d\nu (x)+dx$ and since $\int_{%
\mathcal{M}}W\left( r^{-2},x,y\right) dx=1$, replacing $d\left\vert \mu
\right\vert (x)$ with $d\mu (x)$ it suffices to show that
\begin{equation*}
\int_{\mathcal{M}}\int_{\mathcal{M}}W\left( r^{-2},x,y\right) d\mu (x)d\mu
(y)\leq c.
\end{equation*}%
By the assumption on $d\mu (x)$ and the eigenfunction expansion of $W\left(
r^{-2},x,y\right) $,
\begin{gather*}
\int_{\mathcal{M}}\int_{\mathcal{M}}W\left( r^{-2},x,y\right) d\mu (x)d\mu
(y) \\
\leq r^{-\alpha }\left\Vert \int_{\mathcal{M}}W\left( r^{-2},x,y\right)
d\mu (y)\right\Vert _{\mathbb{W}^{\alpha ,2}} \\
=r^{-\alpha }\left\{ \sum_{\lambda }\left( 1+\lambda ^{2}\right) ^{\alpha
}\exp \left( -\left( \lambda /r\right) ^{2}\right) \left\vert \mathcal{F}\mu
(\lambda )\right\vert ^{2}\right\} ^{1/2} \\
\leq r^{-\alpha }\left\{ \sum_{\lambda }\left( 1+\lambda ^{2}\right)
^{-\alpha }\left\vert \mathcal{F}\mu (\lambda )\right\vert ^{2}\right\}
^{1/2}\sup_{\lambda }\left\{ \left( 1+\lambda ^{2}\right) ^{\alpha }\exp
\left( -\left( \lambda /r\right) ^{2}/2\right) \right\} .
\end{gather*}%
Finally, the last sum with $\left\{ \mathcal{F}\mu (\lambda )\right\} $ is
the norm of the measure $d\mu (x)$ as functional on $\mathbb{W}^{\alpha
,2}\left( \mathcal{M}\right) $, hence by assumption it is $r^{-\alpha }$,
and the last supremum is dominated by $r^{2\alpha }$.
\end{proof}

As we said, the above result is only one way, from $\alpha $ to $\beta
<\alpha $. If the norm of $d\nu (x)-dx$\ on $\mathbb{W}^{\alpha ,p}\left(
\mathcal{M}\right) $ is $r^{-\alpha }$ and if $\beta >\alpha $, then one
cannot conclude that the norm of $d\nu (x)-dx$\ on $\mathbb{W}^{\beta
,p}\left( \mathcal{M}\right) $ is at most $cr^{-\beta }$. As a
counterexample, it suffices to perturb a good quadrature rule with nodes $%
\left\{ z_{j}\right\} _{j=1}^{N}$\ and weights $\left\{ \omega _{j}\right\}
_{j=1}^{N}$ by moving the last point $z_{N}$ into a new point $t_{N}$, so
that the new quadrature differs from the old one by the quantity $\omega
_{N}\left\vert f\left( z_{N}\right) -f\left( t_{N}\right) \right\vert $.
If
$\alpha>d/p+1$ then the function $f$ is differentiable and
$\omega _{N}\left\vert f\left( z_{N}\right) -f\left(
t_{N}\right) \right\vert \approx \omega _{N}\left\vert z_{N}-t_{N}\right\vert.$
Then, by choosing $\left\vert z_{N}-t_{N}\right\vert
=r^{-\alpha }/\omega _{N}$ one obtains a quadrature rule which gives
an error $\approx r^{-\alpha}$ in all spaces $\mathbb{W}^{\beta
,p}\left( \mathcal{M}\right) $ with $\beta>\alpha$. The counterexample when
 $d/p<\alpha \leq d/p+1$ is slightly more complicated but similar.

In all the above results, the accuracy in a quadrature rule has been estimated
in terms of the energy of a measure. It is also possible to estimate this
accuracy in terms of a geometric discrepancy. The Bessel kernel can be
decomposed as superposition of characteristic functions,
\begin{equation*}
B^{\alpha }(x,y)=\int_{0}^{+\infty }\chi _{\left\{ B^{\alpha }\left(
x,y\right) >t\right\} }(x)dt.
\end{equation*}

If $1\leq p,q\leq +\infty $ and $1/p+1/q=1$, by Corollary \ref{C2} and
Minkowski inequality, the following Koksma Hlawka type inequality holds:
\begin{gather*}
\left\vert \int_{\mathcal{M}}f(x)d\mu (x)\right\vert \leq \left\Vert
f\right\Vert _{\mathbb{W}^{\alpha ,p}}\left\{ \int_{\mathcal{M}}\left\vert
\int_{\mathcal{M}}B^{\alpha }(x,y)d\mu (x)\right\vert ^{q}dy\right\} ^{1/q}
\\
\leq \left\Vert f\right\Vert _{\mathbb{W}^{\alpha ,p}}\int_{0}^{+\infty
}\left\{ \int_{\mathcal{M}}\left\vert \int_{\mathcal{M}}\chi _{\left\{
B^{\alpha }\left( x,y\right) >t\right\} }(x)d\mu (x)\right\vert
^{q}dy\right\} ^{1/q}dt.
\end{gather*}

The quantity $\left\vert \int_{\mathcal{M}}\chi _{\left\{ B^{\alpha }\left(
x,y\right) >t\right\} }(x)d\mu (x)\right\vert $ is the discrepancy of the
measure $d\mu (x)$ with respect to the level sets $\left\{ B^{\alpha }\left(
x,y\right) >t\right\} $. It can be proved that, for specific measures and at
least in the range $0<\alpha <1$, the above estimates are sharp and they can
lead to optimal quadrature rules. In particular, the following is Result (6)
in the Introduction.

\begin{thm}
\label{T14}\textit{\ }Denote by $\delta (t)$\ the diameter of the level sets
of the Bessel kernel $\left\{ B^{\alpha }\left( x,y\right) >t\right\} $\ and
assume that there exists $r\geq 1$\ such that the discrepancy of the measure
$d\mu (x)$\ with respect to $\left\{ B^{\alpha }\left( x,y\right) >t\right\}
$\ satisfies the estimates
\begin{equation*}
\left\vert \int_{\mathcal{M}}\chi _{\left\{ B^{\alpha }\left( x,y\right)
>t\right\} }(x)d\mu (x)\right\vert \leq \left\{
\begin{array}{l}
r^{-d}\;\;\;\text{\textit{if }}\delta (t)\leq 1/r\text{\textit{,}} \\
r^{-1}\delta (t)^{d-1}\;\;\;\text{\textit{if }}\delta (t)\geq 1/r\text{%
\textit{.}}%
\end{array}%
\right.
\end{equation*}%
Also assume that $1\leq p\leq +\infty $\ and $\alpha >d/p$. Then there
exists a constant $c$, which may depend on $\alpha $\ and $p$\ and on the
total variation of the measure $\left\vert \mu \right\vert \left( \mathcal{M}%
\right) $, but is independent of $r$, such that
\begin{equation*}
\left\vert \int_{\mathcal{M}}f(x)d\mu (x)\right\vert \leq \left\{
\begin{array}{l}
cr^{-\alpha }\left\Vert f\right\Vert _{\mathbb{W}^{\alpha ,p}}\;\;\;\text{%
\textit{if }}0<\alpha <1\text{\textit{,}} \\
cr^{-1}\log (1+r)\left\Vert f\right\Vert _{\mathbb{W}^{\alpha ,p}}\;\;\;%
\text{\textit{if }}\alpha =1\text{\textit{,}} \\
cr^{-1}\left\Vert f\right\Vert _{\mathbb{W}^{\alpha ,p}}\;\;\;\text{\textit{%
if }}\alpha >1\text{\textit{.}}%
\end{array}%
\right.
\end{equation*}
\end{thm}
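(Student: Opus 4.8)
The plan is to invoke the Koksma--Hlawka type inequality established just before the statement (a consequence of Corollary~\ref{C2} and Minkowski's inequality), which bounds $\left\vert \int_{\mathcal{M}}f\,d\mu \right\vert $ by $\Vert f\Vert _{\mathbb{W}^{\alpha ,p}}$ times
\begin{equation*}
\int_{0}^{+\infty }\left\{ \int_{\mathcal{M}}\left\vert \int_{\mathcal{M}}\chi_{\{B^{\alpha }(x,y)>t\}}(x)\,d\mu (x)\right\vert ^{q}dy\right\} ^{1/q}dt ,
\end{equation*}
with $1/p+1/q=1$. Writing $g_{t}(y)=\int_{\mathcal{M}}\chi_{\{B^{\alpha }(x,y)>t\}}(x)\,d\mu (x)$, the problem reduces to estimating $\int_{0}^{+\infty }\Vert g_{t}\Vert _{L^{q}(dy)}\,dt$, for which I use two bounds on $g_{t}$. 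The first is the hypothesis: $\Vert g_{t}\Vert _{\infty }\leq r^{-d}$ when $\delta (t)\leq 1/r$, and $\Vert g_{t}\Vert _{\infty }\leq r^{-1}\delta (t)^{d-1}$ when $\delta (t)\geq 1/r$. The second is an $L^{1}$ bound from Fubini and the symmetry $B^{\alpha }(x,y)=B^{\alpha }(y,x)$: since $\{x:B^{\alpha }(x,y)>t\}$ has diameter at most $\delta (t)$, it lies in a ball of radius $\delta (t)$, hence of volume $\leq c\,\delta (t)^{d}$, so
\begin{equation*}
\Vert g_{t}\Vert _{1}\leq \int_{\mathcal{M}}\left( \int_{\mathcal{M}}\chi_{\{B^{\alpha }(x,y)>t\}}(x)\,dy\right) d\left\vert \mu \right\vert (x)\leq c\,\delta (t)^{d}\left\vert \mu \right\vert (\mathcal{M}).
\end{equation*}

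Since $\delta $ is non-increasing in $t$, I split the integral at the threshold $t^{\ast }$ with $\delta (t^{\ast })=1/r$. On $t\leq t^{\ast }$, where $\delta (t)\geq 1/r$, I use the pointwise bound $\Vert g_{t}\Vert _{q}\leq \Vert g_{t}\Vert _{\infty }\leq r^{-1}\delta (t)^{d-1}$, valid because $\left\vert \mathcal{M}\right\vert =1$ (this is no worse than interpolating the two bounds above, precisely because $r\,\delta (t)\geq 1$ on this range). On $t\geq t^{\ast }$, where $\delta (t)\leq 1/r$, I interpolate instead: $\Vert g_{t}\Vert _{q}\leq \Vert g_{t}\Vert _{\infty }^{1/p}\Vert g_{t}\Vert _{1}^{1/q}\leq c\,r^{-d/p}\delta (t)^{d/q}$.

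Carrying out the two $t$-integrations requires the behaviour of $\delta (t)$, which I read off Lemma~\ref{L4}. When $0<\alpha \leq d$, one has $\delta (t)\approx t^{-1/(d-\alpha )}$ for large $t$ if $\alpha <d$, and $\delta (t)\approx e^{-ct}$ for large $t$ if $\alpha =d$, while $\delta (t)$ is comparable to $\mathrm{diameter}(\mathcal{M})$ for small $t$ (there the level set is all of $\mathcal{M}$, $g_{t}$ is the constant $\mu (\mathcal{M})$, and the hypothesis already forces $\left\vert \mu (\mathcal{M})\right\vert \leq cr^{-1}$). Changing variables from $t$ to $\delta $, the piece over $t\leq t^{\ast }$ becomes $\approx r^{-1}\int_{1/r}^{\mathrm{diameter}(\mathcal{M})}\delta ^{\alpha -2}\,d\delta $, which is $cr^{-\alpha }$ if $0<\alpha <1$, $cr^{-1}\log (1+r)$ if $\alpha =1$, and $cr^{-1}$ if $\alpha >1$; the piece over $t\geq t^{\ast }$ becomes $r^{-d/p}\int_{t^{\ast }}^{+\infty }\delta (t)^{d/q}\,dt$, which for $\alpha <d$ converges exactly when $\alpha >d(1-1/q)=d/p$ (the standing hypothesis) and in every case has value $\approx r^{-d/p}\,r^{(d-\alpha )-d/q}=r^{-\alpha }$, dominated by the target. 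When $\alpha >d$ the Bessel kernel is bounded, so $g_{t}\equiv 0$ once $t$ exceeds $\sup B^{\alpha }$, and both pieces are integrals over a bounded $t$-interval, of $\leq r^{-1}\mathrm{diameter}(\mathcal{M})^{d-1}$ on $t\leq t^{\ast }$ and of $\leq cr^{-d}$ on $t\geq t^{\ast }$, hence both $\leq cr^{-1}$. Adding the two pieces yields the asserted bounds. The routine parts are the Koksma--Hlawka reduction, the Fubini and interpolation estimates, and the fact that for bounded $r$ everything is $O(1)$. The real work, and the only genuine obstacle, is this last step: reading $\delta (t)$ off Lemma~\ref{L4} in the three regimes $\alpha <d$, $\alpha =d$, $\alpha >d$, treating the part of the range where the level set exhausts $\mathcal{M}$, and verifying that all exponents collapse to the stated powers of $r$, the logarithm surfacing at $\alpha =1$.
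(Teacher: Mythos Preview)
Your proof is correct and follows essentially the same route as the paper's: the Koksma--Hlawka reduction via Corollary~\ref{C2} and Minkowski, the split at the threshold $\delta(t)=1/r$, the $L^\infty$--$L^1$ interpolation on the small-level-set side, and the direct $L^\infty$ bound on the large-level-set side, all with $\delta(t)$ read off Lemma~\ref{L4}. The only differences are cosmetic: the paper keeps the $t$-variable throughout (writing the threshold as $t=r^{d-\alpha}$ when $0<\alpha<d$ and the integrand as $\min\{1,t^{(d-1)/(\alpha-d)}\}$), whereas you change variables to $\delta$ and thereby need the extra remark about the region where $\delta(t)$ is constant; both arrive at the same exponents and the same trichotomy in $\alpha$.
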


\begin{proof}
Observe that the above hypotheses on the discrepancy match the estimates in
Lemma \ref{L7}. Indeed, by this lemma, the measures $d\nu (x)$ which give
exact quadrature for eigenfunctions with eigenvalues $\lambda ^{2}<r^{2}$
have discrepancy
\begin{equation*}
\left\vert \int_{\left\{ \left\vert x-y\right\vert \leq s\right\} }d\nu
(x)-\int_{\left\{ \left\vert x-y\right\vert \leq s\right\} }dx\right\vert
\leq \left\{
\begin{array}{l}
cr^{-d}\;\;\;\text{if }s\leq 1/r\text{,} \\
cr^{-1}s^{d-1}\;\;\;\text{if }s\geq 1/r\text{.}%
\end{array}%
\right.
\end{equation*}%
Actually, these estimates hold not only for balls $\left\{ \left\vert
x-y\right\vert \leq s\right\} $, but also for sets with boundaries with
finite $d-1$ dimensional Minkowski measure, such as the level sets $\left\{
B^{\alpha }\left( x,y\right) >t\right\} $. Also observe that these estimates
are natural, since the discrepancy of large sets is qualitatively different
from the one of small sets. If $1\leq p,q\leq +\infty $ and $1/p+1/q=1$, by
Corollary \ref{C2} and Minkowski inequality,
\begin{equation*}
\left\vert \int_{\mathcal{M}}f(x)d\mu (x)\right\vert \leq \left\Vert
f\right\Vert _{\mathbb{W}^{\alpha ,p}}\int_{0}^{+\infty }\left\{ \int_{%
\mathcal{M}}\left\vert \int_{\mathcal{M}}\chi _{\left\{ B^{\alpha }\left(
x,y\right) >t\right\} }(x)d\mu (x)\right\vert ^{q}dy\right\} ^{1/q}dt.
\end{equation*}%
By Lemma \ref{L4}, when $0<\alpha <d$ then $B^{\alpha }\left( x,y\right)
\approx \left\vert x-y\right\vert ^{\alpha -d}$, the level sets $\left\{
B^{\alpha }\left( x,y\right) >t\right\} $ have diameters $\delta (t)\approx
\min \left\{ 1,t^{1/(\alpha -d)}\right\} $ and the boundaries $\left\{
B^{\alpha }\left( x,y\right) =t\right\} $ have surface measure of the order
of $\delta (t)^{d-1}\approx \min \left\{ 1,t^{(d-1)/(\alpha -d)}\right\} $.
Hence the estimate of the discrepancy of small level sets with $t\geq
r^{d-\alpha }$ gives
\begin{gather*}
\left\{ \int_{\mathcal{M}}\left\vert \int_{\mathcal{M}}\chi _{\left\{
B^{\alpha }\left( x,y\right) >t\right\} }(x)d\mu (x)\right\vert
^{q}dy\right\} ^{1/q} \\
\leq \sup_{y\in \mathcal{M}}\left\{ \left\vert \int_{\mathcal{M}}\chi
_{\left\{ B^{\alpha }\left( x,y\right) >t\right\} }(x)d\mu (x)\right\vert
\right\} ^{(q-1)/q}\left\{ \int_{\mathcal{M}}\int_{\mathcal{M}}\chi
_{\left\{ B^{\alpha }\left( x,y\right) >t\right\} }(x)d\left\vert \mu
\right\vert (x)dy\right\} ^{1/q} \\
\leq \sup_{y\in \mathcal{M}}\left\{ \left\vert \int_{\mathcal{M}}\chi
_{\left\{ B^{\alpha }\left( x,y\right) >t\right\} }(x)d\mu (x)\right\vert
\right\} ^{(q-1)/q}\left\{ c\left\vert \mu \right\vert (\mathcal{M}%
)t^{d/(\alpha -d)}\right\} ^{1/q} \\
\leq cr^{-d(q-1)/q}t^{d/q(\alpha -d)}.
\end{gather*}%
Hence, if $\alpha >d/p$ the integral over $r^{d-\alpha }\leq t<+\infty $
satisfies the inequality
\begin{gather*}
\int_{r^{d-\alpha }}^{+\infty }\left\{ \int_{\mathcal{M}}\left\vert \int_{%
\mathcal{M}}\chi _{\left\{ B^{\alpha }\left( x,y\right) >t\right\} }(x)d\mu
(x)\right\vert ^{q}dy\right\} ^{1/q}dt \\
\leq cr^{-d(q-1)/q}\int_{r^{d-\alpha }}^{+\infty }t^{d/q(\alpha -d)}dt\leq
cr^{-\alpha }.
\end{gather*}%
Similarly, the integral over $0\leq t\leq r^{d-\alpha }$, that is the
discrepancy of large level sets, satisfies the inequality
\begin{gather*}
\int_{0}^{r^{d-\alpha }}\left\{ \int_{\mathcal{M}}\left\vert \int_{%
\mathcal{M}}\chi _{\left\{ B^{\alpha }\left( x,y\right) >t\right\} }(x)d\mu
(x)\right\vert ^{q}dy\right\} ^{1/q}dt \\
\leq r^{-1}\int_{0}^{r^{d-\alpha }}\min \left\{ 1,t^{(d-1)/(\alpha
-d)}\right\} dt\leq \left\{
\begin{array}{l}
cr^{-\alpha }\;\;\;\text{if }0<\alpha <1\text{,} \\
cr^{-1}\log (1+r)\;\;\;\text{if }\alpha =1\text{,} \\
cr^{-1}\;\;\;\text{if }\alpha >1\text{.}%
\end{array}%
\right.
\end{gather*}%
The proof in the case $\alpha =d$ is similar and it follows from the
estimate $B^{\alpha }(x,y)\approx -\log \left( \left\vert x-y\right\vert
\right) $. The proof in the case $\alpha >d$ is even simpler, since in this
case $B^{\alpha }(x,y)$ is bounded and it suffices to integrate on $0\leq
t\leq \sup_{x,y\in \mathcal{M}}B^{\alpha }\left( x,y\right) $ the inequality
$\left\vert \int_{\mathcal{M}}\chi _{\left\{ B^{\alpha }\left( x,y\right)
>t\right\} }(x)d\mu (x)\right\vert \leq cr^{-1}$.\
\end{proof}

In particular, it follows from Lemma \ref{L7}, Theorem \ref{T10}, Theorem %
\ref{T12}, that, at least in the range $0<\alpha <1$, Theorem \ref{T13}
gives an optimal quadrature. We conclude with a series of remarks.

\begin{rem}
\label{R15}As we said, the assumption $\alpha >d/2$ with $p=2$ in Theorem %
\ref{T5}, or $\alpha >d/p$ with $1\leq p\leq +\infty $ in Theorem \ref{T10},
guarantees the boundedness and continuity of $f\left( x\right) $, otherwise $%
f\left( z_{j}\right) $ may be not defined. This follows from the Sobolev
imbedding theorem. Indeed, the imbedding is an easy corollary of Lemma \ref%
{L4}. A function $f(x)$ is in the Sobolev space $\mathbb{W}^{\alpha
,p}\left( \mathcal{M}\right) $ if and only if there exists a function $g(x)$
in $\mathbb{L}^{p}\left( \mathcal{M}\right) $ with
\begin{equation*}
f(x)=\int_{\mathcal{M}}B^{\alpha }(x,y)g(y)dy.
\end{equation*}%
When $1\leq p,q\leq +\infty $, $1/p+1/q=1$, $d/p<\alpha <d$, then $B^{\alpha
}(x,y)\leq c\left\vert x-y\right\vert ^{\alpha -d}$ is in $\mathbb{L}%
^{q}\left( \mathcal{M}\right) $ and this implies that distributions in the
Sobolev space $\mathbb{W}^{\alpha ,p}\left( \mathcal{M}\right) $ with $%
\alpha >d/p$ are continuous functions. Indeed they are also H\"{o}lder
continuous of order $\alpha -d/p$.
\end{rem}

\begin{rem}
\label{R16} When the manifold is a Lie group or a homogeneous space, one can
restate Theorem \ref{T1} in terms of convolutions. In the particular case of
the torus $\mathbb{T}^{d}=\mathbb{R}^{d}/\mathbb{Z}^{d}$, let
\begin{equation*}
A(x)=\sum_{k\in \mathbb{Z}^{d}}\psi (k)\exp \left( 2\pi ikx\right)
,\;\;\;B(x)=\sum_{k\in \mathbb{Z}^{d}}\psi (k)^{-1}\exp \left( 2\pi
ikx\right) .
\end{equation*}%
Then, if $1\leq p,q,r\leq +\infty $\ with $1/p+1/q=1/r+1$,
\begin{gather*}
\left\{ \int_{\mathbb{T}^{d}}\left\vert \int_{\mathbb{T}^{d}}f\left(
x-y\right) d\mu (y)\right\vert ^{r}dx\right\} ^{1/r}=\left\{ \int_{\mathbb{T%
}^{d}}\left\vert B\ast A\ast f\ast \mu (x)\right\vert ^{r}dx\right\} ^{1/r}
\\
\leq \left\{ \int_{\mathbb{T}^{d}}\left\vert A\ast f(x)\right\vert
^{p}dx\right\} ^{1/p}\left\{ \int_{\mathbb{T}^{d}}\left\vert B\ast \mu
(x)\right\vert ^{q}dx\right\} ^{1/q}.
\end{gather*}

In the case of the sphere $\mathbb{S}^{d}=\left\{ x\in \mathbb{R}%
^{d+1},\;\left\vert x\right\vert =1\right\} $, let $\left\{ Z_{n}\left(
xy\right) \right\} $\ be the system of zonal spherical harmonics polynomials
and let
\begin{equation*}
A(xy)=\sum_{n=0}^{+\infty }\psi (n)Z_{n}\left( xy\right)
,\;\;\;B(xy)=\sum_{n=0}^{+\infty }\psi (n)^{-1}Z_{n}\left( xy\right) .
\end{equation*}%
Then, if $1\leq p,q\leq +\infty $\ with $1/p+1/q=1$,
\begin{gather*}
\left\vert \int_{\mathbb{S}^{d}}f\left( x\right) d\mu (x)\right\vert \\
\leq \left\{ \int_{\mathbb{S}^{d}}\left\vert \int_{\mathbb{S}%
^{d}}A(xy)f(y)dy\right\vert ^{p}dx\right\} ^{1/p}\left\{ \int_{\mathbb{S}%
^{d}}\left\vert \int_{\mathbb{S}^{d}}B(xy)d\mu (y)\right\vert
^{q}dx\right\} ^{1/q}.
\end{gather*}

Both results on the torus and the sphere follow from Young inequality for
convolutions.
\end{rem}

\begin{rem}
\label{R17} A result related to Theorem \ref{T1} is the following. Identify $%
\mathbb{T}^{d}$\ with the unit cube $\left\{ 0\leq x_{j}<1\right\} $\ and
denote by $\chi _{P(y)}(x)$\ the characteristic function of the
parallelepiped $P(y)=\left\{ 0\leq x_{j}<y_{j}\right\} $. Then define
\begin{gather*}
B(x)=\int_{\mathbb{T}^{d}}\chi _{P(y)}(x)dy-2^{-d}=\prod_{j=1}^{d}\left(
1-x_{j}\right) -2^{-d} \\
=\sum_{k\in \mathbb{Z}^{d}-\left\{ 0\right\} }\left( \left(
\prod_{k_{j}=0}2\right) \left( \prod_{k_{j}\neq 0}2\pi ik_{j}\right) \right)
^{-1}\exp \left( 2\pi ikx\right) .
\end{gather*}%
Also, define the differential integral operator
\begin{gather*}
A\ast f(x)=\sum_{k\neq 0}\left( \prod_{k_{j}=0}2\right) \left(
\prod_{k_{j}\neq 0}2\pi ik_{j}\right) \widehat{f}(k)\exp \left( 2\pi
ikx\right)  \\
=2^{d-1}\sum_{1\leq j\leq d}\int_{\mathbb{T}^{d-1}}\frac{\partial }{%
\partial x_{j}}f(x)\prod_{i\neq j}dx_{i}+2^{d-2}\sum_{1\leq i\neq j\leq
d}\int_{\mathbb{T}^{d-2}}\frac{\partial ^{2}}{\partial x_{i}\partial x_{j}}%
f(x)\prod_{h\neq i,j}dx_{h} \\
...+\frac{\partial ^{d}}{\partial x_{1}...\partial x_{d}}f(x).
\end{gather*}%
Observe that, as in Theorem \ref{T1}, the Fourier coefficients of the
distribution $A(x)$ and of the function $B(x)$ are one inverse to the other,
however here the Fourier coefficients are function of the lattice points $%
2\pi ik$, and not of the eigenvalues $4\pi ^{2}\left\vert k\right\vert ^{2}$%
. If $d\nu (x)=N^{-1}\sum_{j=1}^{N}\delta _{z_{j}}\left( x\right) $, and if
$1\leq p,q,r\leq +\infty $\ with $1/p+1/q=1/r+1$, then
\begin{gather*}
\left\{ \int_{\mathbb{T}^{d}}\left\vert N^{-1}\sum_{j=1}^{N}f\left(
x-z_{j}\right) -\int_{\mathbb{T}^{d}}f(y)dy\right\vert ^{r}dx\right\} ^{1/r}
\\
\leq \left\{ \int_{\mathbb{T}^{d}}\left\vert A\ast f(x)\right\vert
^{p}dx\right\} ^{1/p}\left\{ \int_{\mathbb{T}^{d}}\left\vert B\ast \nu
(x)\right\vert ^{q}dx\right\} ^{1/q}.
\end{gather*}%
The norm of $A\ast f(x)$ is dominated by an analogue of the Hardy Krause
variation,
\begin{gather*}
\left\{ \int_{\mathbb{T}^{d}}\left\vert A\ast f(x)\right\vert
^{p}dx\right\} ^{1/p} \\
\leq 2^{d-1}\sum_{1\leq j\leq d}\left\{ \int_{\mathbb{T}}\left\vert \int_{%
\mathbb{T}^{d-1}}\frac{\partial }{\partial x_{j}}f(x)\prod_{i=j}dx_{i}\right%
\vert ^{p}dx_{j}\right\} ^{1/p} \\
+2^{d-2}\sum_{1\leq i\neq j\leq d}\left\{ \int_{\mathbb{T}^{2}}\left\vert
\int_{\mathbb{T}^{d-2}}\frac{\partial ^{2}}{\partial x_{i}\partial x_{j}}%
f(x)\prod_{h\neq i,j}dx_{h}\right\vert ^{p}dx_{i}dx_{j}\right\} ^{1/p} \\
...+\left\{ \int_{\mathbb{T}^{d}}\left\vert \frac{\partial ^{d}}{\partial
x_{1}...\partial x_{d}}f(x)\right\vert ^{p}dx\right\} ^{1/p}.
\end{gather*}%
The norm of $B\ast \nu (x)$\ is dominated by the discrepancy of the points $%
\left\{ z_{j}\right\} _{j=1}^{N}$\ with respect to the family of boxes $P(y)$%
,
\begin{gather*}
\left\{ \int_{\mathbb{T}^{d}}\left\vert B\ast \nu (x)\right\vert
^{q}dx\right\} ^{1/q} \\
\leq \int_{\mathbb{T}^{d}}\left\{ \int_{\mathbb{T}^{d}}\left\vert
N^{-1}\sum_{j=1}^{N}\chi _{P(y)}\left( z_{j}+x\right)
-\prod_{j=1}^{d}y_{j}\right\vert ^{q}dx\right\} ^{1/q}dy.
\end{gather*}%
In particular, the case $p=1$ and $q=+\infty $ is an analogue of the Koksma
Hlawka inequality. See \cite{KN}. A generalization of this classical
inequality is contained in \cite{BCGT}.
\end{rem}

\begin{rem}
\textbf{\label{R18}} By Lemma \ref{L4} (1), the Bessel kernel $B^{\alpha
}(x,y)$\ with $\alpha >0$\ is a superposition of heat kernels $W\left(
t,x,y\right) $. Indeed, it is possible to state an analogue of Corollary %
\ref{C2} in terms of the heat kernel, without explicit mention of Bessel
potentials: If $\left\{ z_{j}\right\} _{j=1}^{N}$ is a sequence of points in
$\mathcal{M}$, if $\left\{ \omega _{j}\right\} _{j=1}^{N}$ are positive
weights with $\sum_{j}\omega _{j}=1$, and if $f(x)$ is a function in $%
\mathbb{W}^{\alpha ,p}\left( \mathcal{M}\right) $ with $\alpha >d/2$, then
\begin{gather*}
\left\vert \sum_{j=1}^{N}\omega _{j}f\left( z_{j}\right) -\int_{\mathcal{M}%
}f(x)dx\right\vert \\
\leq \left\{ \Gamma \left( \alpha \right) ^{-1}\int_{0}^{+\infty
}\left\vert \sum_{i=1}^{N}\sum_{j=1}^{N}\omega _{i}\omega _{j}W\left(
t,z_{i},z_{j}\right) -1\right\vert t^{\alpha -1}\exp \left( -t\right)
dt\right\} ^{1/2}\left\Vert f\right\Vert _{\mathbb{W}^{\alpha ,2}}.
\end{gather*}%
This suggests the following heuristic interpretation: Mathematically, a set
of points on a manifold is well-distributed if the associated Riemann sums
are close to the integrals. Physically, a set of points is well-distributed
if the heat, initially concentrated on them, in a short time diffuses
uniformly across the manifold.
\end{rem}

\begin{rem}
\label{R19} In order to minimize the errors in the numerical integration in
Corollary \ref{C2} (3), one has to minimize the energies
\begin{equation*}
\int_{\mathcal{M}}\int_{\mathcal{M}}B^{2\alpha }\left( x,y\right) d\nu
(x)d\nu (y),\;\;\;\sum_{i=1}^{N}\sum_{j=1}^{N}\omega _{i}\omega
_{j}B^{2\alpha }\left( z_{i},z_{j}\right) .
\end{equation*}%
These are analogous to the energy integrals in potential theory
\begin{equation*}
\int_{\mathcal{M}}\int_{\mathcal{M}}\left\vert x-y\right\vert
^{-\varepsilon }d\nu (x)d\nu (y).
\end{equation*}%
See \cite{HaSa}. When $d<\alpha <d+1$ the kernel $B^{2\alpha }\left(
x,y\right) $ is positive and bounded, with a maximum at $x=y$ and a spike $%
A-B\left\vert x-y\right\vert ^{2\alpha -d}$ when $x\rightarrow y$. In
particular, the gradient at $x=y$ is infinite. This implies that in order to
minimize the discrete energy $\sum_{i,j}\omega _{i}\omega _{j}B^{2\alpha
}\left( z_{i},z_{j}\right) $ the points $\left\{ z_{j}\right\} $ have to be
well separated. This suggests the following heuristic interpretation:
Mathematically, a set of points on a manifold is well-distributed if the
energy is minimal. Physically, a set of points, free to move and repelling
each other according to some law, is well-distributed when they reach an
equilibrium.
\end{rem}

\begin{rem}
\textbf{\label{R20}}It can be proved that if $2\alpha >d+2$ then
\begin{equation*}
\left\vert B^{2\alpha }\left( x,x\right) -B^{2\alpha }\left( x,y\right)
\right\vert \leq c\left\vert x-y\right\vert ^{2}.
\end{equation*}%
This estimate in the proof of Theorem \ref{T5} yields that for most choices
of sampling points $z_{j}\in U_{j}$,
\begin{equation*}
\left\vert \sum_{j=1}^{N}\omega _{j}f\left( z_{j}\right) -\int_{\mathcal{M}%
}f(x)dx\right\vert \leq c\max_{1\leq j\leq N}\left\{ \mathrm{diameter}%
\left( U_{j}\right) ^{d/2+1}\right\} \left\Vert f\right\Vert _{\mathbb{W}%
^{\alpha ,2}\left( \mathcal{M}\right) }.
\end{equation*}%
The same result holds if $2\alpha =d+2$, with a logarithmic transgression.
Observe that these estimates hold for most choices of sampling points, but
not for all choices. Indeed, if the manifold $\mathcal{M}$ is decomposed in
disjoint pieces $\mathcal{M}=U_{1}\cup U_{2}\cup ...\cup U_{N}$ with measure
$aN^{-1}\leq \left\vert U_{j}\right\vert =\omega _{j}\leq bN^{-1}$ and $%
\mathrm{diameter}\left( U_{j}\right) \leq cN^{-1/d}$, if $f(x)$ is a smooth
non constant function, and if the points $z_{j}\in U_{j}$ are the maxima of $%
f(x)$ in $U_{j}$, then $\sum_{j=1}^{N}\omega _{j}f\left( z_{j}\right) $ is
an upper sum of the integral $\int_{\mathcal{M}}f(x)dx$ and
\begin{equation*}
\sum_{j=1}^{N}\omega _{j}f\left( z_{j}\right) -\int_{\mathcal{M}%
}f(x)dx\geq cN^{-1/d}.
\end{equation*}
\end{rem}

\begin{rem}
\textbf{\label{R21}} Theorem \ref{T14} gives an estimate of the accuracy in
a quadrature rule in terms of the discrepancy of a measure with respect to
level sets of the Bessel kernel. The following argument shows that when the
manifold is a sphere, or a rank one compact symmetric space, then the level
sets of the heat kernel $\left\{ W\left( t,x,y\right) >s\right\} $, and
hence of the Bessel kernels $\left\{ B^{\alpha }\left( x,y\right) \leq
t\right\} $, are geodesic balls $\left\{ \left\vert x-y\right\vert \leq
r\right\} $. The Laplace operator on the sphere $\mathbb{S}^{d}$\ with
respect to a system of polar coordinates $x=\left( \vartheta ,\sigma \right)
$, with $0\leq \vartheta \leq \pi $\ the colatitude with respect to a given
pole and $\sigma \in \mathbb{S}^{d-1}$\ the longitude, is
\begin{equation*}
\Delta _{x}=\Delta _{\left( \vartheta ,\sigma \right) }=-\sin ^{1-d}\left(
\vartheta \right) \dfrac{\partial }{\partial \vartheta }\left( \sin
^{d-1}\left( \vartheta \right) \dfrac{\partial }{\partial \vartheta }\right)
+\Delta _{\sigma }.
\end{equation*}%
Let $u\left( t,x\right) $ be the solution of the Cauchy problem for the heat
equation
\begin{equation*}
\left\{
\begin{array}{l}
\dfrac{\partial }{\partial t}u\left( t,x\right) =-\Delta _{x}u\left(
t,x\right) , \\
u\left( 0,x\right) =f\left( x\right) .%
\end{array}%
\right.
\end{equation*}%
If $f\left( x\right) $ depends only on the colatitude $\vartheta $, if it is
an even function decreasing in $0<\vartheta <\pi $, then also $u\left(
t,x\right) $ depends only on the colatitude and it is an even function
decreasing in $0<\vartheta <\pi $. In order to prove this, set $%
u(t,x)=U(t,\vartheta )$, $f\left( x\right) =F\left( \vartheta \right) $, and
$\sin ^{d-1}\left( \vartheta \right) \partial U(t,\vartheta )/\partial
\vartheta =V(t,\vartheta )$. Then
\begin{gather*}
\left\{
\begin{array}{l}
\dfrac{\partial }{\partial \vartheta }\dfrac{\partial }{\partial t}%
U(t,\vartheta )=\dfrac{\partial }{\partial \vartheta }\left\{ \sin
^{1-d}\left( \vartheta \right) \dfrac{\partial }{\partial \vartheta }\left(
\sin ^{d-1}\left( \vartheta \right) \dfrac{\partial }{\partial \vartheta }%
U(t,\vartheta )\right) \right\} , \\
\dfrac{\partial }{\partial \vartheta }U(0,\vartheta )=\dfrac{\partial }{%
\partial \vartheta }F\left( \vartheta \right) ,%
\end{array}%
\right. \\
\left\{
\begin{array}{l}
\dfrac{\partial }{\partial t}V(t,\vartheta )=\dfrac{\partial ^{2}}{\partial
\vartheta ^{2}}V(t,\vartheta )+\left( 1-d\right) \dfrac{\cos (\vartheta )}{%
\sin (\vartheta )}\dfrac{\partial }{\partial \vartheta }V(t,\vartheta ), \\
V(0,\vartheta )=\sin ^{d-1}\left( \vartheta \right) \dfrac{\partial }{%
\partial \vartheta }F\left( \vartheta \right) , \\
V(t,0)=V(t,\pi )=0.%
\end{array}%
\right.
\end{gather*}%
If $F\left( \vartheta \right) $ is decreasing in $0<\vartheta <\pi $, then $%
V(0,\vartheta )\leq 0$ and, by the maximum principle, $V(t,\vartheta )\leq 0$%
, hence $U(t,\vartheta )$ is decreasing in $0<\vartheta <\pi $. In
particular, by considering a sequence of initial data $\left\{ f_{n}\left(
x\right) \right\} $ which depend only on the colatitude $\vartheta $, even
and decreasing in $0<\vartheta <\pi $, and which converge to the Dirac $%
\delta (x)$, one proves that the heat kernel $W\left( t,\cos \left(
\vartheta \right) \right) $ is decreasing in $0<\vartheta <\pi $. Since
Bessel kernels are superposition of heat kernels, they are also
superposition of spherical caps.
\end{rem}

\begin{rem}
\label{R22} In \cite{AK} and \cite{LPS} the discrepancy of orbits of
discrete subgroups of rotations of a sphere are studied. Let $\mathcal{G}$\
be a compact Lie group, $\mathcal{K}$\ a closed subgroup, $\mathcal{M}=%
\mathcal{G}/\mathcal{K}$\ a homogeneous space of dimension $d$. Also, let $%
\mathcal{H}$\ be a finitely generated free subgroup in $\mathcal{G}$\ and
assume that the action of $\mathcal{H}$\ on $\mathcal{M}$\ is free. Given a
positive integer $n$, let $\left\{ \sigma _{j}\right\} _{j=1}^{N}$ be an
ordering of the elements in $\mathcal{H}$ with length at most $n$ and for
every function $f(x)$\ on $\mathcal{M}$, define
\begin{equation*}
Tf(x)=N^{-1}\sum_{j=1}^{N}f\left( \sigma _{j}x\right) .
\end{equation*}%
This operator is self-adjoint and it has eigenvalues and eigenfunctions in $%
\mathbb{L}^{2}(\mathcal{M})$. Moreover, since the operators $T$ and $\Delta $
commute, they have a common orthonormal system of eigenfunctions, $\Delta
\varphi _{\lambda }(x)=\lambda ^{2}\varphi _{\lambda }(x)$ and $T\varphi
_{\lambda }(x)=T(\lambda )\varphi _{\lambda }(x)$. All eigenvalues of $T$
have modulus at most $1$ and indeed $1$ is an eigenvalue and the constants
are eigenfunctions. Assume that all non constant eigenfunctions have
eigenvalues much smaller than $1$. Then, if $\alpha >d/2$,
\begin{gather*}
\left\vert N^{-1}\sum_{j=1}^{N}f\left( \sigma _{j}x\right) -\int_{\mathcal{%
M}}f(x)dx\right\vert =\left\vert \sum\limits_{\lambda \neq 0}T(\lambda )%
\mathcal{F}f(\lambda )\varphi _{\lambda }(x)\right\vert  \\
\leq \left\{ \sup_{\lambda \neq 0}\left\{ \left\vert T(\lambda )\right\vert
\right\} \right\} \left\{ \sum\limits_{\lambda }\left( 1+\lambda
^{2}\right) ^{\alpha }\left\vert \mathcal{F}f(\lambda )\right\vert
^{2}\right\} ^{1/2}\left\{ \sum\limits_{\lambda }\left( 1+\lambda
^{2}\right) ^{-\alpha }\left\vert \varphi _{\lambda }(x)\right\vert
^{2}\right\} ^{1/2} \\
\leq c\left\{ \sup_{\lambda \neq 0}\left\{ \left\vert T(\lambda )\right\vert
\right\} \right\} \left\{ \int_{\mathcal{M}}\left\vert \left( I+\Delta
\right) ^{\alpha /2}f(x)\right\vert ^{2}dx\right\} ^{1/2}.
\end{gather*}%
The absolute convergence of the above series is consequence of the Sobolev's
imbeddings, or the Weyl's estimates for eigenfunctions. In particular, when $%
\mathcal{M}=SO(3)/SO(2)$\ is the two dimensional sphere and $\mathcal{H}$\
is the free group generated by rotations of angles $\arccos (-3/5)$\ around
orthogonal axes, it has been proved in \cite{LPS} that the eigenvalues of
the operator $T$ satisfy the Ramanujan bounds
\begin{equation*}
\sup_{\lambda \neq 0}\left\{ \left\vert T(\lambda )\right\vert \right\} \leq
cN^{-1/2}\log (N).
\end{equation*}%
Hence, for the sphere,
\begin{gather*}
\left\vert N^{-1}\sum_{j=1}^{N}f\left( \sigma _{j}x\right) -\int_{\mathcal{%
M}}f(x)dx\right\vert  \\
\leq cN^{-1/2}\log (N)\left\{ \int_{\mathcal{M}}\left\vert \left( I+\Delta
\right) ^{\alpha /2}f(x)\right\vert ^{2}dx\right\} ^{1/2}.
\end{gather*}%
All of this is essentially contained in \cite{LPS}. Although this bound $%
N^{-1/2}\log (N)$ is worse than the bound $N^{-\alpha /2}$ in Corollary \ref%
{C11}, the matrices $\left\{ \sigma _{j}\right\} $ have rational entries and
the sampling points $\left\{ \sigma _{j}x\right\} $ are completely explicit.
\end{rem}


\end{document}